\newtheorem{theorem}{Theorem}[section]
\newtheorem{corollary}[theorem]{Corollary}
\newtheorem{proposition}[theorem]{Proposition}
\theoremstyle{definition}
\newtheorem{remark}[theorem]{Remark}
\newtheorem{definition}[theorem]{Definition}
\numberwithin{equation}{section}
\newcommand{\ignore}[1]{}
\newcommand{\bli}{\begin{list}{}{\labelwidth6mm\leftmargin8mm}}
\newcommand{\eli}{\end{list}}
\newcommand{\dint}{\;\mathrm{d}}
\newcommand{\fz}{\infty}
\def\ls{\lesssim}
\def\vz{\varphi}
\def\supp{{\mathop\mathrm{supp\,}\nolimits}}
\newcommand{\Ae}{{A}_{p_1,q_1}^{s_1}}
\newcommand{\Az}{{A}_{p_2,q_2}^{s_2}}
\newcommand{\bt}{{B}_{p,q}^{s,\tau}}
\newcommand{\bte}{{B}_{p_1,q_1}^{s_1,\tau_1}}
\newcommand{\btz}{{B}_{p_2,q_2}^{s_2,\tau_2}}
\newcommand{\at}{{A}_{p,q}^{s,\tau}}
\newcommand{\ate}{{A}_{p_1,q_1}^{s_1,\tau_1}}
\newcommand{\atz}{{A}_{p_2,q_2}^{s_2,\tau_2}}
\newcommand{\ft}{{F}_{p,q}^{s,\tau}}
\newcommand{\fte}{{F}_{p_1,q_1}^{s_1,\tau_1}}
\newcommand{\ftz}{{F}_{p_2,q_2}^{s_2,\tau_2}}
\newcommand{\MA}{\ensuremath{{\cal A}^{s}_{u,p,q}}}
\newcommand{\MAe}{\ensuremath{{\cal A}^{s_1}_{u_1,p_1,q_1}}}
\newcommand{\MAz}{\ensuremath{{\cal A}^{s_2}_{u_2,p_2,q_2}}}
\newcommand{\MB}{\ensuremath{{\cal N}^{s}_{u,p,q}}}
\newcommand{\MBe}{\ensuremath{{\cal N}^{s_1}_{u_1,p_1,q_1}}}
\newcommand{\MBz}{\ensuremath{{\cal N}^{s_2}_{u_2,p_2,q_2}}}
\newcommand{\MF}{\ensuremath{{\cal E}^{s}_{u,p,q}}}
\newcommand{\MFe}{\ensuremath{{\cal E}^{s_1}_{u_1,p_1,q_1}}}
\newcommand{\MFz}{\ensuremath{{\cal E}^{s_2}_{u_2,p_2,q_2}}}
\newcommand{\M}{\ensuremath{{\cal M}_{u,p}}}
\newcommand{\cM}{\ensuremath{{\mathcal M}}}
\newcommand{\SRn}{\mathcal{S}(\rn)}
\newcommand{\SpRn}{\mathcal{S}'(\rn)}
\newcommand{\bmo}{\mathrm{bmo}}
\newcommand{\Lloc}{L_1^{\mathrm{loc}}}
\newcommand{\B}{\ensuremath{B^s_{p,q}}}
\newcommand{\F}{\ensuremath{F^s_{p,q}}}
\newcommand{\A}{\ensuremath{A^s_{p,q}}}
\newcommand{\beq}{\begin{equation}}
\newcommand{\eeq}{\end{equation}}
\newcommand{\ve}{\varepsilon}
\newcommand{\id}{\ensuremath{\operatorname{id}}}
\newcommand{\real}{\ensuremath{{\mathbb R}}}
\newcommand{\rr}{{\real}}
\newcommand{\rn}{\ensuremath{{\real^d}}}
\newcommand{\zz}{\ensuremath{{\mathbb Z}}}
\newcommand{\zn}{\ensuremath{{\zz^d}}}
\newcommand{\nn}{\ensuremath{{\mathbb N}}}
\newcommand{\cs}{\ensuremath{\mathcal S}}
\newcommand{\critical}{\ensuremath{\gamma(\tau_1,\tau_2,p_1,p_2)}}
\newcommand{\ext}{\ensuremath{\mathop\mathrm{ext}\nolimits}}
\newcommand{\re}{\ensuremath{\mathop\mathrm{re}}}
\begin{document}

\title{Compact embeddings in Besov-type and Triebel-Lizorkin-type Spaces on bounded domains}

\author{Helena F. Gon\c{c}alves\footnotemark[1], Dorothee D. Haroske\footnotemark[1], and Leszek Skrzypczak\footnotemark[1]\ \footnotemark[2]}







\maketitle

\footnotetext[1]{All authors were partially supported by the German Research Foundation (DFG), Grant no. Ha 2794/8-1.}

\footnotetext[2]{The author was supported by National Science Center, Poland,  Grant no. 2013/10/A/ST1/00091.}

\begin{abstract} 
{We study embeddings of Besov-type and Triebel-Lizorkin-type spaces, $\id_\tau~:~\bte(\Omega)~\hookrightarrow~\btz(\Omega)$ and $\id_\tau : \fte(\Omega)  \hookrightarrow \ftz(\Omega) $, where $\Omega \subset \rn$ is a bounded domain, and obtain necessary and sufficient conditions for the compactness of $\id_\tau$. Moreover, we characterise its entropy and approximation numbers. Surprisingly, these results are completely obtained via embeddings and the application of the corresponding results for classical Besov and Triebel-Lizorkin spaces as well as for Besov-Morrey and Triebel-Lizorkin-Morrey spaces.} 
\end{abstract}

\section{Introduction}

{Usually called \textit{smoothness spaces of Morrey type} or, for short, \textit{smoothness Morrey spaces}, these function spaces are built upon Morrey spaces $\M(\rn)$, $0< p \le u < \infty$, and attracted some attention in the last decades, motivated firstly by possible applications. They include Besov-Morrey spaces $\MB(\rn)$, Triebel-Lizorkin-Morrey spaces $\MF(\rn)$, $0 < p \le u < \infty$, $0 < q \le \infty$, $s \in \real$, Besov-type spaces $\bt(\rn)$ and Triebel-Lizorkin-type spaces $\ft(\rn)$, $0 < p <\infty$, $0 < q \le \infty$, $\tau\ge 0$, $s \in \real$.

The classical Morrey spaces $\M$, $0< p \le u < \infty$ , were introduced by Morrey in \cite{Mor} and are part of a wider class of Morrey-Campanato spaces, cf. \cite{Pee}. They can be seen as a complement to $L_p$ spaces, since $\ensuremath{{\cal M}_{p,p}}(\rn)= L_p(\rn)$. 

The Besov-Morrey spaces $\MB(\rn)$ were introduced by Kozono and Yamazaki in \cite{KY} and used by them and later on by Mazzucato \cite{Maz} in the study of Navier-Stokes equations. In \cite{TX} Tang and Xu introduced the corresponding Triebel-Lizorkin-Morrey spaces $\MF(\rn)$, thanks to establishing the Morrey version of Fefferman-Stein vector-valued inequality. Some properties of these spaces including their wavelet characterisations were later described in the papers by Sawano \cite{Saw2,Saw1}, Sawano and Tanaka \cite{ST2,ST1} and Rosenthal \cite{MR-1}. Recently, some limiting embedding properties of these spaces were investigated in a series of papers \cite{hs12,hs12b,hs14, HaSk-krakow}.

Another class of generalisations, the Besov-type space $\bt(\rn)$ and the Triebel-Lizorkin-type space $\ft(\rn)$ were introduced in \cite{ysy}. Their homogeneous versions were originally investigated by  El Baraka in \cite{ElBaraka1,ElBaraka2, ElBaraka3} and by Yuan and Yang \cite{yy1,yy2}. There are also some applications in partial differential equations for spaces of type $\bt(\rn)$ and $\ft(\rn)$, such as (fractional) Navier-Stokes equations, cf. \cite{lxy}.

Although the above scales are defined in different ways, they share some properties and are related to each other by a number of embeddings and coincidences. For instance, they both include the classical spaces of type $\B(\rn)$ and $\F(\rn)$ as special cases. We refer to our papers mentioned above, to the recently published papers \cite{YHMSY, YHSY}, but in particular to the fine surveys \cite{s011,s011a} by Sickel.

There is still a third approach, due to Triebel, who introduced and studied in \cite{t13} local spaces and in \cite{t14} hybrid spaces, together with their use in heat equations and Navier-Stokes equations. However, since the hybrid spaces coincide with appropriately chosen spaces of type $\bt(\rn)$ or $\ft(\rn)$, respectively, cf. \cite{ysy2}, we do not have to deal with them separately now.

In this paper we investigate the compactness of the embeddings of the spaces $\bt(\Omega)$ and $\ft(\Omega)$, where $\Omega \subset \rn$ is a bounded smooth domain. In particular, our first goal is to find necessary and sufficient conditions for the compactness of the embeddings
\begin{equation}\label{e0.0}
\id_{\tau}: \ate(\Omega) \hookrightarrow \atz(\Omega),
\end{equation}
where $A = B$ or $A= F$, cf. Theorem \ref{comp-tau-u}. Here we prove that $\id_\tau$ is compact if, and only if, 
\begin{equation}\label{e0.1}
\frac{s_1-s_2}{d}>\max\left\{\left(\tau_2-\frac{1}{p_2}\right)_+ -\left(\tau_1-\frac{1}{p_1}\right)_+, \frac{1}{p_1}-\tau_1 - \min\left\{\frac{1}{p_2}-\tau_2, \frac{1}{p_2}(1-p_1\tau_1)_+\right\}\right\},
\end{equation}
where we use the notation $a_+:= \max\{a,0\}$. At this point, this work can be seen as a counterpart of the papers \cite{hs12b,hs14, HaSk-krakow}, where we studied the compactness of the corresponding embeddings of the spaces $\MB$ and $\MF$. 

Usually one would start by studying the continuity of such embeddings and later proceed to the compactness. Here we do it differently and start by dealing with the compactness. Our technique relies basically on embeddings. Since for compactness one always has strict inequalities, like condition \eqref{e0.1}, one can always have further embeddings in between the considered spaces. Therefore, we take advantage of the relations between this scale, the smoothness Morrey spaces $\MB$ and $\MF$ and the classical spaces of type $\B$ and $\F$, and use the corresponding results for these spaces to obtain our main result. 

Afterwards we qualify the compactness of $\id_\tau$ in \eqref{e0.0} by means of entropy and approximation numbers. In the recent works \cite{HaSk-krakow} and \cite{HaSk-morrey-comp}, we characterised entropy and approximation numbers of the embedding
\[
\id_{\mathcal{A}}: \MAe(\Omega) \hookrightarrow \MAz(\Omega),
\]
with $\mathcal{A}= \mathcal{N}$ or $\mathcal{A}= \mathcal{E}$. However, to the best of our knowledge, apart from a result obtained in \cite{YHMSY} for approximation numbers when the target space is $L_{\infty}$, nothing is known on this matter for embeddings between spaces of type $\at$. Here we contribute a little more to the development of this topic, establishing some partial counterparts of the results proved in \cite{HaSk-morrey-comp}.

This paper is organised as follows. In Section 2 we present and collect some basic facts about smoothness Morrey spaces, on $\rn$ and on bounded domains $\Omega \subset \rn$, and introduce the notions of entropy and approximation numbers. In Section 3 we are concerned with the compactness of the above-described embeddings of Besov-type and Triebel-Lizorkin-type spaces on bounded domains. We also prove an extension of the results obtained in \cite{hs12b} for the scale $\MB$ to the cases when $p_i=u_i=\infty$, $i=1,2$. Moreover, we collect some immediate consequences of the main result, when we consider particular source and/or target spaces. In Section 4 we end up by characterising entropy and approximation numbers of the embedding $\id_\tau$ in \eqref{e0.0}, collecting also some special cases. 

}

\section{Preliminaries}
First we fix some notation. By $\nn$ we denote the \emph{set of natural numbers},
by $\nn_0$ the set $\nn \cup \{0\}$,  and by $\zn$ the \emph{set of all lattice points
in $\rn$ having integer components}.
For $a\in\real$, let   
$a_+:=\max\{a,0\}$.
All unimportant positive constants will be denoted by $C$, occasionally with
subscripts. By the notation $A \ls B$, we mean that there exists a positive constant $C$ such that
 $A \le C \,B$, whereas  the symbol $A \sim B$ stands for $A \ls B \ls A$.
We denote by $B(x,r) :=  \{y\in \rn: |x-y|<r\}$ the ball centred at $x\in\rn$ with radius $r>0$, and $|\cdot|$ denotes the Lebesgue measure when applied to measurable subsets of $\rn$.

Given two (quasi-)Banach spaces $X$ and $Y$, we write $X\hookrightarrow Y$
if $X\subset Y$ and the natural embedding of $X$ into $Y$ is continuous.

\subsection{Smoothness spaces of Morrey type on $\rn$}
Let $\SRn$ be the set of all \emph{Schwartz functions} on $\rn$, endowed
with the usual topology,
and denote by $\SpRn$ its \emph{topological dual}, namely,
the space of all bounded linear functionals on $\SRn$
endowed with the weak $\ast$-topology.
For all $f\in \cs(\rn)$ or $\cs'(\rn)$, we
use $\widehat{f}$ to denote its \emph{Fourier transform}, and $f^\vee$ for its inverse.
Let $\mathcal{Q}$ be the collection of all \emph{dyadic cubes} in $\rn$, namely,
$
\mathcal{Q}:= \{Q_{j,k}:= 2^{-j}([0,1)^d+k):\ j\in\zz,\ k\in\zn\}.
$
The {symbol}  $\ell(Q)$ denotes
the side-length of the cube $Q$ and $j_Q:=-\log_2\ell(Q)$.

Let $\vz_0,$ $\vz\in\SRn$ be such that
\begin{equation}\label{e1.0}
\supp \widehat{\vz_0}\subset \{\xi\in\rn:\,|\xi|\le2\}\, , \qquad
|\widehat{\vz_0}(\xi)|\ge C\ \text{if}\ |\xi|\le 5/3
\end{equation}
and
\begin{equation}\label{e1.1}
\supp \widehat{\vz}\subset \{\xi\in\rn: 1/2\le|\xi|\le2\}\quad\text{and}\quad
|\widehat{\vz}(\xi)|\ge C\ \text{if}\  3/5\le|\xi|\le 5/3,
\end{equation}
where $C$ is a positive constant.
In what follows, for all $\vz\in\cs(\rn)$ and $j\in\nn$, $\vz_j(\cdot):=2^{jd}\vz(2^j\cdot)$.

\begin{definition}\label{d1}
Let $s\in\rr$, $\tau\in[0,\infty)$, $q \in(0,\fz]$ and $\vz_0$, $\vz\in\cs(\rn)$
be as in \eqref{e1.0} and \eqref{e1.1}, respectively.
\bli
\item[{\bfseries\upshape (i)}]
Let $p\in(0,\infty]$. The \emph{Besov-type space} $\bt(\rn)$ is defined to be the collection of all $f\in\SpRn$ such that
$$\|f \mid {\bt(\rn)}\|:=
\sup_{P\in\mathcal{Q}}\frac1{|P|^{\tau}}\left\{\sum_{j=\max\{j_P,0\}}^\fz\!\!
2^{js q}\left[\int\limits_P
|\vz_j\ast f(x)|^p\dint x\right]^{\frac{q}{p}}\right\}^{\frac1q}<\fz$$
with the usual modifications made in case of $p=\fz$ and/or $q=\fz$.
\item[{\bfseries\upshape (ii)}]
Let $p\in(0,\infty)$. The \emph{Triebel-Lizorkin-type space} $\ft(\rn)$ is defined to be the collection of all $f\in \SpRn$ such that
$$\|f \mid {\ft(\rn)}\|:=
\sup_{P\in\mathcal{Q}}\frac1{|P|^{\tau}}\left\{\int\limits_P\left[\sum_{j=\max\{j_P,0\}}^\fz\!\!
2^{js q}
|\vz_j\ast f(x)|^q\right]^{\frac{p}{q}}\dint x\right\}^{\frac1p}<\fz$$
with the usual modification made in case of $q=\fz$.
\eli
\end{definition}

\begin{remark}\label{Rem-Ftau}
 These spaces were introduced in \cite{ysy}. To some extent the scale of Nikol'skij-Besov type spaces ${\bt(\rn)}$ had already been studied in \cite{ElBaraka1,ElBaraka2, ElBaraka3}.  
\end{remark}

We shall collect some features of these spaces below, but introduce first another scale of smoothness spaces of Morrey type. Recall first that the \emph{Morrey space}
  $\M(\rn)$, $0<p\le u<\infty $, is defined to be the set of all
  locally $p$-integrable functions $f\in L_p^{\mathrm{loc}}(\rn)$  such that
$$
\|f \mid {\M(\rn)}\| :=\, \sup_{x\in \rn, R>0} R^{\frac{d}{u}-\frac{d}{p}}
\left[\int_{B(x,R)} |f(y)|^p \dint y \right]^{\frac{1}{p}}\, <\, \infty\, .
$$

\begin{remark}
The spaces $\M(\rn)$ are quasi-Banach spaces (Banach spaces for $p \ge 1$).
They originated from Morrey's study on PDE (see \cite{Mor}) and are part of the wider class of Morrey-Campanato spaces; cf. \cite{Pee}. They can be considered as a complement to $L_p$ spaces. As a matter of fact, $\cM_{p,p}(\rn) = L_p(\rn)$ with $p\in(0,\infty)$.
To extend this relation, we put  $\cM_{\infty,\infty}(\rn)  = L_\infty(\rn)$. One can easily see that $\M(\rn)=\{0\}$ for $u<p$, and that for  $0<p_2 \le p_1 \le u < \infty$,
\begin{equation} \label{LinM}
	L_u(\rn)= \cM_{u,u}(\rn) \hookrightarrow  \cM_{u,p_1}(\rn)\hookrightarrow  \cM_{u,p_2}(\rn).
\end{equation}
In an analogous way, one can define the spaces $\cM_{\infty,p}(\rn)$, $p\in(0, \infty)$, but using the Lebesgue differentiation theorem, one can easily prove  that
$\cM_{\infty, p}(\rn) = L_\infty(\rn)$.
\end{remark}

Next we recall the definition of the other scale of smoothness spaces of Morrey type we deal with in this paper.

\begin{definition}\label{d2.5}
Let $0 <p\leq  u<\infty$ or $p=u=\infty$. Let  $q\in(0,\infty]$, $s\in \real$ and $\vz_0$, $\vz\in\cs(\rn)$
be as in \eqref{e1.0} and \eqref{e1.1}, respectively.
\bli
\item[{\upshape\bfseries (i)}]
The  {\em Besov-Morrey   space}
  $\MB(\rn)$ is defined to be the set of all distributions $f\in \SpRn$ such that
\begin{align}\label{BM}
\big\|f\mid \MB(\rn)\big\|:=
\bigg[\sum_{j=0}^{\infty}2^{jsq}\big\| \varphi_j \ast f\mid
\M(\rn)\big\|^q \bigg]^{1/q} < \infty
\end{align}
with the usual modification made in case of $q=\fz$.
\item[{\upshape\bfseries  (ii)}]
Let $u\in(0,\fz)$. The  {\em Triebel-Lizorkin-Morrey  space} $\MF(\rn)$
is defined to be the set of all distributions $f\in   \SpRn$ such that
\begin{align}\label{FM}
\big\|f \mid \MF(\rn)\big\|:=\bigg\|\bigg[\sum_{j=0}^{\infty}2^{jsq} |
 (\varphi_j\ast f)(\cdot)|^q\bigg]^{1/q}
\mid \M(\rn)\bigg\| <\infty
\end{align}
with the usual modification made in case of $q=\fz$.
\eli
\end{definition}

\begin{remark}
Besov-Morrey spaces were introduced by Kozono and Yamazaki in
\cite{KY}. They studied semi-linear heat equations and Navier-Stokes
equations with initial data belonging to  Besov-Morrey spaces.  The
investigations were continued by Mazzucato \cite{Maz}, where one can find the
atomic decomposition of the spaces. The Triebel-Lizorkin-Morrey spaces
were later introduced by  Tang and Xu \cite{TX}. We follow the
ideas of Tang and Xu \cite{TX}, where a somewhat  different definition is proposed. The ideas were further developed by Sawano and Tanaka \cite{ST1,ST2,Saw1,Saw2}. The most systematic and general approach to the spaces of this type  can  be found in the monograph \cite{ysy} or in the survey papers by Sickel \cite{s011,s011a}. 
\end{remark}

\begin{remark}
Note that for $u=p$ or $\tau=0$ we re-obtain the usual  Besov and Triebel-Lizorkin spaces:
\begin{equation}
  {\cal N}^{s}_{p,p,q}(\rn) = \B(\rn) = B^{s,0}_{p,q}(\rn) \label{MB=B}
\end{equation}
and
\begin{equation}
{\cal E}^{s}_{p,p,q}(\rn) = \F(\rn) = F^{s,0}_{p,q}(\rn), \label{MF=F}
\end{equation}
where $\B(\rn)$ and $\F(\rn)$ denote the classical
Besov spaces and Triebel-Lizorkin spaces,
respectively.  There exists extensive literature on such spaces; we
refer, in particular, to the series of monographs \cite{T-F1,T-F2,t06} for a
comprehensive treatment.
\end{remark}

\noindent{\em Convention.}~We adopt the nowadays usual custom to write $\A$ instead of $\B$ or $\F$, $\at$ instead of $\bt$ or $\ft$, and $\MA$ instead of $\MB$ or $\MF$, respectively, when both scales of spaces are meant simultaneously in some context.\\

We collect some basic properties of the scales $\at(\rn)$ and $\MA(\rn)$. 
The  spaces $\at(\rn)$ and $\MA(\rn)$ are
independent of the particular choices of $\vz_0$, $\vz$ appearing in their definitions. They are quasi-Banach spaces
(Banach spaces for $p,\,q\geq 1$), and $\mathcal{S}(\rn) \hookrightarrow
\MA(\rn), \at(\rn) \hookrightarrow \mathcal{S}'(\rn)$. In case of $\tau<0$ or $u<p$ we have $\at(\rn)=\MA(\rn)=\{0\}$.\\
 
Next we recall some basic embeddings results needed in the sequel. We refer to the references given above. For the spaces $\at(\rn)$ it is known that
\begin{equation} \label{elem-0-t}
A^{s+\ve,\tau}_{p,r}(\rn) \hookrightarrow \at(\rn) \qquad\text{if}\quad \varepsilon\in(0,\fz), \quad r,\,q\in(0,\infty],
\end{equation}
and
\begin{equation} \label{elem-1-t}
{A}^{s,\tau}_{p,q_1}(\rn)  \hookrightarrow {A}^{s,\tau}_{p,q_2}(\rn)\quad\text{if} \quad q_1\le q_2,
\end{equation}
 as well as
\begin{equation}\label{elem-tau}
B^{s,\tau}_{p,\min\{p,q\}}(\rn)\, \hookrightarrow \, \ft(\rn)\, \hookrightarrow \, B^{s,\tau}_{p,\max\{p,q\}}(\rn),
\end{equation}
which directly extends the well-known classical case from $\tau=0$ to $\tau\in [0,\infty)$, $p\in(0,\fz)$, $q\in(0,\fz]$ and $s\in\real$. 
Moreover, it is known from \cite[Proposition~2.6]{ysy}
that
\begin{equation} \label{010319}
A^{s,\tau}_{p,q}(\rn) \hookrightarrow B^{s+d(\tau-\frac1p)}_{\fz,\fz}(\rn).
\end{equation}
 The following  remarkable feature was proved in \cite{yy02}.
\begin{proposition}\label{yy02}
Let $s\in\rr$, $\tau\in[0,\fz)$  and $p,\,q\in(0,\fz]$ (with $p<\infty$ in the $F$-case). If
either $\tau>\frac1p$ or $\tau=\frac1p$ and $q=\infty$, then $A^{s,\tau}_{p,q}(\rn) = B^{s+d(\tau-\frac1p)}_{\fz,\fz}(\rn)$.
\end{proposition}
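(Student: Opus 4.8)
Write $\sigma:=s+d(\tau-\frac1p)$ for brevity. One inclusion, namely $\at(\rn)\hookrightarrow B^\sigma_{\fz,\fz}(\rn)$, is exactly \eqref{010319}, so the task reduces to establishing the reverse embedding $B^\sigma_{\fz,\fz}(\rn)\hookrightarrow \at(\rn)$ with the corresponding norm estimate; equality of the spaces then follows. Since all spaces in play are subspaces of $\SpRn$ and are independent of the chosen resolution of unity, the plan is to fix one system $\vz_0,\vz$ and use it in both (quasi-)norms. For $f\in B^\sigma_{\fz,\fz}(\rn)$ the defining property reads $\|\vz_j\ast f\mid L_\fz(\rn)\|\le 2^{-j\sigma}\|f\mid B^\sigma_{\fz,\fz}(\rn)\|$ for every $j\in\no$, and this single pointwise bound is the only information about $f$ that I expect to need.

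Feeding this bound into the $\bt$-norm, I would estimate $\int_P|\vz_j\ast f|^p\le |P|\,2^{-j\sigma p}\|f\mid B^\sigma_{\fz,\fz}\|^p$ on each dyadic cube $P$ (in the case $p=\fz$ the $L_p(P)$-average is replaced by the supremum over $P$, bounded by the same quantity, and the factor $|P|^{1/p}$ below by $1$), so that the inner sum is controlled by $|P|^{q/p}\sum_{j\ge\max\{j_P,0\}}2^{j(s-\sigma)q}$. The crucial observation is that $s-\sigma=d(\frac1p-\tau)$, so the summand equals $2^{jd(\frac1p-\tau)q}$. If $\tau>\frac1p$ this is a convergent geometric series dominated by its first term $2^{\max\{j_P,0\}d(\frac1p-\tau)q}$; if $\tau=\frac1p$ every term equals $1$, which is precisely why the sum diverges for finite $q$ and one must pass to $q=\fz$, where the series becomes a supremum equal to $1$. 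This pins down the exact role of the hypothesis.

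What then remains is the arithmetic bookkeeping with the prefactor $|P|^{-\tau}$. Using $|P|=2^{-j_P d}$ I would split into small cubes ($j_P\ge0$), where $|P|^{\frac1p-\tau}2^{j_P d(\frac1p-\tau)}=1$ so the contribution is uniformly bounded, and large cubes ($j_P<0$), where $\max\{j_P,0\}=0$, the series starts at $j=0$, and $|P|^{\frac1p-\tau}\le1$ because $|P|>1$ and $\frac1p-\tau<0$. In both regimes the supremum over $P$ is bounded by a constant multiple of $\|f\mid B^\sigma_{\fz,\fz}\|$, which is the desired estimate. The $\ft$-case ($p<\fz$) is parallel: the pointwise bound $|\vz_j\ast f(x)|\le 2^{-j\sigma}\|f\mid B^\sigma_{\fz,\fz}\|$ renders the inner $\ell_q$-sum independent of $x$, after which the integration over $P$ and the factor $|P|^{-\tau}$ are handled by the very same computation.

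I expect the only genuinely delicate point to be the boundary case $\tau=\frac1p$, $q=\fz$, where convergence is lost in the naive geometric-series argument and finiteness is rescued solely by the supremum form of the $q=\fz$ (quasi-)norm; everything else is a routine, if careful, manipulation of the dyadic scaling exponents. No atomic or wavelet decomposition should be required, since the $L_\fz$ control on each frequency block already dominates the Morrey-type averages directly.
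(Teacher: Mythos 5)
Your argument is correct, but it is worth noting that the paper does not actually prove this proposition at all: it is quoted verbatim from the reference [yy02] (Yang--Yuan), where the identity is obtained within a broader study relating $\at(\rn)$ to generalized Carleson measure spaces, essentially via the $\varphi$-transform and the associated sequence spaces. What you supply instead is a direct, elementary verification of the nontrivial inclusion $B^{\sigma}_{\fz,\fz}(\rn)\hookrightarrow\at(\rn)$, $\sigma=s+d(\tau-\frac1p)$, straight from Definition~\ref{d1}, taking the converse inclusion from \eqref{010319} (itself only cited, from [ysy, Proposition~2.6]). Your computation checks out: the single bound $\|\vz_j\ast f\mid L_\fz\|\le 2^{-j\sigma}\|f\mid B^{\sigma}_{\fz,\fz}\|$ reduces everything to the scalar series $\sum_{j\ge\max\{j_P,0\}}2^{jd(\frac1p-\tau)q}$, which converges geometrically for $\tau>\frac1p$ and degenerates to a supremum equal to $1$ when $\tau=\frac1p$, $q=\fz$; the cancellation $|P|^{\frac1p-\tau}2^{j_Pd(\frac1p-\tau)}=1$ for $j_P\ge0$ and the sign of $\frac1p-\tau$ for $j_P<0$ then give a bound uniform in $P$, and the $F$-case collapses to the $B$-case because the pointwise bound makes the inner $\ell_q$-expression independent of $x$. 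The $p=\fz$ modification you indicate is also consistent. What your route buys is self-containedness and transparency about exactly where the hypothesis ``$\tau>\frac1p$, or $\tau=\frac1p$ and $q=\fz$'' enters; what it does not deliver (and does not need to here) is the sharper information in [yy02] about what happens at $\tau=\frac1p$ with $q<\fz$, where the identity fails. The only caveat is that the ``easy'' direction still rests on the cited embedding \eqref{010319}, so the proof is self-contained only modulo that reference.
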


As for the scale $\MA(\rn)$ the counterparts to \eqref{elem-0-t}--\eqref{elem-tau} read as
\begin{equation} \label{elem-0}
{\mathcal A}^{s+\varepsilon}_{u,p,r}(\rn)  \hookrightarrow
\MA(\rn)\qquad\text{if}\quad \varepsilon>0, \quad r\in(0,\infty],
\end{equation}
and 
\begin{equation} \label{elem-1}
{\cal A}^{s}_{u,p,q_1}(\rn)  \hookrightarrow {\cal A}^{s}_{u,p,q_2}(\rn) \qquad\text{if}\quad q_1\le q_2.
\end{equation}
However, there also exist some differences.
Sawano proved in \cite{Saw2} that, for $s\in\real$ and $0<p< u<\infty$,
\begin{equation}\label{elem}
	{\cal N}^s_{u,p,\min\{p,q\}}(\rn)\, \hookrightarrow \, \MF(\rn)\, \hookrightarrow \,{\cal N}^s_{u,p,\infty}(\rn),
\end{equation}
where, for the latter embedding, $r=\infty$ cannot be improved -- unlike
in case of $u=p$ (see \eqref{elem-tau} with $\tau=0$). More precisely,
\[
\MF(\rn)\hookrightarrow {\mathcal N}^s_{u,p,r}(\rn)\quad\text{if, and only if,}\quad r=\infty ~~ \text{or} ~~  u=p\ \text{and}\ r\ge \max\{p,\,q\}.
\]
On the other hand, Mazzucato has shown in \cite[Proposition~4.1]{Maz} that
\[
\mathcal{E}^0_{u,p,2}(\rn)=\M(\rn),\quad 1<p\leq u<\infty,
\]
in particular,
\begin{equation}\label{E-Lp}
\mathcal{E}^0_{p,p,2}(\rn)=L_p(\rn)=F^0_{p,2}(\rn),\quad p\in(1,\infty).
\end{equation}

\begin{remark}\label{N-Bt-spaces}
We obtained a lot more embedding results within the scales of spaces $\MA(\rn)$ and $\at(\rn)$, respectively, in \cite{hs12b,hs14,YHSY,YHMSY}, but will recall some of them in detail below as far as needed for our argument. We turn to the relation between the two scales of smoothness Morrey spaces. 
Let $s$, $u$, $p$ and $q$ be as in Definition~\ref{d2.5}
and $\tau\in[0,\fz)$.
It is known from \cite[Corollary~3.3, p. 64]{ysy} that
\begin{equation}
\MB(\rn) \hookrightarrow  \bt(\rn) \qquad \text{with}\qquad \tau=\frac{1}{p}- \frac{1}{u}.
\label{N-BT-emb}
\end{equation}
Moreover, the above embedding is proper if $\tau>0$ and $q<\infty$. If $\tau=0$ or $q=\infty$, then both spaces coincide with each other, in particular,
\begin{equation}
\mathcal{N}^{s}_{u,p,\infty}(\rn)  =  B^{s,\frac{1}{p}- \frac{1}{u}}_{p,\infty}(\rn).
\label{N-BT-equal}
\end{equation}
As for the $F$-spaces, if $0\le \tau <{1}/{p}$,
then
\begin{equation}\label{fte}
\ft(\rn)\, = \, \MF(\rn)\quad\text{with }\quad \tau =\frac{1}{p}-\frac{1}{u}\, ,\quad 0 < p\le u < \infty\, ;
\end{equation}
cf. \cite[Corollary~3.3, p.\,63]{ysy}. Moreover, if $p\in(0,\infty)$ and $q\in(0,\infty]$, then
\begin{equation}\label{ftbt}
F^{s,\, \frac{1}{p} }_{p\, ,\,q}(\rn) \, = \, F^{s}_{\infty,\,q}(\rn)\, = \, B^{s,\, \frac1q }_{q\, ,\,q}(\rn) \, ;
\end{equation}
cf. \cite[Propositions~3.4 and 3.5]{s011} and \cite[Remark 10]{s011a}.
\end{remark}

For later use we recall the definition of the space $\bmo(\rn)$, i.e., the local (non-homogeneous) space of functions of bounded mean oscillation, consisting of all locally integrable
functions $\ f\in \Lloc(\rn) $ satisfying that
\begin{equation*}
 \left\| f \right\|_{\bmo}:=
\sup_{|Q|\leq 1}\; \frac{1}{|Q|} \int\limits_Q |f(x)-f_Q| \dint x + \sup_{|Q|>
1}\; \frac{1}{|Q|} \int\limits_Q |f(x)| \dint x<\infty,
\end{equation*}
where $ Q $ appearing in the above definition runs over all cubes in $\rn$, and $ f_Q $ denotes the mean value of $ f $ with
respect to $ Q$, namely, $ f_Q := \frac{1}{|Q|} \;\int_Q f(x)\dint x$,
cf. \cite[2.2.2(viii)]{T-F1}. Hence the above result \eqref{ftbt} implies, in particular,
\begin{equation}\label{ft=bmo}
\bmo(\rn)= F^{0}_{\infty,2}(\rn)= F^{0, 1/p}_{p, 2}(\rn), \quad 0<p<\infty.
\end{equation}

\begin{remark}\label{T-hybrid}
In contrast to this approach, Triebel followed the original Morrey-Campanato ideas to develop local spaces $\mathcal{L}^r\A(\rn)$ in \cite{t13}, and so-called `hybrid' spaces $L^r\A(\rn)$ in \cite{t14}, where $0<p<\infty$, $0<q\leq\infty$, $s\in\real$, and $-\frac{d}{p}\leq r<\infty$. This construction is based on wavelet decompositions and also combines local and global elements as in Definitions~\ref{d1} and \ref{d2.5}. However, Triebel proved in \cite[Chapter~3]{t14} that
\begin{equation} \label{hybrid=tau}
L^r\A(\rn) = \at(\rn), \qquad \tau=\frac1p+\frac{r}{d},
\end{equation}
in all admitted cases. Therefore we do not have to deal with these spaces separately in the sequel.
\end{remark}

\ignore{\begin{proposition}\label{MorreyintoC}
\bli
\item[{\hfill\bfseries\upshape (i)\hfill}]
Let $s\in \rr$, $\tau\in(0,\fz)$ and $p,\,q\in(0,\fz]$ (with $p<\infty$ in the case of $F$-spaces). The embedding
$$
\at(\rn)\hookrightarrow L_{\infty}(\rn)
$$
holds if, and only if,
$$
s>d\left(\frac{1}{p}-\tau\right).
$$
\item[{\hfill\bfseries\upshape (ii)\hfill}]
 Let $s\in\rr$, $0<p < u <\infty$ and $q\in(0,\fz]$. The embedding
\begin{equation*}
\MB(\rn) \hookrightarrow L_{\infty}(\rn)\,
\end{equation*}
holds  if, and only if,
\begin{equation*}
\begin{cases}
q\in(0,\infty], & \text{if}\quad s>\frac{d}{u}, \\ q\in(0,1], & \text{if} \quad s=\frac{d}{u}.
\end{cases}
\end{equation*}
\item[{\hfill\bfseries\upshape (iii)\hfill}]
 Let $s\in\rr$, $0<p < u <\infty$ and $q\in(0,\fz]$. The embedding
\begin{equation*}
\MF(\rn) \hookrightarrow L_{\infty}(\rn)\,
\end{equation*}
holds  if, and only if,
\begin{equation*}
s> \frac{d}{u}.
\end{equation*}
\eli
\end{proposition}

Part (i) of the above proposition coincides with \cite[Proposition 4.1]{YHMSY} and covers \cite[Proposition 5.4, Corollary 5.5]{YHSY}, whereas parts (ii) and (iii) can be found in \cite[Proposition 5.5]{hs12b} and  \cite[Proposition 3.8]{hs14}.\\
}

\subsection{Spaces on domains}

We assume that  $\Omega$ is a bounded $C^{\infty}$ domain in $\rn$.  We consider smoothness Morrey spaces on $\Omega$ defined by restriction. Let ${\cal D}(\Omega)$ be the set of all infinitely differentiable functions supported in $\Omega$ and denote by ${\cal D}'(\Omega)$ its dual.
  Since we are able to define the extension operator $\ext: {\cal D}(\Omega) \rightarrow  \SRn$, cf.
\cite{Saw2010}, the restriction operator $\re :   \SpRn \rightarrow {\cal D}'(\Omega)$ can be defined naturally as an adjoint operator
\[
\langle \re  (f), \varphi\rangle= \langle f, \ext (\varphi)\rangle, \quad f\in\SpRn,
\]
where $\varphi\in \mathcal{D}(\Omega)$. We will write $f\vert_{\Omega}={\rm re } (f)$.


\begin{definition}\label{D-spaces-Omega}
Let $s\in\real$ and $q\in (0,\infty]$.
\bli
\item[{\hfill\bfseries\upshape (i)\hfill}]
Let $0 <p\leq  u<\infty$ or $p=u=\infty$ (with $u<\infty$ in case of ${\cal A}={\cal E}$). Then
$\MA(\Omega)$ is defined by
\[
\MA(\Omega):=\big\{f\in {\cal D}'(\Omega): f=g\vert_{\Omega} \text{ for some } g\in \MA(\rn)\big\}
\]
endowed with the quasi-norm
\[
\big\|f\mid \MA(\Omega)\big\|:= \inf \big\{ \|g\mid \MA(\rn)\|:  f=g\vert_{\Omega}, \; g\in  \MA(\rn)\big\}.
\]
\item[{\hfill\bfseries\upshape (ii)\hfill}]
Let $\tau\in [0,\infty)$ and $p\in (0,\infty]$ (with $p<\infty$ in case of $\at=\ft$). Then
$\at(\Omega)$ is defined by
\[
\at(\Omega):=\big\{f\in {\cal D}'(\Omega): f=g\vert_{\Omega} \text{ for some } g\in \at(\rn)\big\}
\]
endowed with the quasi-norm
\[
\big\|f\mid \at(\Omega)\big\|:= \inf \big\{ \|g\mid \at(\rn)\|:  f=g\vert_{\Omega}, \; g\in  \at(\rn)\big\}.
\]
\eli
\end{definition}

\begin{remark}\label{tau-onOmega}
The spaces $\MA(\Omega)$ and $\MA(\Omega)$ are quasi-Banach spaces (Banach spaces for $p,q\geq 1$).  When $u=p$ or $\tau=0$ we re-obtain
the usual Besov and Triebel-Lizorkin spaces defined on bounded smooth domains.
Several properties of the spaces $\MA(\Omega)$, including  the extension property, were studied in \cite{Saw2010}. As for the spaces $\at(\Omega)$ we also refer to \cite[Section~6.4.2]{ysy}. In particular, according to \cite[Theorem~6.13]{ysy}, there exists a linear and bounded extension operator
\begin{equation}\label{ext-tau-1}
\ext_\tau: \at(\Omega)\to \at(\rn),\quad \text{where}\quad 1\leq p<\infty, 0<q\leq\infty, s\in\real, \tau\geq 0,
\end{equation}
such that
\begin{equation}\label{ext-tau-2}
\mathrm{re} \circ \ext_\tau  = \id\quad\text{in}\quad \at(\Omega),
\end{equation}
where $\mathrm{re}: \at(\rn)\to \at(\Omega)$ is the restriction operator as above.

Embeddings within the scale of spaces $\MA(\Omega)$ as well as to classical spaces like $C(\Omega)$ or $L_r(\Omega)$ were investigated in \cite{hs12b,hs14}.
In \cite{hms} we studied the question under what assumptions these spaces consist of regular distributions only. In \cite{YHMSY} we considered the approximation numbers of some special compact embedding of $\at(\Omega)$ into $L_\infty(\Omega)$.

\end{remark}

\ignore{\begin{remark}

\end{remark}
}


\ignore{\begin{definition}\label{tau-spaces-Omega}
Let $s\in\real$, $\tau\in [0,\infty)$, $q\in (0,\infty]$ and $p\in (0,\infty]$ (with $p<\infty$ in case of $\at=\ft$). Then
$\at(\Omega)$ is defined by
\[
\at(\Omega):=\big\{f\in {\cal D}'(\Omega): f=g\vert_{\Omega} \text{ for some } g\in \at(\rn)\big\}
\]
endowed with the quasi-norm
\[
\big\|f\mid \at(\Omega)\big\|:= \inf \big\{ \|g\mid \at(\rn)\|:  f=g\vert_{\Omega}, \; g\in  \at(\rn)\big\}.
\]
\end{definition}
}

\begin{remark}\label{emb-Omega}
  Let us mention that we have the counterparts of many continuous embeddings stated in the previous subsection for spaces on $\rn$ when dealing with spaces restricted to bounded domains. This concerns, in particular, the elementary embeddings and coincidences \eqref{elem-0-t}--\eqref{elem-tau}, Proposition~\ref{yy02} and \eqref{elem-0}--\eqref{fte}. 
\end{remark}

\subsection{Entropy numbers}

\label{sect-ek}

As explained in the beginning already, our main concern in this paper is to characterise the compactness of embeddings in further detail. Therefore we briefly recall the concepts of entropy and approximation numbers.


\begin{definition}\label{defi-ak}
Let $ X $ and $Y$ be two complex (quasi-) Banach spaces,
$k\in\nn\ $ and let $\ T\in\mathcal{L}(X,Y)$ be a linear and
continuous operator from $ X $ into $Y$. 
\bli
\item[{\hfill\bfseries\upshape (i)\hfill}]
The {\em k\,th entropy number} $\ e_k(T)\ $ of $\ T\ $ is the
infimum of all numbers $\ \varepsilon>0\ $ such that there exist $\ 2^{k-1}\ $ balls
in $\ Y\ $ of radius $\ \varepsilon\ $ which cover the image $\ T\,B_X$ of the unit ball $\ B_X=\{x\in
X:\;\|x|X\|\leq 1\}$.
\item[{\hfill\bfseries\upshape (ii)\hfill}]
The {\em k\,th approximation number} $\ a_k(T)\ $ of $\ T\ $ is defined by
\begin{equation}
  a_k(T) = \inf \{ \| T - S \| : S \in {\mathcal L}(X,Y), \,
  \mathrm{rank}\, S < k \},\quad k\in\nn.
  \label{approx-n}
  \end{equation}
\eli
\end{definition}

\begin{remark}
For details and properties of entropy and approximation numbers we refer to \cite{CS,EE,Koe,Pie-s} (restricted to the case of Banach spaces), and \cite{ET} for some extensions to quasi-Banach spaces. Among other
features we only want to mention
the multiplicativity of entropy numbers: let $X,Y,Z$
be complex (quasi-) Banach spaces and $\ T_1 \in\mathcal{L}(X,Y)$, $ T_2 \in\mathcal{L}(Y,Z)$. Then
\beq
e_{k_1+k_2-1} (T_2\circ T_1) \leq e_{k_1}(T_1)\,
e_{k_2} (T_2),\quad k_1, k_2\in\nn.
\label{e-multi}
\eeq
Note that one has in general
$\ \lim_{k\rightarrow\infty} e_k(T)= 0\ $ {if, and only if,}
$\ T\ $ {is compact}. 
The last equivalence justifies the saying that entropy numbers measure
`how compact' an operator acts. This is one reason to study the asymptotic
behaviour of entropy numbers (that is, their decay) for compact operators in
detail.

Approximation numbers share many of the basic features of entropy numbers, but are different in some respect. They can -- unlike entropy numbers -- be regarded as special {\itshape
  $s$-numbers}, a concept introduced by {Pietsch} \cite[Secttion~11]{Pia}. Of special importance is the close connection of both concepts, entropy numbers as well as approximation numbers, with spectral theory, in particular, the estimate of eigenvalues. We refer to the monographs \cite{CS,EE,ET,Koe,Pie-s} for further details.
\end{remark}

\begin{remark}\label{remark-ek-ak-class}
We recall what is well-known in the case of the embedding
\[
\id_A:
\Ae(\Omega) \to \Az(\Omega),
\]
where $\ -\infty<s_2\leq s_1<\infty$, $ 0<p_1, p_2\leq\infty$
($p_1, p_2<\infty $ in the $F$-case), $ 0<q_1, q_2\leq\infty$, and the spaces $\A(\Omega)$ are defined by restriction. Let
\begin{equation}
\delta = s_1-s_2-d\left(\frac{1}{p_1}-\frac{1}{p_2}\right), \quad \delta_+= s_1-s_2-d\left(\frac{1}{p_1}-\frac{1}{p_2}\right)_+ .
\label{delta+}
\end{equation}
Then $\id_A $ is compact when $\ \delta_+>0$; cf. \cite[(2.5.1/10)]{ET}.
In this situation  {Edmunds} and {Triebel} proved in \cite{ET1,ET2} (see also \cite[Theorem~3.3.3/2]{ET}) that
\begin{equation}
e_k(\id_A) \ \sim \ k^{-\frac{s_1-s_2}{d}},\quad k\in\nn,\label{ek-id_A}
\end{equation}
where $ s_1\geq s_2$, $ 0<p_1, p_2\leq\infty\ (p_1, p_2<\infty $ in the
$ F$-case), $ 0<q_1, q_2\leq\infty$, and $\ \delta_+>0$. In the case of
approximation numbers the situation is more complicated; the result of
{Edmunds} and {Triebel} in \cite[Theorem~3.3.4]{ET}, partly
improved by {Caetano} \cite{Cae}, 
reads as
\begin{equation}
a_k(\id_A)  \quad\sim\quad k^{-\frac{\delta_+}{d} - \varkappa},\quad k\in\nn,\label{ak-id_A}
\end{equation}
with
\begin{equation}
\varkappa\ =   \left(\frac{\min\{p_1',p_2\}}{2}-1\right)_+ \cdot\min
\left\{\frac{\delta}{d},\frac{1}{\min\{p_1',p_2\}}\right\},
\end{equation}
where $\delta$ is given by \eqref{delta+} and $p_1'$ denotes the conjugate of $p_1$ defined by $\frac{1}{p_1} + \frac{1}{p_1'}=1$ if $1\leq p_1 \leq \infty$ and $p_1'=\infty$ if $0<p_1 <1$. The above asymptotic result is almost complete
now, apart from the restrictions that $(p_1, p_2) \neq (1,\infty)$  or $\frac{\delta}{d}\not= \frac{1}{\min\{p'_1, p_2\}}$ when
$0<p_1< 2 <p_2 \leq\infty$. Note that $\varkappa=0$ unless $p_1<2<p_2$, and $\delta\geq \delta_+$ with $\delta=\delta_+$ if $p_1\leq p_2$.
\end{remark}

\section{Compact embeddings}\label{sect-comp}

First we recall our compactness result as obtained in \cite{hs12b} (for $\mathcal{A}=\mathcal{N}$) and \cite{hs14} (for $\mathcal{A}=\mathcal{E}$). We shall heavily rely on this result in our argument below.\\

\noindent{\em Convention.}
 Here and in the sequel we shall understand $\frac{p_i}{u_i}=1$ in case of $p_i=u_i=\infty$, $i=1,2$.

\begin{theorem}  \label{comp}
  Let  $s_i\in \real$, $0<q_i\leq\infty$, $0<p_i\leq u_i<\infty$, {or $p_i=u_i=\infty$ in the case of $\mathcal N$-spaces},  $i=1,2$.
Then  the embedding
\begin{equation} \label{bd1comp}
	 \id_{\mathcal{A}}: \MAe(\Omega )\hookrightarrow \MAz(\Omega )
\end{equation}
is compact if, and only if,
the following condition holds
\begin{equation}\label{bd3acomp}
\frac{s_1-s_2}{d} > \max\bigg\{0,\frac{1}{u_1} - \frac{1}{u_2},
\frac{p_1}{u_1} \Big(\frac{1}{p_1}- \frac{1}{p_2}\Big) \bigg\}.
\end{equation}

In  particular, if $p_1=u_1=\infty$ and $\MAe=\MBe$, then $\id_{\mathcal{A}}$ given by \eqref{bd1comp} is compact if, and only if,  $s_1>s_2$.
If $p_2=u_2=\infty$  and $\MAz=\MBz$, then $\id_{\mathcal{A}}$ is compact if, and only if,
$\frac{s_1-s_2}{d} > \frac{1}{u_1}$.
\end{theorem}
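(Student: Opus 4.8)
The plan is to deduce the full statement from the finite-parameter case $0<p_i\le u_i<\infty$, which is already available from \cite{hs12b} (for $\mathcal N$) and \cite{hs14} (for $\mathcal E$); only the two endpoint situations $p_1=u_1=\infty$ and $p_2=u_2=\infty$, which occur only for $\mathcal N$-spaces, require a new argument. Throughout I use $\mathcal N^{s}_{\infty,\infty,q}(\Omega)=B^{s}_{\infty,q}(\Omega)$, cf.\ \eqref{MB=B}, the elementary inclusion $L_u\hookrightarrow\mathcal M_{u,p}$ from \eqref{LinM} (hence $B^{s}_{u,q}=\mathcal N^{s}_{u,u,q}\hookrightarrow\mathcal N^{s}_{u,p,q}$), and, on the bounded domain $\Omega$, the reverse-type inclusion $\mathcal N^{s}_{u,p,q}(\Omega)\hookrightarrow B^{s}_{p,q}(\Omega)$, which stems from the fact that on balls of bounded radius the Morrey norm dominates the $L_p$ norm. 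The whole scheme is to insert a classical Besov space as an intermediate (or comparison) space and then invoke the finite case together with the classical criterion: $B^{s_1}_{p_1,q_1}(\Omega)\hookrightarrow B^{s_2}_{p_2,q_2}(\Omega)$ is compact if, and only if, $\delta_+>0$, see \eqref{delta+}, \eqref{ek-id_A} and \cite{ET1,ET2,ET}.

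For sufficiency I exploit the strictness of \eqref{bd3acomp}. When $p_1=u_1=\infty$ the right-hand side of \eqref{bd3acomp} equals $0$, so the hypothesis reads $s_1>s_2$; here I factor $B^{s_1}_{\infty,q_1}(\Omega)\hookrightarrow B^{s_1}_{u,q_1}(\Omega)=\mathcal N^{s_1}_{u,u,q_1}(\Omega)\hookrightarrow\mathcal N^{s_2}_{u_2,p_2,q_2}(\Omega)$, the first embedding being continuous on the bounded domain and the second compact by the finite case once a large finite $u$ is chosen, since its compactness threshold $\max\{0,\frac1u-\frac1{u_2},\frac1u-\frac1{p_2}\}$ tends to $0<\tfrac{s_1-s_2}{d}$ as $u\to\infty$. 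When $p_2=u_2=\infty$ the threshold equals $\tfrac1{u_1}$; given $\tfrac{s_1-s_2}{d}>\tfrac1{u_1}$ I pick $s_2<s''<s_1-\tfrac{d}{u_1}$ and a large finite $u$ and factor $\mathcal N^{s_1}_{u_1,p_1,q_1}(\Omega)\hookrightarrow B^{s''}_{u,q}(\Omega)\hookrightarrow B^{s_2}_{\infty,q_2}(\Omega)$, with the first embedding compact by the finite case and the second the continuous classical one guaranteed by $s''-\tfrac{d}{u}>s_2$. The admissible window for $s''$ is nonempty precisely under the hypothesis, which is where strictness enters.

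For necessity I first observe that the non-strict form of \eqref{bd3acomp} is exactly the continuity threshold, so whenever the left-hand side is strictly smaller than the right-hand side the embedding is not even continuous, hence not compact. It remains to treat the boundary case of equality in \eqref{bd3acomp}, where the embedding is continuous; here I reduce to the classical borderline $\delta_+=0$, which is non-compact. If $p_2=u_2=\infty$ and $\tfrac{s_1-s_2}{d}=\tfrac1{u_1}$, I pre-compose with $B^{s_1}_{u_1,q_1}(\Omega)=\mathcal N^{s_1}_{u_1,u_1,q_1}(\Omega)\hookrightarrow\mathcal N^{s_1}_{u_1,p_1,q_1}(\Omega)$; were $\id_{\mathcal A}$ compact, so would be $B^{s_1}_{u_1,q_1}(\Omega)\to B^{s_2}_{\infty,q_2}(\Omega)$, whose $\delta_+=s_1-s_2-\tfrac{d}{u_1}=0$ forbids compactness. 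If $p_1=u_1=\infty$ and $s_1=s_2=:s$, the source already sits at the $\infty$-corner and no classical subspace detects the loss of compactness, so I instead post-compose with the domain embedding $\mathcal N^{s}_{u_2,p_2,q_2}(\Omega)\hookrightarrow B^{s}_{p_2,q_2}(\Omega)$; compactness of $\id_{\mathcal A}$ would then force compactness of $B^{s}_{\infty,q_1}(\Omega)\to B^{s}_{p_2,q_2}(\Omega)$, again with $\delta_+=0$, a contradiction.

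The main obstacle is this last boundary case $p_1=u_1=\infty$, $s_1=s_2$. Unlike every other configuration, the source space lies at the extreme integrability corner, so the source-side factorization through classical spaces that works elsewhere is unavailable; one is forced onto the target side and must secure the continuous embedding $\mathcal N^{s}_{u_2,p_2,q_2}(\Omega)\hookrightarrow B^{s}_{p_2,q_2}(\Omega)$ on the bounded domain, whose analogue on $\rn$ fails. Establishing this comparison with the correct smoothness and integrability—equivalently, exhibiting directly a bounded sequence in $B^{s}_{\infty,q_1}(\Omega)$ with no subsequence converging in $\mathcal N^{s}_{u_2,p_2,q_2}(\Omega)$—is the crux; everything else reduces to bookkeeping with the strict inequality and the finite-case theorem.
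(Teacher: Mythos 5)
Your proposal is correct and follows essentially the same strategy as the paper: reduce to the finite-parameter case from \cite{hs12b,hs14} and handle the two endpoint configurations by sandwiching between classical Besov spaces, then invoke the classical compactness criterion $\delta_+>0$ from Remark~\ref{remark-ek-ak-class}. Your necessity arguments are precisely the paper's chains \eqref{pu-1-infty-2} and \eqref{pu-2-infty-3}, and the inclusion $\MAz(\Omega)\hookrightarrow A^{s_2}_{p_2,q_2}(\Omega)$ that you single out as the crux is exactly the result from \cite{hs12b,hs14} that the paper cites there; your heuristic for it (the Morrey norm dominates the $L_{p}$ norm on a bounded set) is the right one. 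Two remarks. First, for sufficiency when $p_2=u_2=\infty$ you factor through $B^{s''}_{u,q}(\Omega)$ with a large finite auxiliary $u$ and an intermediate smoothness $s''$, making the \emph{Morrey} leg compact; the paper instead uses the continuous embedding $\MBe(\Omega)\hookrightarrow B^{s_1-d/u_1}_{\infty,\infty}(\Omega)$ (an extension of \cite[Theorem~3.1]{hs12b}) followed by a compact classical embedding. Your variant buys a slightly more self-contained argument, at the cost of an extra limiting choice of $u$; both work. Second, your opening claim for necessity --- that the non-strict form of \eqref{bd3acomp} is ``exactly the continuity threshold'', so that strict violation already kills continuity --- is not established for the endpoint cases in the cited references and should be dropped; fortunately it is redundant, since the sandwich you set up for the ``boundary'' case yields $\delta_+>0$ for the composed classical embedding and therefore rules out compactness for \emph{every} failure of \eqref{bd3acomp}, not only at equality. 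Only minor bookkeeping remains: the doubly infinite case $p_1=u_1=p_2=u_2=\infty$ is purely classical and must be set aside first, and when the target is an $\mathcal{E}$-space one inserts \eqref{elem} (or \eqref{LinM}, as the paper does in \eqref{pu-1-infty-1}) to pass from the $\mathcal{N}$-scale.
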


\begin{proof}
The cases when $0<p_i\leq u_i<\infty$, $i=1,2$, were proved in \cite{hs12b} and \cite{hs14} for $\mathcal{A}=\mathcal{N}$ and $\mathcal{A}=\mathcal{E}$ respectively. So we are left with the cases $p_1=u_1=\infty$ or $p_2=u_2=\infty$.

At first, let us consider the case when $p_1=u_1=\infty$ and $\MAe=\MBe$. If $s_1-s_2>0$, the compactness of $\id_{\mathcal{A}}$ follows from
\begin{equation}\label{pu-1-infty-1}
\mathcal{N}^{s_1}_{\infty,\infty,q_1}(\Omega)= B^{s_1}_{\infty, q_1} (\Omega)\hookrightarrow A^{s_2}_{u_2,q_2}(\Omega) \hookrightarrow \mathcal{A}^{s_2}_{u_2,p_2,q_2}(\Omega),
\end{equation}
as the first embedding is compact when $s_1-s_2>0$.

Now we assume that $\id_{\mathcal{A}}$ is compact. We have
 \begin{equation}\label{pu-1-infty-2}
B^{s_1}_{\infty, q_1} (\Omega)= \mathcal{N}^{s_1}_{\infty,\infty,q_1}(\Omega) \hookrightarrow \mathcal{A}^{s_2}_{u_2,p_2,q_2}(\Omega) \hookrightarrow A^{s_2}_{p_2,q_2}(\Omega),
\end{equation}
where the last embedding was proved in \cite{hs12b} and \cite{hs14}. Then, the compactness of the first embedding implies the compactness of the embedding between the outer spaces, which in turn implies $s_1-s_2>0$.


    {Now let $p_2=u_2=\infty$, $\MAz=\MBz$ and $\frac{s_1-s_2}{d} > \frac{1}{u_1}$. As the case $p_1=u_1=\infty$ (when $\mathcal{A}=\mathcal{N}$) is already covered by our preceding observation, we may further assume that $0<p_1\leq u_1<\infty$. By a  straightforward extension of our continuity result in \cite[Theorem~3.1]{hs12b} (to the cases when $p_i=u_i=\infty$ for $i=1$ or $i=2$)  we have the continuous embedding
\begin{equation} \label{pu-2-infty-1}
\MBe(\Omega) \hookrightarrow B^{s_1-\frac{d}{u_1}}_{\infty,\infty}(\Omega). 
\end{equation}
Moreover, in case of $s_1-\frac{d}{u_1}>s_2$, it is well-known that the embedding
\begin{equation}\label{pu-2-infty-2}
  B^{s_1-\frac{d}{u_1}}_{\infty,\infty}(\Omega) \hookrightarrow  B^{s_2}_{\infty, q_2} (\Omega) = \mathcal{N}^{s_2}_{\infty,\infty,q_2}(\Omega)
\end{equation}
is compact.
  Thus $\MBe(\Omega) \hookrightarrow \mathcal{N}^{s_2}_{\infty,\infty,q_2}(\Omega)$ compactly for any $q_1, q_2\in (0,\infty]$. The compactness of $\MFe(\Omega) \hookrightarrow \mathcal{N}^{s_2}_{\infty,\infty,q_2}(\Omega)$ is then a consequence of \eqref{elem}.
  }

  The necessity follows from the following chain of embeddings
\begin{equation}\label{pu-2-infty-3} 
B^{s_1}_{u_1, q_1} (\Omega) \hookrightarrow \MBe (\Omega) \hookrightarrow  \mathcal{N}^{s_2}_{\infty,\infty,q_2}(\Omega)=B^{s_2}_{\infty, q_2} (\Omega)
\end{equation}
in the same way as above. Finally we apply \eqref{elem} for the case $\MAe=\MFe$.
\end{proof}




Now we give the counterpart of Theorem~\ref{comp} for Besov-type and Triebel-Lizorkin-type spaces. For convenience we use some abbreviation for the following expression, which plays an essential role in the sequel. So let us denote

\begin{equation}
\critical :=
\max\left\{\left(\tau_2-\frac{1}{p_2}\right)_+ -\left(\tau_1-\frac{1}{p_1}\right)_+,                                  \frac{1}{p_1}-\tau_1 - \min\left\{\frac{1}{p_2}-\tau_2, \frac{1}{p_2}(1-p_1\tau_1)_+\right\}\right\}.
 \label{gamma}
\end{equation}


\begin{theorem} \label{comp-tau-u}
  Let  $s_i\in \real$, $0<q_i\leq\infty$, $0<p_i\leq \infty$ (with $p_i<\infty$ in case of $A=F$), $\tau_i\geq 0$, $i=1,2$.
    The embedding
\begin{equation} \label{tau-comp-u1}
	 \id_{\tau}: \ate(\Omega )\hookrightarrow \atz(\Omega )
\end{equation}
is compact if, and only if,
the following condition holds
\begin{equation}\label{tau-comp-u2}
  \frac{s_1-s_2}{d} > \critical.
\end{equation}
\end{theorem}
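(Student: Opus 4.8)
The plan is to prove Theorem~\ref{comp-tau-u} entirely by reducing it to the already-established Theorem~\ref{comp} for the Morrey scale and to the classical result recalled in Remark~\ref{remark-ek-ak-class}, exploiting the coincidences and embeddings collected in the Preliminaries. The strategy rests on the philosophy stated in the Introduction: since compactness requires the \emph{strict} inequality \eqref{tau-comp-u2}, there is always ``room'' to insert auxiliary spaces between the source and target, and a composition of embeddings is compact as soon as one factor is. Concretely, I would first dispose of the parameter ranges in which $\at$ collapses onto a classical space: by Proposition~\ref{yy02} (valid on $\Omega$ by Remark~\ref{emb-Omega}), whenever $\tau_i>\frac{1}{p_i}$, or $\tau_i=\frac{1}{p_i}$ with $q_i=\infty$, the space $\ate(\Omega)$ or $\atz(\Omega)$ coincides with a Besov space $B^{s_i+d(\tau_i-1/p_i)}_{\infty,\infty}(\Omega)$, and the claim follows from \eqref{ek-id_A}-type reasoning, i.e.\ the classical compactness criterion $\delta_+>0$. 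In these regimes one checks directly that \eqref{tau-comp-u2} reduces to the classical condition.

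The heart of the proof is the generic range $0\le\tau_i<\frac{1}{p_i}$ (equivalently $p_i\tau_i<1$), where I would translate everything into the Morrey scale. The key dictionary is \eqref{fte} for $F$-spaces, $\ft(\Omega)=\MF(\Omega)$ with $\tau=\frac1p-\frac1u$, and for $B$-spaces the embeddings \eqref{N-BT-emb}--\eqref{N-BT-equal}, $\MB(\Omega)\hookrightarrow\bt(\Omega)$ with the same relation $\tau=\frac1p-\frac1u$, together with the coincidence $\mathcal{N}^s_{u,p,\infty}(\Omega)=B^{s,1/p-1/u}_{p,\infty}(\Omega)$. Setting $\frac{1}{u_i}:=\frac{1}{p_i}-\tau_i$ for each index, I would rewrite the Morrey compactness condition \eqref{bd3acomp} in terms of $\tau_i,p_i$ and verify that it becomes exactly $\gamma(\tau_1,\tau_2,p_1,p_2)$ in \eqref{gamma}. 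This is the algebraic crux: the three terms $0$, $\frac{1}{u_1}-\frac{1}{u_2}$, and $\frac{p_1}{u_1}(\frac{1}{p_1}-\frac{1}{p_2})$ in \eqref{bd3acomp}, after substitution $\frac1{u_i}=\frac1{p_i}-\tau_i$ and using the convention $p_i/u_i=1-p_i\tau_i$, must be shown to assemble into the two-term maximum in \eqref{gamma}, with the $(\cdot)_+$ and $\min\{\cdot\}$ structure accounting for the various sign cases (in particular $(1-p_1\tau_1)_+$ reflecting whether $\frac{p_1}{u_1}$ is positive).

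For the sufficiency direction in this generic range I would argue: given \eqref{tau-comp-u2}, use \eqref{elem-0-t} to lose an $\varepsilon$ of smoothness and insert an intermediate space, then sandwich between Morrey spaces via the coincidence/embedding above, apply Theorem~\ref{comp} to the inserted compact embedding of the $\MA$-scale, and compose. For the $F$-case this is clean because of the exact identity \eqref{fte}; for the $B$-case one uses \eqref{N-BT-emb} on the source side and \eqref{N-BT-equal} (or \eqref{elem-1-t} to adjust the fine index $q$) on the target side, since the embedding $\MB\hookrightarrow\bt$ goes the convenient direction. The necessity direction runs the chain backwards: if $\id_\tau$ is compact, compose with fixed continuous embeddings into and out of suitable classical or Morrey spaces so that a known \emph{non}-compactness (the failure of \eqref{bd3acomp}, equivalently of \eqref{tau-comp-u2}) would be contradicted.

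The main obstacle I anticipate is twofold. First, the purely computational reconciliation of \eqref{bd3acomp} with \eqref{gamma} is delicate because of the interplay of the two $(\cdot)_+$ brackets and the inner $\min$; one must carefully split into the cases $p_1\tau_1\lessgtr1$ and $\tau_i\lessgtr\frac1{p_i}$ and check each produces the correct term, including the borderline coincidences from Proposition~\ref{yy02}. Second, and more essentially, the dictionary between scales is \emph{asymmetric}: for $F$-spaces one has the genuine identity \eqref{fte}, but for $B$-spaces only the one-directional embedding \eqref{N-BT-emb} (proper when $\tau>0$, $q<\infty$). Handling the $B$-case therefore requires placing the Morrey space on the correct side of each embedding and absorbing the fine index $q_i$ via \eqref{elem-1-t}; ensuring that the resulting sandwich still yields an \emph{equivalence} (both compactness and its failure), rather than just one implication, is where the argument must be most careful.
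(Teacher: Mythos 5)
Your overall strategy is exactly the paper's: reformulate \eqref{tau-comp-u2} according to the position of $\tau_i$ relative to $\frac1{p_i}$, collapse the large-$\tau$ regimes onto classical Besov spaces via Proposition~\ref{yy02}, and in the range $\tau_i<\frac1{p_i}$ translate into the Morrey scale through \eqref{fte} (exact for $F$) and the sandwich $\MB\hookrightarrow\bt$, $\mathcal{N}^{s}_{u,p,\infty}=B^{s,1/p-1/u}_{p,\infty}$ with the fine index adjusted by \eqref{elem-1-t}, then quote Theorem~\ref{comp}. Your algebraic reconciliation of \eqref{bd3acomp} with \eqref{gamma} under $\frac1{u_i}=\frac1{p_i}-\tau_i$, $\frac{p_i}{u_i}=1-p_i\tau_i$ is correct, and your observation that the $q$-independence of \eqref{bd3acomp} is what lets the one-directional embedding \eqref{N-BT-emb} still yield an equivalence is the right key point for the $B$-case.

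There is, however, a genuine gap in your case analysis: your two regimes --- (a) $\tau_i>\frac1{p_i}$, or $\tau_i=\frac1{p_i}$ with $q_i=\infty$, and (b) $0\le\tau_i<\frac1{p_i}$ --- do not exhaust the parameter space. The limiting cases $\tau_i=\frac1{p_i}$ with $q_i<\infty$ (for $i=1$, for $i=2$, and for both) are covered by neither: Proposition~\ref{yy02} explicitly requires $q=\infty$ on the borderline, and \eqref{fte} requires $\tau<\frac1p$ strictly, so these spaces are neither classical Besov spaces in disguise nor Morrey spaces. The paper spends a substantial part of its proof (Substep~2.3 and all of Step~3) on precisely these cases, and they need different tools: the coincidence $F^{s,1/p}_{p,q}=F^s_{\infty,q}=B^{s,1/q}_{q,q}$ from \eqref{ftbt}, the one-sided estimate \eqref{010319} $A^{s,\tau}_{p,q}\hookrightarrow B^{s+d(\tau-1/p)}_{\infty,\infty}$, and the embedding $B^{s}_{\infty,q}(\Omega)\hookrightarrow A^{s,1/p}_{p,q}(\Omega)$ from \cite[Corollary~5.2]{YHSY} to trap the source and target from both sides. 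Without naming a device that sandwiches, say, $B^{s_1,1/p_1}_{p_1,q_1}(\Omega)$ with $q_1<\infty$ between two spaces whose mutual embedding is compact exactly when $s_1>s_2$, your proof does not close in these regimes; note that the answer there ($s_1>s_2$, with no $\tau$- or $p$-shift) is not what either of your two dictionaries would predict by formal continuity in the parameters. A similar, smaller omission occurs in your necessity argument for $\tau_2\ge\frac1{p_2}$: to deduce the condition from compactness when $0<\tau_1\le\frac1{p_1}$ you must embed a concrete classical space into $\ate(\Omega)$ from below, which again requires \cite[Corollary~5.2]{YHSY} rather than anything listed in your toolkit.
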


\begin{proof}
We shall use sharp embeddings and identities like  \eqref{010319}, Proposition \ref{yy02} and  \eqref{fte} (all adapted to spaces restricted to the domain $\Omega$, recall Remark~\ref{emb-Omega})
    together with our previous result Theorem~\ref{comp} several times. Therefore we shall always  distinguish below between the cases $\tau_i<\frac{1}{p_i}$ and $\tau_i\geq \frac{1}{p_i}$ (with some additional restrictions on $q_i$ occasionally), $i=1,2$. For that reason it seems convenient to reformulate condition \eqref{tau-comp-u2} according to these cases; that is, the goal is to prove that $\id_\tau$ given by \eqref{tau-comp-u1} is compact if, and only if,
  \begin{equation}\label{reform}
    \frac{s_1-s_2}{d} > \begin{cases}
      \frac{1}{p_1}-\tau_1-\frac{1}{p_2}+\tau_2, & \text{if}\quad \tau_2\geq \frac{1}{p_2}, \\[1ex]
\frac{1}{p_1}-\tau_1, &\text{if}\quad  \tau_1\geq \frac{1}{p_1}, \ \tau_2< \frac{1}{p_2}, \\[1ex]
     \max\left\{0, \frac{1}{p_1}-\tau_1-\frac{1}{p_2}+\max\left\{\tau_2,\frac{p_1}{p_2}\tau_1\right\}\right\}, &\text{if}\quad  \tau_1< \frac{1}{p_1}, \  \tau_2< \frac{1}{p_2}.
    \end{cases}
  \end{equation}

  \emph{Step 1.} Let us assume that $\tau_2\geq \frac{1}{p_2}$ with $q_2=\infty$ {if} $\tau_2=\frac{1}{p_2}$. First we prove the sufficiency of \eqref{reform} for the compactness of $\id_\tau$, that is, we assume now
  \begin{equation}\label{reform-1}
    s_1+d\left(\tau_1-\frac{1}{p_1}\right)>s_2+d\left(\tau_2-\frac{1}{p_2}\right).
  \end{equation}
  Then the  embedding \eqref{010319} and Proposition \ref{yy02} yield
\begin{align}\label{tlarge1}
\ate(\Omega )\hookrightarrow  B^{s_1+d(\tau_1-\frac{1}{p_1})}_{\infty,\infty}(\Omega)\hookrightarrow  B^{s_2+d(\tau_2-\frac{1}{p_2})}_{\infty,\infty}(\Omega) = \atz(\Omega )
\end{align}
and the embedding between the Besov spaces is compact. So $\id_\tau$ is compact.

Now we turn to the necessity of \eqref{reform-1} for the compactness of $\id_\tau$. So we assume that $\id_\tau$ given by \eqref{tau-comp-u1} is compact. Let first $\tau_1\geq \frac{1}{p_1}$ with $q_1=\infty$ if $\tau_1=\frac{1}{p_1}$. Then by Proposition~\ref{yy02} we obtain
\begin{align}\label{tlarge2}
B^{s_1+d(\tau_1-\frac{1}{p_1})}_{\infty,\infty}(\Omega)= \ate(\Omega ) \hookrightarrow \atz(\Omega )=  B^{s_2+d(\tau_2-\frac{1}{p_2})}_{\infty,\infty}(\Omega)
\end{align}
which results in a compact embedding between the outer Besov spaces. This is well-known to imply \eqref{reform-1} as desired.  A similar argument works for $p_1=\infty$, $\tau_1=0$ and $q_1<\infty$, since then we have
 \begin{align*}
B^{s_1}_{\infty,q_1}(\Omega)= \bte(\Omega ) \hookrightarrow \atz(\Omega )=  B^{s_2+d(\tau_2-\frac{1}{p_2})}_{\infty,\infty}(\Omega).
\end{align*}

 Let now  $0<\tau_1 \leq \frac{1}{p_1}$ with $q_1<\infty$ if $\tau_1=\frac{1}{p_1}$. We take {$p_0\in (0,\infty)$ such that $\frac{1}{p_0}>\frac{1}{p_1}-\tau_1$ }
 and $q_0=\min\{p_1,q_1\}$. Then {Corollary 5.2} in \cite{YHSY} implies
\begin{align}\label{below-tau-small}
B^{s_1+d(\tau_1-\frac{1}{p_1}+\frac{1}{p_0})}_{p_0,q_0}(\Omega)\hookrightarrow B^{s_1,\tau_1}_{p_1,q_0}(\Omega) \hookrightarrow\ate(\Omega ) \hookrightarrow \atz(\Omega )=  B^{s_2+d(\tau_2-\frac{1}{p_2})}_{\infty,\infty}(\Omega).
\end{align}
Once more the compactness of  $\id_\tau$ implies \eqref{reform-1}.  If $\tau_1=0$, then $B^{s_1}_{p_1,q_0}(\Omega)$ can be embedded into $\ate(\Omega)$ and the similar argument holds. This concludes the proof in that case of $\tau_2\geq \frac{1}{p_2}$.\\

\emph{Step 2}. Next we assume that $0\leq \tau_2 < \frac{1}{p_2}$ and benefit from the coincidence \eqref{fte}.\\

{\em Substep 2.1}. 
{If also $0\leq \tau_1 < \frac{1}{p_1}$, then \eqref{tau-comp-u2} reads as
\begin{equation}\label{reform-2}
    \frac{s_1-s_2}{d}>\max\left\{0, \frac{1}{p_1}-\tau_1-\frac{1}{p_2}+\max\left\{\tau_2,\frac{p_1}{p_2}\tau_1\right\}\right\}.
\end{equation}
For the $F$-spaces, the result immediately follows from coincidence \eqref{fte} and Theorem \ref{comp}. Note that in this case \eqref{bd3acomp} coincides with the last line in \eqref{reform} in view of $\frac{1}{u_i}=\frac{1}{p_i}-\tau_i$, $i=1,2$, as required by \eqref{fte}.

We shall then prove the result for the Besov scale. At first, let us assume \eqref{reform-2} holds. Then \eqref{elem-1-t}, \eqref{N-BT-emb} and \eqref{N-BT-equal} yield
\begin{align}\label{tau2small-1}
\bte(\Omega)\hookrightarrow  B^{s_1, \tau_1}_{p_1, \infty}(\Omega)={\cal N}^{s_1}_{u_1, p_1, \infty}(\Omega)\hookrightarrow \MBz(\Omega) \hookrightarrow \btz(\Omega)
\end{align}
and the embedding between the Besov-Morrey spaces is compact. Therefore $\id_\tau$ is compact. 

We now turn to the necessity and assume that $\id_\tau$ is compact. Then
\begin{align}\label{tau2small-2}
\MBe(\Omega) \hookrightarrow  \bte (\Omega) \hookrightarrow \btz(\Omega) \hookrightarrow B^{s_2, \tau_2}_{p_2, \infty}(\Omega) = {\cal N}^{s_2}_{u_2, p_2, \infty}(\Omega),
\end{align}
where we have used again \eqref{N-BT-emb}, \eqref{elem-1-t} and the coincidence \eqref{N-BT-equal}. In view of Theorem \ref{comp} this leads to the desired condition \eqref{reform-2}. \\}

{\em Substep 2.2}. An analogous argument works for $\tau_1 \ge \frac{1}{p_1}$ with $q_1=\infty$ if $\tau_1 = \frac{1}{p_1}$, cf. Proposition \ref{yy02}. This time  \eqref{tau-comp-u2} coincides with the second line of \eqref{reform}. 
{So for the sufficiency we assume that 
\begin{equation}\label{reform-3}
    \frac{s_1-s_2}{d}>\tau_1-\frac{1}{p_1}.
\end{equation}
Then Proposition \ref{yy02}, the embedding \eqref{N-BT-emb} and the coincidence \eqref{fte} give
\begin{align} \label{tau2small-3}
\ate(\Omega) = B^{s_1+d(\tau_1-\frac{1}{p_1})}_{\infty,\infty} (\Omega) = {\cal N}^{s_1+d(\tau_1-\frac{1}{p_1})}_{\infty, \infty, \infty}(\Omega) \hookrightarrow \MAz (\Omega) \hookrightarrow \atz (\Omega).
\end{align}
Then, by Theorem \ref{comp}, $\id_\tau$ is compact. 
Conversely, let us assume that $\id_\tau$ is compact. We benefit from Proposition \ref{yy02} and the coincidences \eqref{N-BT-equal} and \eqref{fte} to obtain
\begin{align} \label{tau2small-4}
{\cal N}^{s_1+d(\tau_1-\frac{1}{p_1})}_{\infty, \infty, \infty}(\Omega) = B^{s_1+d(\tau_1-\frac{1}{p_1})}_{\infty, \infty}(\Omega)= \ate(\Omega) \hookrightarrow \atz (\Omega) \hookrightarrow {\cal N}^{s_2}_{u_2, p_2,\infty}(\Omega).
\end{align}
In case of $A=F$, the last embedding is true due to the elementary embeddings \eqref{elem}. Therefore, the compactness of $\id_\tau$ and Theorem \ref{comp} lead to the desired condition $\frac{s_1-s_2}{d}>\tau_1-\frac{1}{p_1}$.
}
\\

{\em Substep 2.3}. Assume finally $\tau_1=\frac{1}{p_1}$ with $q_1<\infty$. Note that in this case, due to the middle line of \eqref{reform}, the condition \eqref{tau-comp-u2} reads as $s_1>s_2$. We apply \eqref{010319} to obtain
\begin{equation}\label{lim-1}
\ate(\Omega)=A^{s_1,\frac{1}{p_1}}_{p_1,q_1}(\Omega)\hookrightarrow  B^{s_1}_{\infty,\infty}(\Omega)\hookrightarrow \MAz(\Omega) \hookrightarrow \atz(\Omega)  \quad\text{with}\quad  \frac{1}{u_2}=\frac{1}{p_2}-\tau_2>0.
\end{equation}
The second embedding is compact for $s_1>s_2$ in view of Theorem \ref{comp} and the last embedding is a consequence of \eqref{N-BT-emb} and \eqref{fte}, respectively. Conversely, if $\id_\tau$ is compact in this case, then we can argue as follows. Put $\frac{1}{u_2}=\frac{1}{p_2}-\tau_2$. Then \cite[Corollary~5.2]{YHSY} (for $A=B$) and \eqref{ftbt} (for $A=F$) lead to
\begin{equation}\label{lim-2}
B^{s_1}_{\infty,q_1}(\Omega)\hookrightarrow \ate(\Omega) \hookrightarrow \atz(\Omega) \hookrightarrow A^{s_2,\tau_2}_{p_2,v_2}(\Omega) = \mathcal{A}^{s_2}_{u_2,p_2,v_2}(\Omega)\hookrightarrow \mathcal{N}^{s_2}_{u_2,p_2,\infty}(\Omega)
\end{equation}
with $v_2=\infty$ if $A=B$, and $v_2\geq q_2$ if $A=F$, where we used \eqref{N-BT-equal} and \eqref{fte} in the last equality and \eqref{elem} in the last embedding. In view of Theorem~\ref{comp} this leads to $s_1>s_2$ as required.\\

\emph{Step 3}. It remains to deal with $\tau_2 = \frac{1}{p_2}$ and $q_2<\infty$. In that case \eqref{reform} always reads as $s_1-s_2>d(\frac{1}{p_1}-\tau_1)$.\\

{\em Substep 3.1}. Assume first $\tau_1 < \frac{1}{p_1}$ and let $\frac{1}{u_1}=\frac{1}{p_1}-\tau_1$. {Then by elementary embeddings and the coincidences \eqref{N-BT-equal} and \eqref{fte}, 
\begin{align}\label{lim-6}
\ate(\Omega) \hookrightarrow \mathcal{N}^{s_1}_{u_1,p_1,\infty}(\Omega) \hookrightarrow \mathcal{N}^{s_2}_{\infty, \infty,q_0}(\Omega) = B^{s_2}_{\infty,q_0}(\Omega) \hookrightarrow B^{s_2,\tau_2}_{p_2,q_0}(\Omega), 
\end{align}
and the embedding of the outer spaces is compact for any $q_0$, since the second embedding is compact by Theorem \ref{comp} with \eqref{tau-comp-u2}. The last embedding is continuous where we apply \cite[Theorem~2.5]{YHSY}. If $A=B$, we put $q_0=q_2$ and the argument is complete, while in case of $A=F$ we choose $q_0\leq \min\{p_2,q_2\}$ and finally use the continuous embedding into $\ftz(\Omega)$ due to \eqref{elem-tau}.}



On the other hand, \cite[Corollaries 5.2, 5.9]{YHSY} and \eqref{010319} ensure
\begin{equation}\label{lim-5}
A^{s_1}_{u_1,q_1}(\Omega) \hookrightarrow  A^{s_1,\tau_1}_{p_1,q_1} (\Omega) \hookrightarrow  A^{s_2,\tau_2}_{p_2,q_2}(\Omega) \hookrightarrow  B^{s_2}_{\infty,\infty}(\Omega),
\end{equation}
such that the compactness of $\id_\tau$ implies $s_1-\frac{d}{u_1}>s_2$ by the well-known classical results. This proves the necessity of the condition. \\

{\em Substep 3.2}. Assume $\tau_1 \ge \frac{1}{p_1}$ with $q_1=\infty$ if $\tau_1 = \frac{1}{p_1}$. We can argue in the same way as above. Let $s_1-s_2>d(\frac{1}{p_1}-\tau_1)$.  We choose $s_0$ such that $s_1-d(\frac{1}{p_1}-\tau_1)>s_0>s_2$. Then by the identities in Proposition~\ref{yy02} and \eqref{ftbt},
\begin{equation}\label{tau2border}
\ate(\Omega)=B^{s_1+d(\tau_1-\frac{1}{p_1})}_{\infty,\infty}(\Omega)\hookrightarrow B^{s_0}_{\infty,\infty}(\Omega)= F^{s_0,\tau_2}_{p_2,\infty}(\Omega) \hookrightarrow \atz(\Omega),
\end{equation}
where the first embedding is compact for $s_1+d(\tau_1-\frac{1}{p_1})>s_0$ and the last embedding is continuous for $s_0>s_2$ by \eqref{elem-0-t} and \eqref{elem-tau}. Hence $\id_\tau$ is compact. Conversely, the  compactness of $\id_\tau$  implies
\begin{equation}\label{lim-9}
B^{s_1+d(\tau_1-\frac{1}{p_1})}_{\infty,\infty}(\Omega)=\ate(\Omega)\hookrightarrow {A}^{s_2,\tau_2}_{p_2,q_2}(\Omega)\hookrightarrow  B^{s_2}_{\infty,\infty}(\Omega)
\end{equation}
where we used \eqref{010319} and Proposition~\ref{yy02}. But the resulting compactness of the outer embedding of Besov spaces leads to the desired condition $ s_1+d(\tau_1-\frac{1}{p_1})>s_2$.\\

\emph{Substep 3.3}. Let finally $\tau_1 = \frac{1}{p_1}$ with $q_1<\infty$. So we are in the double-limiting case and need to show the compactness of $\id_\tau$ if, and only if, $s_1>s_2$. The sufficiency can be obtained via
\begin{equation}\label{lim-7}
\ate(\Omega) \hookrightarrow B^{s_1}_{\infty,\infty}(\Omega) \hookrightarrow B^{s_2}_{\infty,q_2}(\Omega) \hookrightarrow \atz(\Omega)
\end{equation}
where we use \cite[Corollary~5.2]{YHSY} in the last embedding in case of $A=B$, extended by the same argument as above to $A=F$ via \eqref{elem-tau}. The second embedding is compact for $s_1>s_2$.


Conversely, if $\id_\tau$ is compact, choose $q_0\leq \min\{p_1,q_1\}$. Then
\begin{equation}\label{lim-8}
B^{s_1}_{\infty,q_0}(\Omega)\hookrightarrow \ate(\Omega) \hookrightarrow \atz(\Omega) \hookrightarrow B^{s_2}_{\infty,\infty}(\Omega)
\end{equation}
is compact, where we used for the first embedding \cite[Corollary~5.2]{YHSY} (with \eqref{elem-tau} for $A=F$) again, and \eqref{010319} for the last one. But this implies $s_1>s_2$.
 \end{proof}


\begin{remark} Usually one needs the condition $s_1-s_2>0$ to prove compactness of this kind of embeddings. Curiously this is not the case when considering spaces of type $\bt$ and $\ft$. This can easily be seen by condition \eqref{reform}, for instance when $\tau_1\geq \frac{1}{p_1}$ and $\tau_2 < \frac{1}{p_2}$. In parallel to \eqref{010319} and Proposition \ref{yy02}, this evidences the fact that the parameter $\tau$ modifies  indeed  the smoothness of these spaces.
\end{remark}

\begin{remark}\label{comp-hybrid}
  We briefly return to our Remark~\ref{T-hybrid} which referred to the coincidence of Triebel's hybrid spaces $L^r\A$ with the spaces $\at$ if $\tau=\frac1p+\frac{r}{d}$. Obviously, defining both by restriction to $\Omega$, this is transferred to spaces on domains. In that sense Theorem~\ref{comp-tau-u} can be formulated as follows: let $s_i\in \real$, $0<q_i\leq\infty$, $0<p_i< \infty$, $-\frac{d}{p_i}\leq r_i<\infty$, $i=1,2$. Then the embedding
\begin{equation}
	 \id_{L}: L^{r_1}\Ae(\Omega )\hookrightarrow L^{r_2}\Az(\Omega )
\end{equation}
is compact if, and only if,
the following condition holds
\begin{align}
  {s_1-s_2}> & \max\left\{(r_2)_+-(r_1)_+, -r_1+\max\left\{r_2,\frac{p_1}{p_2}\min\{r_1,0\}\right\}\right\},\label{hybrid-1}
  \intertext{i.e.,}
  s_1-s_2 & > \begin{cases} r_2-r_1, & \text{if}\quad r_2\geq 0,\\ -r_1, &\text{if}\quad r_2<0, r_1\geq 0, \\
\max\left\{0, -r_1 + \max\left\{r_2, \frac{p_1}{p_2}r_1\right\}\right\},
    & \text{if}\quad r_2<0, r_1< 0.\nonumber
\end{cases}
\end{align}
\end{remark}


We now collect some immediate consequences of the above compactness result. We begin with the case $\tau_1=\tau_2=\tau\geq 0$.

\begin{corollary}  \label{comp-tau-eq}
  Let  $s_i\in \real$, $0<q_i\leq\infty$, $0<p_i\leq \infty$ (with $p_i<\infty$ in case of $A=F$), $i=1,2$, and $\tau\geq 0$.
    The embedding
\begin{equation} 
	 \id_{\tau}: A^{s_1,\tau}_{p_1,q_1}(\Omega )\hookrightarrow A^{s_2,\tau}_{p_2,q_2}(\Omega )
\end{equation}
is compact if, and only if,
  \begin{equation}\label{same-tau}
    \frac{s_1-s_2}{d} > \begin{cases}
      \frac{1}{p_1}-\frac{1}{p_2}, & \text{if}\quad p_1<p_2, \\[1ex]
     \min\left\{0, \frac{1}{p_1}-\min\left\{\tau,\frac{1}{p_2}\right\}\right\}, &\text{if}\quad  p_1\geq p_2.
    \end{cases}
  \end{equation}
\end{corollary}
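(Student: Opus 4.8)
The plan is to obtain the corollary as a direct specialisation of Theorem~\ref{comp-tau-u}, so no compactness argument needs to be repeated. Setting $\tau_1=\tau_2=\tau$ in \eqref{tau-comp-u2}, the embedding $\id_\tau$ is compact if, and only if, $\frac{s_1-s_2}{d}>\gamma(\tau,\tau,p_1,p_2)$, with $\gamma$ from \eqref{gamma}. Hence the entire task reduces to the purely algebraic simplification of
\[
\gamma(\tau,\tau,p_1,p_2)=\max\left\{\left(\tau-\tfrac{1}{p_2}\right)_+-\left(\tau-\tfrac{1}{p_1}\right)_+,\ \tfrac{1}{p_1}-\tau-\min\left\{\tfrac{1}{p_2}-\tau,\ \tfrac{1}{p_2}(1-p_1\tau)_+\right\}\right\}
\]
into the right-hand side of \eqref{same-tau}. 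I would split the analysis according to whether $p_1<p_2$ or $p_1\ge p_2$, using throughout the convention $\frac1p=0$ for $p=\infty$.

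For the case $p_1<p_2$ I would show that the second entry of the outer maximum already equals $\frac{1}{p_1}-\frac{1}{p_2}$ and dominates the first. The key observation is that $\frac{p_1}{p_2}<1$ forces $\frac{1}{p_2}(1-p_1\tau)_+\ge\frac{1}{p_2}-\tau$ for every $\tau\ge0$: this is clear for $\tau\ge\frac1{p_1}$ (the left side is $0$, the right side negative), while for $\tau<\frac1{p_1}$ the difference is $\tau(1-\frac{p_1}{p_2})\ge0$. Consequently the inner minimum selects $\frac1{p_2}-\tau$, and the second entry becomes $\frac1{p_1}-\tau-(\frac1{p_2}-\tau)=\frac1{p_1}-\frac1{p_2}$. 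Since the map $x\mapsto x_+$ is $1$-Lipschitz, the first entry satisfies $(\tau-\frac1{p_2})_+-(\tau-\frac1{p_1})_+\le\frac1{p_1}-\frac1{p_2}$, so the maximum equals $\frac1{p_1}-\frac1{p_2}$, which is the first line of \eqref{same-tau}.

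For the case $p_1\ge p_2$ the inequality $\frac{p_1}{p_2}\ge1$ reverses this comparison, so for $\tau<\frac1{p_1}$ the inner minimum instead selects $\frac{1}{p_2}(1-p_1\tau)_+$. I would then run through the three subranges $\tau\ge\frac1{p_2}$, $\frac1{p_1}\le\tau<\frac1{p_2}$ and $\tau<\frac1{p_1}$, obtaining $\gamma=\frac1{p_1}-\frac1{p_2}$, $\gamma=\frac1{p_1}-\tau$ and $\gamma=0$ respectively, and finally verify that all three values are reproduced by the single formula $\min\{0,\frac1{p_1}-\min\{\tau,\frac1{p_2}\}\}$. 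The one point demanding care — and the main (if modest) obstacle — is the last subrange: there the second entry of the maximum equals $\frac1{p_1}-\frac1{p_2}+\tau(\frac{p_1}{p_2}-1)$, which is $\le0$ (it vanishes exactly at $\tau=\frac1{p_1}$ and is nondecreasing in $\tau$), while the first entry vanishes, so it is the outer maximum with $0$ that lifts $\gamma$ back to $0$; one must confirm this is faithfully encoded by the outer $\min\{0,\cdot\}$ in \eqref{same-tau}. Checking the boundary values $\tau=\frac1{p_1}$, $\tau=\frac1{p_2}$ and the limiting cases $p_1=\infty$ or $p_2=\infty$ then completes the simplification and hence the proof.
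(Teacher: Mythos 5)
Your proposal is correct and follows exactly the paper's route: the paper's entire proof of this corollary is ``We apply Theorem~\ref{comp-tau-u} with $\tau_1=\tau_2$,'' and your case-by-case simplification of $\gamma(\tau,\tau,p_1,p_2)$ into the right-hand side of \eqref{same-tau} is the (correctly executed) algebra that the paper leaves implicit.
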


\begin{proof}
We apply Theorem~\ref{comp-tau-u} with $\tau_1=\tau_2$.
\end{proof}

\begin{remark}
The result is well-known for $\tau=0$, where \eqref{same-tau} reads as $s_1-s_2>d(\frac{1}{p_1}-\frac{1}{p_2})_+$. We find it interesting that for $\tau>0$ there is not a simple `$\tau$-shift', but an interplay between $\tau$ and the $p_i$-parameters -- however, only when $p_1\geq p_2$. This again refers to the hybrid role played by the additional $\tau$-parameters and makes it even more obvious that it influences {\em both} the smoothness {\em and} the  integrability parameters $s_i$ and $p_i$, respectively.
\end{remark}

Now we deal with special target spaces. In the case of $L_\infty(\Omega)$  and $\bmo(\Omega)$ we have the following result.
\begin{corollary}  \label{MorreyintoLinfty}
  Let  $s\in \real$, $0<p\le u<\infty$ and $q\in(0,\infty]$.  Then the following conditions are equivalent
  \bli
  \item[{\upshape\bfseries (i)\hfill}] the embedding $ \MA(\Omega) \hookrightarrow L_{\infty}(\Omega)$ is compact,
  \item[{\upshape\bfseries (ii)\hfill}] the embedding $\MA(\Omega )\hookrightarrow \bmo(\Omega)$ is compact,
  \item[{\upshape\bfseries (iii)\hfill}]    the following inequality holds $s> \frac{d}{u}$.
  \eli
\end{corollary}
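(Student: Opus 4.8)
The plan is to prove the three statements equivalent by running the cycle $\mathrm{(iii)}\Rightarrow\mathrm{(i)}\Rightarrow\mathrm{(ii)}\Rightarrow\mathrm{(iii)}$, reducing each step to a compact embedding between \emph{classical} spaces whose threshold is governed by the quantity $\delta_+$ of \eqref{delta+}. Two auxiliary embeddings, valid on $\Omega$ by Remark~\ref{emb-Omega}, serve as a ceiling and a floor for $\MA(\Omega)$ and localise the critical value at $s=\frac{d}{u}$:
\[
A^s_{u,q}(\Omega)\hookrightarrow \MA(\Omega)\hookrightarrow B^{s-\frac{d}{u}}_{\infty,\infty}(\Omega).
\]
The right-hand embedding follows by combining \eqref{N-BT-emb} (for $\mathcal N$) or \eqref{fte} (for $\mathcal E$), which give $\MA(\Omega)\hookrightarrow A^{s,\frac1p-\frac1u}_{p,q}(\Omega)$, with \eqref{010319}, since $s+d\big((\frac1p-\frac1u)-\frac1p\big)=s-\frac{d}{u}$. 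The left-hand embedding follows by lifting the Morrey inclusion $L_u(\rn)=\cM_{u,u}(\rn)\hookrightarrow\cM_{u,p}(\rn)$ from \eqref{LinM} to the smoothness level, i.e.\ $A^s_{u,q}(\Omega)=\mathcal A^s_{u,u,q}(\Omega)\hookrightarrow\mathcal A^s_{u,p,q}(\Omega)=\MA(\Omega)$.

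For $\mathrm{(iii)}\Rightarrow\mathrm{(i)}$ I would assume $s>\frac{d}{u}$, fix $\sigma$ with $0<\sigma<s-\frac{d}{u}$, and factor the ceiling embedding as
\[
\MA(\Omega)\hookrightarrow B^{s-\frac{d}{u}}_{\infty,\infty}(\Omega)\hookrightarrow B^{\sigma}_{\infty,\infty}(\Omega)\hookrightarrow L_\infty(\Omega),
\]
where the middle embedding is compact by Remark~\ref{remark-ek-ak-class} (here $\delta_+=s-\frac{d}{u}-\sigma>0$) and the last one is continuous because $\sigma>0$. A composition with a compact factor is compact, so $\id:\MA(\Omega)\to L_\infty(\Omega)$ is compact.

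The step $\mathrm{(i)}\Rightarrow\mathrm{(ii)}$ is immediate once one records the continuous embedding $L_\infty(\Omega)\hookrightarrow\bmo(\Omega)$ (extend by zero to $\rn$ and use $\|g\|_{\bmo}\ls\|g\|_{L_\infty}$): the compact map $\MA(\Omega)\to L_\infty(\Omega)$ composed with this continuous map stays compact. For $\mathrm{(ii)}\Rightarrow\mathrm{(iii)}$ I would identify $\bmo(\Omega)=B^{0,1/2}_{2,2}(\Omega)$ via \eqref{ft=bmo} and \eqref{ftbt}, so that \eqref{010319} yields $\bmo(\Omega)\hookrightarrow B^0_{\infty,\infty}(\Omega)$. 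Assuming $\id:\MA(\Omega)\to\bmo(\Omega)$ compact and inserting the floor embedding, the composition
\[
A^s_{u,q}(\Omega)\hookrightarrow \MA(\Omega)\hookrightarrow\bmo(\Omega)\hookrightarrow B^0_{\infty,\infty}(\Omega)
\]
is compact; by the well-known classical criterion (cf.\ Remark~\ref{remark-ek-ak-class}) this forces $\delta_+=s-\frac{d}{u}>0$, closing the cycle. For $A=F$ one first inserts a classical Besov space $B^s_{u,q_0}(\Omega)\hookrightarrow A^s_{u,q}(\Omega)$ via \eqref{elem-tau}, so that the terminal compact embedding is between classical Besov spaces and the criterion applies verbatim.

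The only genuinely delicate point is the choice of the sandwiching spaces so that \emph{both} the approach towards the smaller target $L_\infty$ and the escape from the larger target $\bmo$ collapse onto exactly the same classical threshold $s>\frac{d}{u}$; this is what makes the two a priori different targets yield identical compactness conditions. Everything else is the routine verification of the floor and ceiling embeddings and the standard fact that composing continuous maps with a compact one preserves compactness; the case $p=u$ (where $\MA(\Omega)=A^s_{u,q}(\Omega)$) is just the classical Sobolev embedding and needs no separate treatment.
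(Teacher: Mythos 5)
Your argument is correct, but it follows a genuinely different route from the paper. The paper disposes of the equivalence (i)$\Leftrightarrow$(iii) by citing the earlier compactness results of \cite{hs12b,hs14} directly, and obtains (ii)$\Leftrightarrow$(iii) as an immediate application of the main Theorem~\ref{comp-tau-u}, using the identifications $\bmo(\Omega)=F^{0,1/r}_{r,2}(\Omega)$ and $\MF(\Omega)=F^{s,\tau}_{p,q}(\Omega)$ with $\tau=\frac1p-\frac1u$, plus \eqref{elem} for the $\mathcal N$-case. You instead run a self-contained cycle $\mathrm{(iii)}\Rightarrow\mathrm{(i)}\Rightarrow\mathrm{(ii)}\Rightarrow\mathrm{(iii)}$ built on the sandwich $A^s_{u,q}(\Omega)\hookrightarrow\MA(\Omega)\hookrightarrow B^{s-\frac{d}{u}}_{\infty,\infty}(\Omega)$ and the classical criterion $\delta_+>0$ of Remark~\ref{remark-ek-ak-class}; both the floor (monotonicity \eqref{LinM} lifted to the smoothness scale) and the ceiling (\eqref{N-BT-emb}/\eqref{fte} combined with \eqref{010319}) are legitimate and are in fact the same devices the paper uses elsewhere, e.g.\ in \eqref{pu-2-infty-1} and \eqref{pu-2-infty-3}. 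Your approach buys independence from Theorem~\ref{comp-tau-u} and from the external references — in particular the cyclic structure reproves (i)$\Leftrightarrow$(iii) rather than quoting it — at the cost of being longer than the paper's two-line reduction; the paper's approach is the natural one once the general theorem is available. The only point worth making explicit is that Remark~\ref{remark-ek-ak-class} as stated only records the sufficiency of $\delta_+>0$ for compactness of the classical embedding, so your step $\mathrm{(ii)}\Rightarrow\mathrm{(iii)}$ tacitly invokes the (well-known, and repeatedly used in the paper) necessity of that condition; this is harmless but should be flagged.
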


\begin{proof}
  The equivalence of (i) and (iii) was proved in \cite{hs12b,hs14}, whereas the equivalence of (ii) and (iii) follows from Theorem~\ref{comp-tau-u} since $\bmo(\Omega)= F^{0,1/r}_{r,2}(\Omega)$, $0<r<\infty$, and $\MF(\Omega) = F^{s,\tau}_{p,q}(\Omega)$, $0\leq \tau=\frac1p-\frac1u$. This covers the case $\MA=\MF$. The extension to the case $\MA=\MB$ is done via \eqref{elem}.
\end{proof}

The counterpart of Corollary~\ref{MorreyintoLinfty} for spaces of type $\at$ reads as follows.

\begin{corollary}\label{atauinLinf}
 Let $s\in\real$, $\tau\geq 0$, $0< p,q\leq\infty$ (with $p<\infty$ if $A=F$). Then the following conditions are equivalent
  \bli
  \item[{\upshape\bfseries (i)\hfill}] the embedding $ \at(\Omega) \hookrightarrow L_{\infty}(\Omega)$ is compact,
  \item[{\upshape\bfseries (ii)\hfill}] the embedding $\at(\Omega )\hookrightarrow \bmo(\Omega)$ is compact,
  \item[{\upshape\bfseries (iii)\hfill}]  the following inequality holds $s> d\left(\frac1p-\tau\right)$.
 \eli
\end{corollary}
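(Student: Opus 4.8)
The plan is to prove the equivalence of the three conditions in Corollary~\ref{atauinLinf} by establishing a cycle of implications, leaning heavily on Corollary~\ref{MorreyintoLinfty} and the coincidence results relating the $\at$-scale to the Morrey scale $\MA$ and to classical spaces. The natural strategy mirrors the proof of Corollary~\ref{MorreyintoLinfty}: identify $\bmo(\Omega)$ as a Triebel-Lizorkin-type space via \eqref{ft=bmo}, namely $\bmo(\Omega)=F^{0,1/r}_{r,2}(\Omega)$ for any $0<r<\infty$, and then read off the compactness criterion for the embedding into $\bmo$ directly from Theorem~\ref{comp-tau-u}.

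First I would treat the implication (ii)$\Rightarrow$(iii). Writing $\bmo(\Omega)=F^{0,1/r}_{r,2}(\Omega)$, the target space has parameters $s_2=0$, $\tau_2=1/r=1/p_2$, and $q_2=2<\infty$. Thus we are precisely in the situation $\tau_2=\frac{1}{p_2}$ with $q_2<\infty$, which corresponds to Step~3 of the proof of Theorem~\ref{comp-tau-u}, where condition \eqref{reform} reads $s_1-s_2>d\bigl(\frac{1}{p_1}-\tau_1\bigr)$. Applying this with the source space $\at(\Omega)$ (so $s_1=s$, $p_1=p$, $\tau_1=\tau$) and $s_2=0$ gives compactness of $\at(\Omega)\hookrightarrow\bmo(\Omega)$ if and only if $s>d\bigl(\frac1p-\tau\bigr)$, which is exactly (iii). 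For $A=B$ I would first invoke the elementary embedding \eqref{elem-tau} relating $\bt$ to $\ft$ on $\Omega$, so that compactness into $\bmo$ can again be reduced to the $F$-case; since $r>0$ is free, choosing it conveniently lets the argument go through for all admissible $p$.

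For the remaining implications I would establish (iii)$\Rightarrow$(i) and (i)$\Rightarrow$(ii). The step (iii)$\Rightarrow$(i) follows by inserting an intermediate space: if $s>d\bigl(\frac1p-\tau\bigr)$ one can choose $\varepsilon>0$ small so that $s-\varepsilon>d\bigl(\frac1p-\tau\bigr)$, use \eqref{elem-0-t} to get the compact-inducing embedding $\at(\Omega)\hookrightarrow A^{s-\varepsilon,\tau}_{p,r}(\Omega)$ and then appeal to the known continuous embedding of $\at$ into $L_\infty(\Omega)$ under strict inequality (the non-compact endpoint result underlying Corollary~\ref{MorreyintoLinfty}); the $\varepsilon$-gain in smoothness upgrades continuity to compactness. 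The implication (i)$\Rightarrow$(ii) is then immediate from the continuous embedding $L_\infty(\Omega)\hookrightarrow\bmo(\Omega)$, which holds on bounded domains, since composing a compact operator with a continuous one yields a compact operator. This closes the cycle (iii)$\Rightarrow$(i)$\Rightarrow$(ii)$\Rightarrow$(iii).

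The main obstacle I anticipate is the careful bookkeeping of the limiting parameter configurations, specifically verifying that representing $\bmo(\Omega)$ as $F^{0,1/p_2}_{p_2,2}(\Omega)$ genuinely lands in the $\tau_2=\frac{1}{p_2}$, $q_2<\infty$ branch of \eqref{reform} and that the resulting criterion is independent of the auxiliary choice of $r$ (equivalently $p_2$). One must also make sure the reduction from $A=B$ to $A=F$ via \eqref{elem-tau} does not distort the threshold $s>d\bigl(\frac1p-\tau\bigr)$, which it does not since \eqref{elem-tau} only changes the fine index $q$ and not $s$, $p$, or $\tau$. Once these identifications are in place, each implication is a one-line consequence of Theorem~\ref{comp-tau-u}, the coincidence \eqref{ft=bmo}, and the elementary mapping properties of compact operators.
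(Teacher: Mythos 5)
Your overall architecture is sound and genuinely different from the paper's: you close a cycle (iii)$\Rightarrow$(i)$\Rightarrow$(ii)$\Rightarrow$(iii), using $L_\infty(\Omega)\hookrightarrow\bmo(\Omega)$ for the second implication, whereas the paper proves (i)$\Leftrightarrow$(iii) and (ii)$\Leftrightarrow$(iii) independently. Your step (iii)$\Rightarrow$(i) -- lose an $\varepsilon$ of smoothness compactly inside the $\at$-scale (which Theorem~\ref{comp-tau-u} itself licenses, since $\critical=0$ when the parameters $p,\tau$ coincide) and then apply the continuity criterion $\at(\Omega)\hookrightarrow L_\infty(\Omega)$ for $s>d(\frac1p-\tau)$ from \cite{HMSS} -- is cleaner than the paper's route, which goes through the approximation-number asymptotics of \cite{YHMSY} for $p\geq2$ and a Sobolev-type embedding to reach $0<p<2$. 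Your identification of $\bmo(\Omega)=F^{0,1/r}_{r,2}(\Omega)$ and the resulting branch $\tau_2=\frac{1}{p_2}$, $q_2<\infty$ of \eqref{reform}, giving the threshold $s>d(\frac1p-\tau)$ independently of $r$, is exactly what the paper does for $A=F$.

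There is, however, a concrete gap in your reduction of the $B$-case to the $F$-case in (ii)$\Rightarrow$(iii). To extract necessity from the compactness of $\bt(\Omega)\hookrightarrow\bmo(\Omega)$ you must place an $F$-type space \emph{before} $\bt(\Omega)$. The only usable half of \eqref{elem-tau} is $F^{s,\tau}_{p,v}(\Omega)\hookrightarrow B^{s,\tau}_{p,\max\{p,v\}}(\Omega)$, and $B^{s,\tau}_{p,\max\{p,v\}}(\Omega)\hookrightarrow B^{s,\tau}_{p,q}(\Omega)$ requires $\max\{p,v\}\leq q$; this is impossible for every $v$ when $q<p$, so ``choosing $r$ conveniently'' does not rescue that range. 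Falling back on \eqref{elem-0-t} to insert $F^{s+\varepsilon,\tau}_{p,2}(\Omega)\hookrightarrow\bt(\Omega)$ only yields $s+\varepsilon>d(\frac1p-\tau)$ for all $\varepsilon>0$, hence the non-strict inequality $s\geq d(\frac1p-\tau)$, which does not exclude the endpoint. The repair is to use the coincidence \eqref{ftbt}: $\bmo(\Omega)=F^0_{\infty,2}(\Omega)=B^{0,1/2}_{2,2}(\Omega)$, so that Theorem~\ref{comp-tau-u} applies directly with $A=B$, source $\bt(\Omega)$ and target $B^{0,1/2}_{2,2}(\Omega)$ (again the branch $\tau_2=\frac{1}{p_2}$, $q_2=2<\infty$), giving the strict threshold $s>d(\frac1p-\tau)$ for all $0<q\leq\infty$. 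With that substitution your argument is complete.
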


\begin{proof}
{\em Step 1}. We prove the equivalence of (i) and (iii).
  The case $\tau=0$ is well-known, so we assume $\tau>0$.
  Note that the continuity of that embedding was studied in \cite[Proposition~2.18]{HMSS} already, with the outcome that $\at(\Omega) \hookrightarrow L_\infty(\Omega)$ if, and only if, $s>d(\frac1p-\tau)$. Hence (i) implies (iii) and we are left to show the converse.
Assume first, in addition, that $p\geq 2$, and $d(\frac1p-\tau)<s<d(\frac1p-\tau)+1$. We dealt with that situation in \cite[Corollary~5.10]{YHMSY} and characterised the asymptotic behaviour of  approximation numbers of the embedding $\at(\Omega)\hookrightarrow L_\infty(\Omega)$. In particular, that outcome implies (i). The additional restriction for $s$ (from above) can immediately be removed in view of \eqref{elem-0-t} (adapted to spaces on bounded domains). Now let $0<p<2$ and $s>d(\frac1p-\tau)$. We use a Sobolev-type embedding: choose $\sigma=s-d(\frac1p-\frac12)<s$, then $A^{\sigma,\tau}_{2,q}(\Omega)\hookrightarrow L_\infty(\Omega)$ compactly by our previous argument, and $\at(\Omega)\hookrightarrow A^{\sigma,\tau}_{2,q}(\Omega)$ by the Sobolev-type embedding, cf. \cite[Propositions~3.4, 3.8]{YHMSY}  (adapted to spaces on domains).

{\em Step 2}. We prove the equivalence of (ii) and (iii). However, in case of $A=F$ this coincides with Theorem~\ref{comp-tau-u} for $s_1=s$, $s_2=0$, $p_1=p_2=p$, $\tau_1=\tau$, $\tau_2=\frac1p$, $q_1=q$ and $q_2=q$ since $\bmo(\Omega)= F^{0,1/p}_{p,2}(\Omega)$. 
The extension to the case $A=B$ results from \eqref{elem-0-t} and \eqref{elem-tau}.
\end{proof}

\begin{remark}
In case of $\tau>\frac1p$  or  $\tau=\frac1p$ and $q=\infty$, Corollary~\ref{atauinLinf} is well-known as a compact embedding within the scale of Besov spaces $\B(\Omega)$, in view of Proposition~\ref{yy02}. If $0\leq \tau<\frac1p$, then Corollaries~\ref{MorreyintoLinfty} and \ref{atauinLinf} coincide for $\mathcal{E}$-spaces, using \eqref{fte}. In view of our continuity result   \cite[Proposition~2.18]{HMSS} the above outcome can  be reformulated for $\tau>0$ such that $\at(\Omega)\hookrightarrow L_\infty(\Omega)$ is compact if, and only if, it is bounded. This is different from the case $\tau=0$.
\end{remark}

We finally formulate the corresponding results for compact embeddings into $L_r(\Omega)$, $1\leq r<\infty$, which can be seen as the counterparts of Corollaries~\ref{MorreyintoLinfty} and \ref{atauinLinf} where $r=\infty$.

\begin{corollary}\label{atauinLr}
 Let $s\in\real$, $0< p,q\leq\infty$ (with $p<\infty$ if $A=F$), $1\leq r<\infty$.
  \bli
  \item[{\upshape\bfseries (i)\hfill}] Let $u\in [p, \infty)$ (or $p=u=\infty$ if $\mathcal{A}=\mathcal{N}$). Then
\[
\MA(\Omega)\hookrightarrow L_r(\Omega) \quad\text{is compact\quad if, and only if,}\quad
s>\frac{d}{u}\left(1-\frac{p}{r}\right)_+\ .
\]
  \item[{\upshape\bfseries (ii)\hfill}] Let $\tau\geq 0$. Then
\[
\at(\Omega)\hookrightarrow L_r(\Omega)\]
\text{is compact if, and only if,}
\begin{equation}\label{lim-12}
  s>d \begin{cases} \left(\frac1p-\tau\right) & \text{if}\quad \tau\geq \frac1p, \\[1ex] \left(\frac1p-\tau\right)\left(1-\frac{p}{r}\right)_+ & \text{if}\quad \tau\leq \frac1p.\end{cases}
\end{equation}
 \eli
\end{corollary}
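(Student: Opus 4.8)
The plan is to bracket the target $L_r(\Omega)$ between two Besov spaces and then invoke the two compactness theorems already at our disposal, Theorem~\ref{comp} for part~(i) and Theorem~\ref{comp-tau-u} for part~(ii). The starting observation, valid for every $1\le r<\infty$ (in particular at the endpoint $r=1$, which is why no separate discussion is needed), is the classical sandwich
\begin{equation*}
B^0_{r,1}(\Omega)\hookrightarrow L_r(\Omega)\hookrightarrow B^0_{r,\infty}(\Omega),
\end{equation*}
which transfers to $\Omega$ from $\rn$ by restriction. Since compactness is inherited under composition with continuous maps on either side, the compactness of an embedding $X(\Omega)\hookrightarrow L_r(\Omega)$ is forced from above through $X(\Omega)\hookrightarrow B^0_{r,\infty}(\Omega)$ (necessity) and produced from below through $X(\Omega)\hookrightarrow B^0_{r,1}(\Omega)$ (sufficiency). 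Both Besov targets have the same smoothness and integrability $(0,r)$, and the relevant thresholds do not depend on the fine index $q_2$, so the two ends yield one and the same condition.

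For part~(i) I would write $B^0_{r,1}(\Omega)=\mathcal{N}^0_{r,r,1}(\Omega)$ and $B^0_{r,\infty}(\Omega)=\mathcal{N}^0_{r,r,\infty}(\Omega)$ and apply Theorem~\ref{comp}. When $\mathcal{A}=\mathcal{N}$ the comparison is already of $\mathcal{N}$-type; for $\mathcal{A}=\mathcal{E}$ I would first bracket the source by Besov--Morrey spaces, $\mathcal{N}^s_{u,p,\min\{p,q\}}(\Omega)\hookrightarrow \mathcal{E}^s_{u,p,q}(\Omega)\hookrightarrow \mathcal{N}^s_{u,p,\infty}(\Omega)$, via \eqref{elem} (and \eqref{elem-tau} with $\tau=0$ in the classical case $p=u$), reducing everything to $\mathcal{N}$-to-$\mathcal{N}$ comparisons. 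It then remains to check that, with $s_2=0$ and $u_2=p_2=r$, the right-hand side of \eqref{bd3acomp} collapses: since $p\le u$ one has $\frac{p}{u}\bigl(\frac1p-\frac1r\bigr)\ge \frac1u-\frac1r$ and $\frac{p}{u}\bigl(\frac1p-\frac1r\bigr)=\frac1u\bigl(1-\frac pr\bigr)$, so the maximum equals $\frac1u\bigl(1-\frac pr\bigr)_+$, which is precisely the asserted condition $s>\frac du\bigl(1-\frac pr\bigr)_+$. The special case $p=u=\infty$ for $\mathcal{A}=\mathcal{N}$ is the $s>0$ alternative of Theorem~\ref{comp} and agrees with the formula read with $\frac1u=0$.

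For part~(ii) the same bracketing works with $B^0_{r,1}(\Omega)=B^{0,0}_{r,1}(\Omega)$ and $B^0_{r,\infty}(\Omega)=B^{0,0}_{r,\infty}(\Omega)$ viewed as Besov-type targets, so that Theorem~\ref{comp-tau-u} applies. To stay within one scale I would reduce an $F$-source to the $B$-scale by \eqref{elem-tau}, $B^{s,\tau}_{p,\min\{p,q\}}(\Omega)\hookrightarrow \ft(\Omega)\hookrightarrow B^{s,\tau}_{p,\max\{p,q\}}(\Omega)$, leaving only $B$-to-$B$ comparisons in all cases (for $A=B$ this step is the identity). Substituting $s_2=0$, $p_2=r$, $\tau_2=0$ into \eqref{gamma} gives $\bigl(\tau_2-\frac1{p_2}\bigr)_+=0$ and $\min\bigl\{\frac1r,\tfrac1r(1-p\tau)_+\bigr\}=\frac1r\min\{1,(1-p\tau)_+\}$, so that $\critical$ becomes $\max\bigl\{-(\tau-\tfrac1p)_+,\ \tfrac1p-\tau-\tfrac1r\min\{1,(1-p\tau)_+\}\bigr\}$; distinguishing $\tau\ge\frac1p$ (where this equals $\frac1p-\tau$) from $\tau\le\frac1p$ (where it equals $(\frac1p-\tau)(1-\frac pr)_+$) reproduces exactly the two lines of \eqref{lim-12}. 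As \eqref{gamma} is independent of $q_2$, the $B^0_{r,1}$ and $B^0_{r,\infty}$ ends give the same threshold, and necessity and sufficiency close as above.

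The routine part is the two algebraic simplifications just indicated; the only genuine points to get right are that the Besov sandwich holds uniformly down to $r=1$ (so the endpoint needs no extra work) and that the source spaces, which are a priori of $\mathcal{E}$- or $F$-type, must be re-expressed through $\mathcal{N}$- or $B$-type spaces by \eqref{elem} and \eqref{elem-tau} before the theorems can be applied. I expect this cross-scale reduction, rather than any estimate, to be the main thing to handle with care.
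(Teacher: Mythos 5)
Your proposal is correct and follows essentially the same route as the paper: the sandwich $B^0_{r,1}(\Omega)\hookrightarrow L_r(\Omega)\hookrightarrow B^0_{r,\infty}(\Omega)$ combined with Theorem~\ref{comp-tau-u} (using the $q$-independence of \eqref{tau-comp-u2}) for part (ii), and the reduction of the $F$- resp.\ $\mathcal{E}$-cases via \eqref{elem-tau} and \eqref{elem}. The only difference is that for part (i) the paper simply cites the earlier result from \cite{hs12b} for $\mathcal{A}=\mathcal{N}$, whereas you re-derive it from Theorem~\ref{comp}; your algebraic reduction of \eqref{bd3acomp} and of \eqref{gamma} to the stated thresholds is accurate.
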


\begin{proof}
  Case (i) was already shown in \cite[Proposition~5.3]{hs12b} (for $\mathcal{A}=\mathcal{N}$). In view of \eqref{elem} and the independence of the condition with respect to $q$ the counterpart for $\mathcal{A}=\mathcal{E}$ follows (and slightly extends our recent result in \cite[Corollary~5.4]{hs14} to $r=1$). We come to (ii) and start with the case $A=B$. Note that $B^0_{r,1}(\Omega)\hookrightarrow L_r(\Omega)\hookrightarrow B^0_{r,\infty }(\Omega)$, $1\leq r\leq\infty$, so we apply  Theorem~\ref{comp-tau-u} for $s_1=s$, $s_2=0$, $p_1=p$, $p_2=r$, $\tau_1=\tau$, $\tau_2=0$, $q_1=q$, and $q_2=1$ or $q_2=\infty$ to obtain the necessary and sufficient conditions. Again we benefit from the independence of \eqref{tau-comp-u2} with respect to the $q$-parameters. Finally, the case $A=F$ follows by \eqref{elem-tau} again.
\end{proof}

\begin{corollary}  \label{cont-tau}
  Let  $s_i\in \real$, $0<q_i\leq\infty$, $0<p_i\leq \infty$ (with $p_i<\infty$ in case of $A=F$), $\tau_i\geq 0$, $i=1,2$.
    There is no continuous embedding
\[
	 \id_{\tau}: \ate(\Omega )\hookrightarrow \atz(\Omega )
\]
if
\begin{equation}\label{no-cont-tau}
  \frac{s_1-s_2}{d} < \critical.
\end{equation}
\end{corollary}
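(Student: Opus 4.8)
The plan is to derive the non-continuity statement directly from the compactness result already established in Theorem~\ref{comp-tau-u}, exploiting the \emph{strict} gap between the compactness threshold \eqref{tau-comp-u2} and the regime \eqref{no-cont-tau} that lies strictly below it. The key observation is that whenever \eqref{no-cont-tau} holds, one can insert an intermediate space whose parameters satisfy the compactness condition \eqref{tau-comp-u2}, so that a \emph{continuous} embedding $\id_\tau$ would factor through a compact one, forcing $\id_\tau$ itself to be compact; but compactness is excluded precisely when the strict inequality \eqref{tau-comp-u2} fails, and a continuous but non-compact embedding cannot coexist with the sandwiching we construct.

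More concretely, first I would suppose, for contradiction, that $\id_\tau:\ate(\Omega)\hookrightarrow\atz(\Omega)$ \emph{is} continuous while \eqref{no-cont-tau} holds. The strategy is then to raise the target smoothness slightly: choose $s_2'$ with $s_2<s_2'<s_1-d\,\critical$, which is possible precisely because the gap in \eqref{no-cont-tau} is strict. By the elementary embedding \eqref{elem-0-t} (in the form adapted to $\Omega$, recall Remark~\ref{emb-Omega}), one has the continuous inclusion $A^{s_2',\tau_2}_{p_2,q_2}(\Omega)\hookrightarrow\atz(\Omega)$, since $s_2'>s_2$. On the other hand, the choice $s_2'<s_1-d\,\critical$ means $\frac{s_1-s_2'}{d}>\critical$, so by Theorem~\ref{comp-tau-u} the embedding $\ate(\Omega)\hookrightarrow A^{s_2',\tau_2}_{p_2,q_2}(\Omega)$ is compact. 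Composing the (assumed continuous) $\id_\tau$ does not directly give the contradiction, so instead I would compose in the compact direction: the factorisation $\ate(\Omega)\hookrightarrow A^{s_2',\tau_2}_{p_2,q_2}(\Omega)\hookrightarrow\atz(\Omega)$ exhibits $\id_\tau$ as the composition of a compact map with a continuous one, whence $\id_\tau$ itself is compact.

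This is where the actual contradiction is extracted. Since $\id_\tau$ turns out to be compact, Theorem~\ref{comp-tau-u} forces $\frac{s_1-s_2}{d}>\critical$, which directly contradicts the standing hypothesis \eqref{no-cont-tau}. Therefore no continuous embedding $\id_\tau$ can exist under \eqref{no-cont-tau}, as claimed.

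I expect the only genuinely delicate point to be verifying that the chosen intermediate smoothness $s_2'$ is admissible in \emph{all} parameter constellations covered by the corollary, in particular that the parameters $(p_2,q_2,\tau_2)$ together with $s_2'$ still define a legitimate space of the same type, and that \eqref{elem-0-t} and Theorem~\ref{comp-tau-u} apply verbatim with the shifted smoothness (the value $\critical$ depends only on $\tau_i,p_i$, not on the $s_i$, so raising $s_2$ to $s_2'$ leaves the threshold unchanged, which is exactly what makes the argument clean). The strictness of \eqref{no-cont-tau} is essential: it is what guarantees the existence of $s_2'$ with $s_2<s_2'<s_1-d\,\critical$, and hence room to insert a space on the compact side of the threshold. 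All other steps are routine applications of the multiplicativity of compactness under composition with continuous maps and of the already-proved Theorem~\ref{comp-tau-u}.
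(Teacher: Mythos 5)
Your construction breaks down at the very first step: under the hypothesis \eqref{no-cont-tau} the interval from which you want to pick $s_2'$ is empty. Indeed, \eqref{no-cont-tau} says $s_1-s_2<d\,\critical$, i.e. $s_1-d\,\critical<s_2$, so there is no $s_2'$ with $s_2<s_2'<s_1-d\,\critical$; the strictness of \eqref{no-cont-tau} creates room on the \emph{other} side of $s_2$, not where you need it. A structural warning sign is that your factorisation $\ate(\Omega)\hookrightarrow A^{s_2',\tau_2}_{p_2,q_2}(\Omega)\hookrightarrow\atz(\Omega)$ never actually uses the assumed continuity of $\id_\tau$: if both links existed, they would prove that $\id_\tau$ is compact \emph{unconditionally}, which is impossible whenever $\frac{s_1-s_2}{d}\le\critical$ by the necessity part of Theorem~\ref{comp-tau-u}. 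The two requirements on $s_2'$ (continuity of the second link needs $s_2'>s_2$; compactness of the first needs $s_2'<s_1-d\,\critical$) are incompatible precisely in the regime the corollary addresses.

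The underlying idea --- deducing non-continuity from the compactness characterisation alone --- can be salvaged, but you must perturb on the \emph{source} side and pass to a limit. Assume $\id_\tau$ is continuous and take any $\ve>0$. Since $\gamma(\tau_1,\tau_1,p_1,p_1)=0$, Theorem~\ref{comp-tau-u} shows that $A^{s_1+\ve,\tau_1}_{p_1,q_1}(\Omega)\hookrightarrow\ate(\Omega)$ is compact; composing with the continuous $\id_\tau$ makes $A^{s_1+\ve,\tau_1}_{p_1,q_1}(\Omega)\hookrightarrow\atz(\Omega)$ compact, and the necessity part of Theorem~\ref{comp-tau-u} then forces $\frac{s_1+\ve-s_2}{d}>\critical$ for every $\ve>0$, hence $\frac{s_1-s_2}{d}\ge\critical$ in the limit, contradicting the strict inequality \eqref{no-cont-tau}. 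Note that this repaired route is genuinely different from the paper's, which instead sandwiches $\id_\tau$ between known spaces (classical Besov, $\mathcal{N}$- and $\mathcal{E}$-spaces) and invokes the sharp \emph{continuity} results of \cite{hs12b,hs14} as external input; the limiting argument above needs only Theorem~\ref{comp-tau-u} itself. As written, however, your proof does not go through.
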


\begin{proof}
Here we directly follow our proof of Theorem~\ref{comp-tau-u} and apply our continuity results \cite[Theorem~3.1]{hs12b} (for $\mathcal{N}$-spaces) and \cite[Theorem~5.2]{hs14} (for $\mathcal{E}$-spaces).
  We again follow the splitting suggested by \eqref{reform}. So let us assume in the sequel that there is a continuous embedding $\id_\tau$.

{\em Step 1}. Let $\tau_2\geq \frac{1}{p_2}$ with $q_2=\infty$ if $\tau_2=\frac{1}{p_2}$. If also $\tau_1\geq \frac{1}{p_1}$ with $q_1=\infty$ if $\tau_1=\frac{1}{p_2}$, then \eqref{tlarge2} implies the continuity of
\[\id: B^{s_1+d(\tau_1-\frac{1}{p_1})}_{\infty,\infty}(\Omega) \hookrightarrow B^{s_2+d(\tau_2-\frac{1}{p_2})}_{\infty,\infty}(\Omega)\]
which is well-known to imply
  \begin{equation}\label{reform-1a}
    s_1+d\left(\tau_1-\frac{1}{p_1}\right)\geq s_2+d\left(\tau_2-\frac{1}{p_2}\right),
  \end{equation}
contradicting \eqref{no-cont-tau} in that case.
Likewise, if $ \tau_1\leq\frac{1}{p_1}$ with $q_1<\infty$ if $\tau_1=\frac{1}{p_1}$, then \eqref{below-tau-small} leads to the continuity of
\[\id: B^{s_1+d(\tau_1-\frac{1}{p_1}+\frac{1}{p_0})}_{p_0,q_0}(\Omega)\hookrightarrow  B^{s_2+d(\tau_2-\frac{1}{p_2})}_{\infty,\infty}(\Omega)\]
 which implies again \eqref{reform-1a} and thus contradicts \eqref{no-cont-tau}.

{\em Step 2}. Assume $\tau_2< \frac{1}{p_2}$ and proceed parallel to Step~2 of the proof of Theorem~\ref{comp-tau-u}. If also $\tau_1<\frac{1}{p_1}$, then the continuity of $\id_\tau:\fte(\Omega)\hookrightarrow\ftz(\Omega)$ results in the continuity of the corresponding embedding between $\mathcal{E}$-spaces, using \eqref{fte}, which in turn by \cite[Theorem~5.2]{hs14} leads to a contradiction of \eqref{reform-1a} again. The extension to spaces $\bt(\Omega)$ is obtained via \eqref{N-BT-emb}, \eqref{N-BT-equal},
\[
\MBe(\Omega)\hookrightarrow \bte(\Omega)\hookrightarrow\btz(\Omega)\hookrightarrow B^{s_2,\tau_2}_{p_2,\infty}(\Omega)=\mathcal{N}^{s_2}_{u_2,p_2,\infty}(\Omega),
\]
which by \cite[Theorem~3.1]{hs12b} implies
\[\frac{s_1-s_2}{d}\geq      \max\left\{0, \frac{1}{p_1}-\tau_1-\frac{1}{p_2}+\max\left\{\tau_2,\frac{p_1}{p_2}\tau_1\right\}\right\} = \critical
\]
contradicting \eqref{no-cont-tau} in this setting. If $\tau_1\geq \frac{1}{p_1}$ with $q_1=\infty$ when $\tau_1=\frac{1}{p_1}$, then Proposition~\ref{yy02} together with \eqref{N-BT-equal}, \eqref{fte}, \eqref{elem} yield the continuity of
\[
\id: B^{s_1+d(\tau_1-\frac{1}{p_1})}_{\infty,\infty}(\Omega) \hookrightarrow \mathcal{N}^{s_2}_{u_2,p_2,\infty}(\Omega)
\]
and thus again $s_1-s_2\geq d(\frac{1}{p_1}-\tau_1)$, contradicting \eqref{no-cont-tau}. When $\tau_1=\frac{1}{p_1}$, $q_1<\infty$, \eqref{lim-2} leads to the same contradiction again.

{\em Step 3}. Assume finally $\tau_2=\frac{1}{p_2}$, $q_2<\infty$. Then the embeddings \eqref{lim-5}, \eqref{lim-9} and \eqref{lim-8} disprove \eqref{no-cont-tau} in the corresponding settings.
\end{proof}

\begin{remark}
Obviously Theorem~\ref{comp-tau-u} implies, in particular, that the embedding $\id_\tau$ is continuous when $s_1-s_2> d\ \critical$. In view of Corollary~\ref{cont-tau} it thus turns out that the limiting case for the embedding $\id_\tau$ is indeed
\[
  \frac{s_1-s_2}{d}  = \critical.
\]
Here some influence of the fine parameters $q_i$ can also be expected. But this question is postponed to a separate study in the future.
\end{remark}

\section{Entropy numbers}

First we return to the compact embedding $\id_{\mathcal{A}}$ given by \eqref{bd1comp}, recall Theorem~\ref{comp}. For its entropy numbers we obtained in \cite[Corollaries~4.1, 4.3]{HaSk-morrey-comp} the following result.


\begin{theorem}  \label{ek-NE}
  Let  $s_i\in \real$, $0<q_i\leq\infty$, $0<p_i\leq u_i<\infty$, {or $p_i=u_i=\infty$ in the case of $\mathcal N$-spaces},  $i=1,2$. Assume that \eqref{bd3acomp} is satisfied. Then we obtain for the entropy numbers of the compact embedding
\begin{equation*}
	 \id_{\mathcal{A}}: \MAe(\Omega )\hookrightarrow \MAz(\Omega )
\end{equation*}
the following results:
\bli
\item[{\hfill\bfseries\upshape (i)\hfill}]
If
\begin{equation}
\frac{1}{p_1} -\frac{1 }{p_2}\ \geq\ \frac{s_1-s_2}{d}\ >\  \frac{p_1}{u_1} \Big(\frac{1}{p_1}- \frac{1}{p_2}\Big) \qquad\text{and}\qquad \frac{u_2}{p_2}<\frac{u_1}{p_1},
\end{equation}
then there exists some $c>0$ and for any $\varepsilon>0$ some $c_\varepsilon>0$ such that for all $k\in\nn$,
\begin{equation}\label{new_morrey_fu}
  c \ k^{- \frac{u_1}{u_1-p_1}(\frac{s_1-s_2}{d}-\frac{p_1}{u_1}(\frac{1}{p_1}- \frac{1}{p_2}) )} \le \   e_k \ \big(\id_{\mathcal{A}}\big)\ \le \ c_\varepsilon k^{- \frac{u_1}{u_1-p_1}(\frac{s_1-s_2}{d}-\frac{p_1}{u_1}(\frac{1}{p_1}- \frac{1}{p_2}) )+\varepsilon}.
 \end{equation}
\item[{\hfill\bfseries\upshape (ii)\hfill}]
In all other cases admitted by \eqref{bd3acomp}, it holds
\begin{equation}\label{ek-class-fu}
    e_k \big(\id_{\mathcal{A}}: \MAe(\Omega)\hookrightarrow \MAz(\Omega)\big) \sim  k^{-\frac{s_1-s_2}{d}},\quad k\in\nn.
\end{equation}
\eli
\end{theorem}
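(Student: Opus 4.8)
The plan is to replace $\id_{\mathcal{A}}$ by an isomorphic operator acting between sequence spaces and then to estimate its entropy numbers resolution level by resolution level. Both $\MBe(\Omega)$ and $\MFe(\Omega)$, together with their targets, admit wavelet characterisations, so that after applying a sufficiently smooth, compactly supported Daubechies system $\id_{\mathcal{A}}$ becomes, up to equivalence of quasi-norms, the identity between the associated Besov-Morrey (for $\mathcal{A}=\mathcal{N}$), respectively Triebel-Lizorkin-Morrey (for $\mathcal{A}=\mathcal{E}$), sequence spaces indexed by those dyadic cubes $Q_{j,k}$ that meet $\Omega$. Since $\Omega$ is bounded, at each level $j\in\no$ only $\sim 2^{jd}$ cubes are relevant, so the sequence space is a smoothness-weighted (weight $2^{js_i}$) direct sum over $j$ of finite-dimensional Morrey sequence spaces of dimension $\sim 2^{jd}$. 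As entropy numbers are invariant under isomorphisms, it suffices to analyse this diagonal-type operator.

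First I would assemble the finite-dimensional building blocks. At a fixed level $j$ the relevant object is the identity between two finite-dimensional spaces carrying a Morrey sequence norm, namely a supremum over subcubes $P$ of normalised $\ell_{p_i}$-averages of the coefficients sitting inside $P$; the normalising exponents are governed by $\frac1{u_i}-\frac1{p_i}$. The entropy numbers of such identities are controlled by Schütt-type estimates for $e_k(\id\colon \ell_{p_1}^{n}\to \ell_{p_2}^{n})$, suitably modified to absorb the supremum over subcubes — this is precisely where the ratios $p_i/u_i$ enter. I would also need the off-diagonal comparison of levels $j$ and $j'$, which decays geometrically thanks to the smoothness weights $2^{j(s_1-s_2)}$; the $\mathcal{E}$-case is handled in parallel, the Fefferman–Stein type inner sum being treated by the Morrey vector-valued maximal inequality.

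Next I would glue the block estimates into the global bound via the standard machinery for entropy numbers of operators on weighted sequence spaces (as in Edmunds–Triebel and its Morrey refinements). The dichotomy between (i) and (ii) emerges at this optimisation step: the available $k$ entropy ``slots'' are distributed across levels. In regime (ii) the sum is dominated by the single critical level at which $k\sim 2^{jd}$, reproducing exactly the classical rate $k^{-(s_1-s_2)/d}$ of \eqref{ek-id_A}, since the Morrey structure is then subcritical and indistinguishable from a classical embedding. In regime (i), governed by $\frac{u_2}{p_2}<\frac{u_1}{p_1}$ together with $\frac1{p_1}-\frac1{p_2}\ge \frac{s_1-s_2}{d}>\frac{p_1}{u_1}\big(\frac1{p_1}-\frac1{p_2}\big)$, a whole range of levels contributes comparably and the summation yields the modified exponent $\frac{u_1}{u_1-p_1}\big(\frac{s_1-s_2}{d}-\frac{p_1}{u_1}(\frac1{p_1}-\frac1{p_2})\big)$, the $\varepsilon$-loss in the upper bound reflecting the logarithmic factors intrinsic to the Schütt asymptotics. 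For the lower bounds I would test $\id_{\mathcal{A}}$ on a carefully chosen subcollection of wavelets and invoke the lower entropy estimate for the corresponding finite-dimensional Morrey identity: in case (ii) a single level already gives $e_k\gs k^{-(s_1-s_2)/d}$, while in case (i) the hypothesis $\frac{u_2}{p_2}<\frac{u_1}{p_1}$ is exactly what guarantees that an adapted construction saturates the upper bound.

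The main obstacle is the finite-dimensional Morrey estimate of the building-block step: the supremum over nested subcubes couples the behaviour at different sub-scales, so the clean $\ell_p$-to-$\ell_p$ Schütt asymptotics must be replaced by their Morrey analogues, and producing matching upper and lower bounds there — in particular pinning down the sharp role of $u_i/p_i$ and of the condition $\frac{u_2}{p_2}<\frac{u_1}{p_1}$ — is the technical heart of the argument. I would finally remark that, unlike for the compactness statements elsewhere in this paper, the pure embedding technique \emph{cannot} deliver case (i): sandwiching the Morrey embedding between classical ones forces either a loss of $d/u_1$ in the effective smoothness gap (via \eqref{pu-2-infty-1}) or, through a same-smoothness factorisation into $B^{s_1}_{p_1,\cdot}(\Omega)\to B^{s_2}_{u_2,\cdot}(\Omega)$, a classical index $\delta_+=s_1-s_2-d\big(\frac1{p_1}-\frac1{u_2}\big)_+\le 0$ in precisely this regime. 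Hence only the classical rate can ever be reached by embeddings, and the genuinely Morrey exponent $\frac{u_1}{u_1-p_1}(\cdots)$ requires the direct wavelet-based analysis.
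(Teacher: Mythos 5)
Your proposal and the paper's proof of Theorem~\ref{ek-NE} part ways completely. The paper does not reprove the core of the statement at all: for $0<p_i\leq u_i<\infty$, $i=1,2$, both parts (i) and (ii) are simply quoted from \cite[Corollaries~4.1, 4.3]{HaSk-morrey-comp}, and the only argument actually carried out is for the endpoint cases $p_1=u_1=\infty$ or $p_2=u_2=\infty$ (which necessarily fall under part (ii)). These are settled by sandwiching $\id_{\mathcal{A}}$ between classical Besov embeddings as in \eqref{pu-1-infty-1}, \eqref{pu-1-infty-2} and \eqref{pu-2-infty-1}--\eqref{pu-2-infty-3}, applying the multiplicativity \eqref{e-multi} of entropy numbers, and invoking the Edmunds--Triebel rate \eqref{ek-id_A}. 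Your closing observation --- that pure embedding techniques can only ever reproduce the classical rate $k^{-(s_1-s_2)/d}$ and are therefore structurally incapable of producing the genuinely Morrey exponent of part (i) --- is correct, and it is precisely why the paper restricts its embedding argument to part-(ii) endpoint situations and outsources everything else.

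As a self-contained proof, however, your proposal has a genuine gap exactly where you locate it yourself. The reduction to a weighted diagonal operator on Morrey sequence spaces and the level-by-level optimisation are standard machinery; all of the actual content of the theorem --- the dichotomy between (i) and (ii), the threshold $\frac{u_2}{p_2}<\frac{u_1}{p_1}$, the exponent $\frac{u_1}{u_1-p_1}\bigl(\frac{s_1-s_2}{d}-\frac{p_1}{u_1}(\frac{1}{p_1}-\frac{1}{p_2})\bigr)$, and the unavoidable $\varepsilon$-loss in the upper bound of \eqref{new_morrey_fu} --- sits in the two-sided entropy estimates for the identity between the finite-dimensional Morrey sequence spaces. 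Writing ``Sch\"utt-type estimates suitably modified to absorb the supremum over subcubes'' names that problem rather than solving it: the supremum over nested cubes destroys the symmetry on which the classical Sch\"utt argument rests, and you give no indication of how the lower-bound test configuration in regime (i) is to be constructed, nor why $\frac{u_2}{p_2}<\frac{u_1}{p_1}$ is the exact threshold separating the two regimes. Moreover, the endpoint cases $p_i=u_i=\infty$ --- the only cases the paper actually proves here --- are not addressed; your framework is set up for finite $u_i$, and although it degenerates to $\ell_\infty$ there, that needs to be said. In short, the skeleton is plausible and genuinely different from (indeed far more ambitious than) the paper's argument, but with the decisive finite-dimensional estimates missing it is a roadmap for the cited reference \cite{HaSk-morrey-comp} rather than a proof of the theorem.
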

{
\begin{proof} The cases when $0<p_i\leq u_i <\infty$, $i=1,2$, were proved in \cite{HaSk-morrey-comp}, as already mentioned. So it remains to verify the cases $p_1=u_1=\infty$ or $p_2=u_2=\infty$. Note that in both cases we are in part (ii).

If $p_1=u_1=\infty$, \eqref{bd3acomp} reads as $s_1-s_2>0$. Now we use \eqref{pu-1-infty-1} and the multiplicativity of the entropy numbers to obtain
\begin{equation}
e_k(\id_{\mathcal{A}}) \ \lesssim \ e_k\left( B^{s_1}_{\infty, q_1}(\Omega) \hookrightarrow A^{s_2}_{u_2, q_2}(\Omega)\right)\ \lesssim \ k^{-\frac{s_1-s_2}{d}}.\nonumber
\end{equation}
Moreover, by \eqref{pu-1-infty-2} we get the desired estimate from below:
\begin{equation}
e_k(\id_{\mathcal{A}})\ \gtrsim \ e_k\left( B^{s_1}_{\infty, q_1}(\Omega) \hookrightarrow A^{s_2}_{p_2, q_2}(\Omega)\right)\ \gtrsim \ k^{-\frac{s_1-s_2}{d}}. \nonumber
\end{equation}
The case when $p_2=u_2=\infty$ and $\frac{s_1-s_2}{d}>\frac{1}{u_1}$ follows similarly but using \eqref{pu-2-infty-1}-\eqref{pu-2-infty-3} this time. 
\end{proof}
}

Now we give the counterpart of the above result for the compact embedding \eqref{tau-comp-u1}, described by Theorem~\ref{comp-tau-u}.

\begin{theorem}  \label{ek-tau}
  Let  $s_i\in \real$, $0<q_i\leq\infty$, $0<p_i\leq \infty$ (with $p_i<\infty$ in case of $A=F$), $\tau_i\geq 0$, $i=1,2$.
 Assume that \eqref{tau-comp-u2} is satisfied. Then we obtain for the entropy numbers of the compact embedding
\begin{equation*}
	 \id_{\tau}: \ate(\Omega )\hookrightarrow \atz(\Omega )
\end{equation*}
the following results:
\bli
\item[{\hfill\bfseries\upshape (i)\hfill}]
If $\tau_1<\frac{1}{p_1}$,
\begin{equation}\label{tau-comp-7a}
\tau_1\, \frac{p_1}{p_2} > \tau_2, 
\end{equation}
and
\begin{equation}\label{tau-comp-7b}
\frac{1}{p_1} -\frac{1 }{p_2}\ \geq\ \frac{s_1-s_2}{d}\ >\ (1- p_1\tau_1) \Big(\frac{1}{p_1}- \frac{1}{p_2}\Big),
\end{equation}
then there exists some $c>0$ and for any $\varepsilon>0$ some $c_\varepsilon>0$ such that for all $k\in\nn$,
\begin{equation}\label{tau-comp-9}
  c\ k^{- \frac{1}{p_1 \tau_1}(\frac{s_1-s_2}{d}-(1-p_1\tau_1)(\frac{1}{p_1}- \frac{1}{p_2}) )} \ \le \ e_k\left(\id_\tau\right) \ \le \ c_\varepsilon \ k^{- \frac{1}{p_1\tau_1}(\frac{s_1-s_2}{d}-(1-p_1\tau_1)(\frac{1}{p_1}- \frac{1}{p_2}) )+\varepsilon}.
\end{equation}
\item[{\hfill\bfseries\upshape (ii)\hfill}]
In all other cases admitted by \eqref{tau-comp-u2}, it holds
\begin{equation}\label{tau-comp-3}
e_k(\id_\tau) \ \sim \ k^{-\frac{s_1-s_2}{d} - (\tau_1-\frac{1}{p_1})_+ + (\tau_2-\frac{1}{p_2})_+},\quad k\in\nn.
\end{equation}
\eli
\end{theorem}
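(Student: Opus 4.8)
The plan is to follow the strategy of Theorem~\ref{comp-tau-u} verbatim: reduce $\id_\tau$ to an embedding between classical Besov spaces or between smoothness Morrey spaces, and then read off the entropy numbers from the classical result \eqref{ek-id_A} and from Theorem~\ref{ek-NE}. The two tools are the multiplicativity \eqref{e-multi}, which turns a sandwich of continuous and compact embeddings into matching two-sided bounds, and the coincidences of Proposition~\ref{yy02} (active for $\tau_i\ge\frac1{p_i}$) and \eqref{fte} (active for $\tau_i<\frac1{p_i}$), all transported to $\Omega$ by Remark~\ref{emb-Omega}. As in \eqref{reform} I would split according to $\tau_i<\frac1{p_i}$ or $\tau_i\ge\frac1{p_i}$, $i=1,2$, and I would repeatedly use that both the classical rate and the Morrey rates of Theorem~\ref{ek-NE} are independent of the fine parameters $q_1,q_2$.

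Part~(i) is the genuinely Morrey regime. Condition \eqref{tau-comp-7a} forces $\tau_2<\frac1{p_2}$, so for $A=F$ the coincidence \eqref{fte} identifies source and target with $\MFe(\Omega),\MFz(\Omega)$ where $\frac1{u_i}=\frac1{p_i}-\tau_i$, and I would apply Theorem~\ref{ek-NE}(i). The dictionary is exact: $\frac{p_1}{u_1}=1-p_1\tau_1$ turns \eqref{tau-comp-7b} into the smoothness band of Theorem~\ref{ek-NE}(i); the side condition $\frac{u_2}{p_2}<\frac{u_1}{p_1}$ reads $p_1\tau_1>p_2\tau_2$, i.e.\ \eqref{tau-comp-7a}; and $\frac{u_1}{u_1-p_1}=\frac1{p_1\tau_1}$ reproduces the exponent in \eqref{tau-comp-9}, including the $\varepsilon$-gap inherited from \eqref{new_morrey_fu}. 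For $A=B$ I would instead sandwich with \eqref{N-BT-emb}, \eqref{N-BT-equal}, namely $\MBe(\Omega)\hookrightarrow\bte(\Omega)\hookrightarrow\mathcal N^{s_1}_{u_1,p_1,\infty}(\Omega)$ on the source side and the analogue on the target side, and then exploit the $q$-independence of the rate in Theorem~\ref{ek-NE}(i) so that the upper bound through $\mathcal N^{s_1}_{u_1,p_1,\infty}\hookrightarrow\mathcal N^{s_2}_{u_2,p_2,q_2}$ and the lower bound through $\mathcal N^{s_1}_{u_1,p_1,q_1}\hookrightarrow\mathcal N^{s_2}_{u_2,p_2,\infty}$ coincide.

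For part~(ii) several regimes are routine. If $\tau_1\ge\frac1{p_1}$ and $\tau_2\ge\frac1{p_2}$ (with $q_i=\infty$ at equality), Proposition~\ref{yy02} turns $\id_\tau$ into $B^{\sigma_1}_{\infty,\infty}(\Omega)\hookrightarrow B^{\sigma_2}_{\infty,\infty}(\Omega)$ with $\sigma_i=s_i+d(\tau_i-\frac1{p_i})$, and \eqref{ek-id_A} yields $k^{-(\sigma_1-\sigma_2)/d}$, which is exactly \eqref{tau-comp-3}. If only $\tau_1\ge\frac1{p_1}$, the source collapses to $B^{\sigma_1}_{\infty,\infty}=\mathcal N^{\sigma_1}_{\infty,\infty,\infty}$ whose Morrey ratio is $1$, so the separating condition $\frac{u_2}{p_2}<\frac{u_1}{p_1}$ of Theorem~\ref{ek-NE}(i) fails and one lands in its part~(ii), i.e.\ the classical rate. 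The sub-case $\tau_1<\frac1{p_1},\tau_2<\frac1{p_2}$ left over from part~(i) is treated through \eqref{fte}: failure of \eqref{tau-comp-7a} gives $\frac{u_2}{p_2}\ge\frac{u_1}{p_1}$, while failure of the upper bound in \eqref{tau-comp-7b} violates $\frac1{p_1}-\frac1{p_2}\ge\frac{s_1-s_2}{d}$, so in either event Theorem~\ref{ek-NE}(i) is inapplicable and the classical rate survives. The doubly-limiting values $\tau_i=\frac1{p_i}$ with $q_i<\infty$ are not reached by the coincidences and would be inserted, as in the compactness proof, by sandwiching between $B^{s_1}_{\infty,q_0}(\Omega)$ (from \cite[Corollary~5.2]{YHSY}, with \eqref{elem-tau} for $A=F$) and $B^{s_1}_{\infty,\infty}(\Omega)$ (from \eqref{010319}), where $q$-independence again closes the sandwich.

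The step I expect to be the real obstacle is the mixed regime $\tau_1<\frac1{p_1}$, $\tau_2\ge\frac1{p_2}$: the source is genuinely Morrey while the target collapses, via Proposition~\ref{yy02}, to $B^{\sigma_2}_{\infty,\infty}(\Omega)=\mathcal N^{\sigma_2}_{\infty,\infty,\infty}(\Omega)$. The crude reduction $\ate(\Omega)\hookrightarrow B^{s_1-d/u_1}_{\infty,\infty}(\Omega)$ from \eqref{010319}, paired with the lower lift $B^{s_1}_{u_1,q_1}(\Omega)\hookrightarrow\mathcal N^{s_1}_{u_1,p_1,q_1}(\Omega)$, only sandwiches $e_k(\id_\tau)$ between $k^{-(\frac{s_1-s_2}{d}-\frac1{u_1})}$ and $k^{-\frac{s_1-s_2}{d}}$, leaving a gap of size $\frac1{u_1}$. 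To reach the asserted classical rate $k^{-(s_1-\sigma_2)/d}$ in \eqref{tau-comp-3} I would instead reduce directly to $\mathcal N^{s_1}_{u_1,p_1,\cdot}(\Omega)\hookrightarrow\mathcal N^{\sigma_2}_{\infty,\infty,\infty}(\Omega)$ and invoke the $p_2=u_2=\infty$ instance of Theorem~\ref{ek-NE}, whose part~(ii) supplies precisely this rate and, via $q$-independence, makes the upper and lower bounds meet. Verifying that this edge instance of Theorem~\ref{ek-NE} genuinely governs the situation here, so that no spurious non-classical exponent intrudes, is the crux of the argument.
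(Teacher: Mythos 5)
Your proposal is correct and follows essentially the same route as the paper: part (i) via the coincidence \eqref{fte} and Theorem~\ref{ek-NE}(i) (with the $\mathcal{N}$-sandwich \eqref{tau2small-1}--\eqref{tau2small-2} for $A=B$), the regimes with $\tau_i\ge\frac1{p_i}$ via Proposition~\ref{yy02} and the classical rate \eqref{ek-id_A}, the limiting cases $\tau_i=\frac1{p_i}$, $q_i<\infty$ by $q$-independent sandwiches, and in particular the mixed regime $\tau_1<\frac1{p_1}$, $\tau_2\ge\frac1{p_2}$ handled exactly as in Substep~2.3 of the paper, by reducing to $\MFe(\Omega)\hookrightarrow\mathcal N^{s_2+d(\tau_2-\frac1{p_2})}_{\infty,\infty,\infty}(\Omega)$ and invoking the $p_2=u_2=\infty$ instance of Theorem~\ref{ek-NE}(ii). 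Your dictionary $\frac{u_1}{u_1-p_1}=\frac1{p_1\tau_1}$, $\frac{u_2}{p_2}<\frac{u_1}{p_1}\Leftrightarrow p_1\tau_1>p_2\tau_2$ is the one the paper uses implicitly, so no gap remains.
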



\begin{proof} {To prove this result we basically rely on the proof of Theorem \ref{tau-comp-u1} and the corresponding counterparts of the entropy numbers for the spaces $\MA$, cf. Theorem \ref{ek-NE}, as well as for the classical spaces $\A$, cf. Remark \ref{remark-ek-ak-class}. Therefore, we start by proving part (i) and then we split part (ii) in all the possible cases as in the proof of Theorem \ref{tau-comp-u1}.}

{\em Step 1}. We deal with case (i). Assumption \eqref{tau-comp-7a} implies also $\tau_2<\frac{1}{p_2}$,  such that in view of our reformulation \eqref{reform} of the compactness condition \eqref{tau-comp-u2} it is obvious that the expression on the right-hand side of \eqref{tau-comp-u2} equals $(1-p_1\tau_1)(\frac{1}{p_1}-\frac{1}{p_2})$. We first use \eqref{fte} to get
\begin{equation*}
e_k(\id_\tau:\fte(\Omega)\hookrightarrow \ftz(\Omega))\ \sim \ e_k\left(\id_{\mathcal{E}}: \MFe(\Omega)\hookrightarrow \MFz(\Omega)\right),\quad \frac{1}{u_i} = \frac{1}{p_i}-\tau_i, \quad i=1,2,
\end{equation*}
together with Theorem~\ref{ek-NE}(i) which covers the case $A=F$. As for the $B$-case, we benefit from \eqref{tau2small-1} (to the estimate from above) and \eqref{tau2small-2} (to the estimate from below),
such that the multiplicativity of entropy numbers implies
\[
e_k\left(\id_{\mathcal{N}}: \MBe(\Omega)\hookrightarrow \mathcal{N}^{s_2}_{u_2,p_2,\infty}(\Omega)\right)\ \lesssim \ e_k\left(\id_\tau\right)\ \lesssim \ e_k\left(\id_{\mathcal{N}}:\mathcal{N}^{s_1}_{u_1,p_1,\infty}(\Omega) \hookrightarrow \MBz(\Omega)\right).
\]
Application of Theorem~\ref{ek-NE}(i) concludes the argument.\\

{\em Step 2}. We are left to prove \eqref{tau-comp-3} in all remaining cases.

{\em Substep 2.1}. We first continue with the case $\tau_i<\frac{1}{p_i}$, $i=1,2$, from Step~1, where we now assume that \eqref{tau-comp-7a} or \eqref{tau-comp-7b} are not satisfied. In both cases we proceed as above and use the coincidence \eqref{fte} together with Theorem~\ref{ek-NE}(ii). This yields
\[
e_k\left(\id_\tau:\fte(\Omega)\hookrightarrow \ftz(\Omega)\right)\ \sim \ k^{-\frac{s_1-s_2}{d}} \ = \ k^{-\frac{s_1-s_2}{d} - (\tau_1-\frac{1}{p_1})_+ + (\tau_2-\frac{1}{p_2})_+}, \quad k\in\nn,
\]
i.e. the desired result \eqref{tau-comp-3} for $A=F$. The case $A=B$ is done as in the end of Step~1, using again \eqref{tau2small-1} and \eqref{tau2small-2}. 

{\em Substep 2.2}. We stick to $\tau_2<\frac{1}{p_2}$, now together with $\tau_1\geq \frac{1}{p_1}$ and $q_1=\infty$ if $\tau_1=\frac{1}{p_1}$. In view of \eqref{reform} we thus assume $s_1-s_2 > d(\frac{1}{p_1}-\tau_1)$ and need to show \eqref{tau-comp-3} in the form
\[
e_k(\id_\tau)\ \sim \ k^{-\frac{s_1-s_2}{d}-(\tau_1-\frac{1}{p_1})}, \quad k\in\nn.
\]
This can be seen as follows. By \eqref{tau2small-4},
\[
e_k(\id_\tau: \ate(\Omega)\hookrightarrow \atz(\Omega)) \ \gtrsim \ e_k\left(\id_{\mathcal{N}}: \mathcal{N}^{s_1+d(\tau_1-\frac{1}{p_1})}_{\infty,\infty,\infty}(\Omega)\hookrightarrow \mathcal{N}^{s_2}_{u_2, p_2, \infty}(\Omega)\right),
\]
which in view of Theorem~\ref{ek-NE} (applied to $\mathcal{A}=\mathcal{N}$) leads to
\[
e_k(\id_\tau: \ate(\Omega)\hookrightarrow \atz(\Omega)) \ \gtrsim \  k^{-\frac{s_1-s_2}{d}-(\tau_1-\frac{1}{p_1})}, \quad k\in\nn,
\]
under the given assumptions. The estimate from above follows similarly from \eqref{tau2small-3} and again the multiplicativity of entropy numbers.

{\em Substep 2.3}. We study the case $\tau_2\geq \frac{1}{p_2}$ with $q_2=\infty$ if $\tau_2=\frac{1}{p_2}$. According to \eqref{reform} we assume that
  \begin{equation}\label{tau-comp-2}
  \frac{s_1-s_2}{d} > \frac{1}{p_1}-\tau_1-\frac{1}{p_2}+\tau_2.
\end{equation}
If $\tau_1\geq \frac{1}{p_1}$ with $q_1=\infty$ if $\tau_1=\frac{1}{p_1}$, then Proposition~\ref{yy02} implies that
\[
e_k(\id_\tau)\ \sim \ e_k\big( \id_B : B^{\sigma_1}_{\infty,\infty}(\Omega) \hookrightarrow B^{\sigma_2}_{\infty,\infty}(\Omega)\big),\quad \sigma_i=s_i+d\left(\tau_i-\frac{1}{p_i}\right),\quad i=1,2.
\]
But the asymptotic behaviour of the entropy numbers in that latter case is well-known for $\sigma_1>\sigma_2$, which is equivalent to \eqref{tau-comp-2}, that is
\[
e_k\big( \id_B : B^{\sigma_1}_{\infty,\infty}(\Omega) \hookrightarrow B^{\sigma_2}_{\infty,\infty}(\Omega)\big) \ \sim \  k^{-\frac{\sigma_1-\sigma_2}{d}}, \qquad k\in\nn,
 \]
cf. Remark \ref{remark-ek-ak-class}. This coincides with \eqref{tau-comp-3} in that case.

If $\tau_1<\frac{1}{p_1}$ we argue as follows. We use \eqref{fte}, \eqref{MB=B}
and Proposition~\ref{yy02} to get
\begin{equation}\label{tau-comp-8}
e_k(\id_\tau: \fte(\Omega)\hookrightarrow\ftz(\Omega)) \ \sim \ e_k\left(\id: \MFe(\Omega) \hookrightarrow \mathcal{N}^{s_2+d(\tau_2-\frac{1}{p_2})}_{\infty,\infty,\infty}(\Omega)\right).
\end{equation}
On the other hand, Theorem~\ref{ek-NE}(ii) yields
\[
e_k\left(\id_\mathcal{N}: \MBe(\Omega) \hookrightarrow \mathcal{N}^{s_2+d(\tau_2-\frac{1}{p_2})}_{\infty,\infty,\infty}(\Omega)\right)\ \sim \ k^{-\frac{s_1-s_2}{d} + \tau_2-\frac{1}{p_2}},\quad k\in\nn,
\]
whenever \eqref{tau-comp-2} is satisfied. In view of \eqref{elem} we can thus continue \eqref{tau-comp-8} by
\[
e_k(\id_\tau: \fte(\Omega)\hookrightarrow\ftz(\Omega)) \ \sim \
k^{-\frac{s_1-s_2}{d} + \tau_2-\frac{1}{p_2}},\quad k\in\nn,
\]
which is the desired result in case of $A=F$. The case $A=B$ can be obtained noting that
\[
e_k(\id_\tau: B^{s_1, \tau_1}_{p_1, \infty}(\Omega)\hookrightarrow\btz(\Omega)) \ \sim \ e_k\left(\id_{\mathcal{N}}: \mathcal{N}^{s_1}_{u_1, p_1, \infty}(\Omega) \hookrightarrow \mathcal{N}^{s_2+d(\tau_2-\frac{1}{p_2})}_{\infty,\infty,\infty}(\Omega)\right),
\]
where we have used \eqref{MB=B} and \eqref{N-BT-equal}. Now \eqref{elem-1-t} and Theorem \ref{ek-NE}(ii) give the desired result. \\

{\em Step 3}. It remains to study the limiting cases, that is, when $\tau_i=\frac{1}{p_i}$ and $q_i<\infty$ for $i=1$ or $i=2$.

{\em Substep 3.1}. Let $\tau_1=\frac{1}{p_1}$, $q_1<\infty$, and $\tau_2<\frac{1}{p_2}$. Following the arguments of Substep 2.3 of the proof of Theorem~\ref{comp-tau-u}, in particular, \eqref{lim-1} and \eqref{lim-2}, and using the multiplicativity of entropy numbers we arrive at
\begin{equation} \label{lim-3}
e_k\left(\id: B^{s_1}_{\infty,q_1}(\Omega) \hookrightarrow \mathcal{N}^{s_2}_{u_2,p_2,\infty}(\Omega)\right)\ \lesssim \
e_k(\id_\tau)\ \lesssim \ e_k\left(\id: B^{s_1}_{\infty,\infty}(\Omega) \hookrightarrow \MAz(\Omega)\right),
\end{equation}
where $ \frac{1}{u_2}=\frac{1}{p_2}-\tau_2>0$.
However, completely parallel to Substeps~2.2 and 2.3, Theorem~\ref{ek-NE}(ii) implies
\[
e_k\left(\id: B^{s_1}_{\infty,\infty}(\Omega) \hookrightarrow \MAz(\Omega)\right) \ \sim \
e_k\left(\id: B^{s_1}_{\infty,q_1}(\Omega) \hookrightarrow \mathcal{N}^{s_2}_{u_2,p_2,\infty}(\Omega)\right) \ \sim \ k^{-\frac{s_1-s_2}{d}}
\]
whenever $s_1>s_2$, such that \eqref{lim-3} 
finally results in
\[
e_k(\id_\tau) \ \sim \ k^{-\frac{s_1-s_2}{d}},\quad k\in\nn.
\]

{\em Substep 3.2}. Let $\tau_1=\frac{1}{p_1}$, $q_1<\infty$, and $\tau_2\geq \frac{1}{p_2}$ with $q_2=\infty$ if $\tau_2=\frac{1}{p_2}$. In view of \eqref{reform}, we assume now $s_1-s_2 > d(\tau_2-\frac{1}{p_2})$. Then \eqref{tlarge1} leads to
\[
e_k(\id_\tau) \lesssim \ e_k\left(\id_B: B^{s_1}_{\infty,\infty}(\Omega) \hookrightarrow B^{s_2+d(\tau_2-\frac{1}{p_2})}_{\infty,\infty}(\Omega)\right) \ \lesssim \ k^{-\frac{s_1-s_2}{d}+(\tau_2-\frac{1}{p_2})}, \quad k\in\nn,
\]
by the same arguments as above. Conversely, according to \cite[Corollary~5.2]{YHSY} (adapted to spaces on bounded domains), Proposition \ref{yy02} and \eqref{elem-tau}, we have in this case
\begin{equation}\label{lim-13}
B^{s_1}_{\infty,q_1}(\Omega) \hookrightarrow \ate(\Omega) \hookrightarrow \atz(\Omega) = B^{s_2+d(\tau_2-\frac{1}{p_2})}_{\infty,\infty}(\Omega),
\end{equation}
such that
\[
e_k(\id_\tau) \ \gtrsim \ e_k\left(\id_B:
B^{s_1}_{\infty,q_1}(\Omega) \hookrightarrow B^{s_2+d(\tau_2-\frac{1}{p_2})}_{\infty,\infty}(\Omega)\right)\ \gtrsim \ k^{-\frac{s_1-s_2}{d}+(\tau_2-\frac{1}{p_2})}, \quad k\in\nn.
\]
This concludes the proof in this case.

{\em Substep 3.3}. Let $\tau_2=\frac{1}{p_2}$, $q_2<\infty$, and assume $\tau_1<\frac{1}{p_1}$. The chain of embeddings \eqref{lim-5} leads to
\[
e_k(\id_\tau)\ \gtrsim \ e_k\left(\id: A^{s_1}_{u_1,q_1}(\Omega)\hookrightarrow B^{s_2}_{\infty,\infty}(\Omega)\right)\gtrsim \ k^{-\frac{s_1-s_2}{d}},\quad k\in\nn,
\]
as desired, where we made use of the condition $\frac{s_1-s_2}{d}>\frac{1}{u_1}=\frac{1}{p_1}-\tau_1$ as \eqref{reform} reads in this case.

For the estimate from above we use \eqref{lim-6}. In the same way as there, if $A=B$ we take $q_0=q_2$, and in case $A=F$ we choose $q_0\leq \min\{p_2,q_2\}$ and further use \eqref{elem-tau}. 
Thus
\[
e_k(\id_\tau:\ate(\Omega)\hookrightarrow \atz(\Omega)) \ \lesssim \ e_k\left(\id: \mathcal{N}^{s_1}_{u_1,p_1,\infty}(\Omega) \hookrightarrow \mathcal{N}^{s_2}_{\infty,\infty,q_0}(\Omega)\right) \ \lesssim \ k^{-\frac{s_1-s_2}{d}},\quad k\in\nn,
\]
by Theorem~\ref{ek-NE}(ii).

{\em Substep 3.4}. Let $\tau_2=\frac{1}{p_2}$, $q_2<\infty$, and $\tau_1\geq \frac{1}{p_1}$ with $q_1=\infty$ if $\tau_1=\frac{1}{p_1}$. Assume that \mbox{$s_1-s_2>d(\tau_1-\frac{1}{p_1})$}. We benefit from \eqref{lim-9} to conclude that
\[
e_k(\id_\tau) \ \gtrsim \ e_k\left(\id_B: B^{s_1+d(\tau_1-\frac{1}{p_1})}_{\infty,\infty}(\Omega)\hookrightarrow B^{s_2}_{\infty,\infty}(\Omega)\right) \gtrsim \ k^{-\frac{s_1-s_2}{d} -(\tau_1-\frac{1}{p_1})}, \quad k\in\nn.
\]
For the estimate from above we adapt \eqref{lim-6} properly by
\begin{equation*}
\ate(\Omega)=B^{s_1+d(\tau_1-\frac{1}{p_1})}_{\infty,\infty}(\Omega) = \mathcal{N}^{s_1+d(\tau_1-\frac{1}{p_1})}_{\infty,\infty,\infty}(\Omega) \hookrightarrow \mathcal{N}^{s_2}_{\infty,\infty,q_0}(\Omega)=B^{s_2}_{\infty,q_0}(\Omega)\hookrightarrow  A^{s_2, \tau_2}_{p_2, q_2}(\Omega),
\end{equation*}
with the same choice of $q_0$ as before. Then we get
\[
e_k(\id_\tau)\ \lesssim \ e_k\left(\id_{\mathcal{N}}:
\mathcal{N}^{s_1+d(\tau_1-\frac{1}{p_1})}_{\infty,\infty,\infty}(\Omega) \hookrightarrow \mathcal{N}^{s_2}_{\infty,\infty,q_2}(\Omega)\right)\ \lesssim \
k^{-\frac{s_1-s_2}{d} -(\tau_1-\frac{1}{p_1})}, \quad k\in\nn.
\]

{\em Substep 3.5}. In the double-limiting case, that is, when $\tau_i=\frac{1}{p_i}$, $q_i<\infty$, $i=1,2$, then \eqref{lim-7} and \eqref{lim-8} immediately imply
\[
e_k(\id_\tau)\ \sim \ k^{-\frac{s_1-s_2}{d}}, \quad k\in\nn,
\]
since
\[
e_k\left(\id_B: B^{s_1}_{\infty,\infty}(\Omega) \hookrightarrow B^{s_2}_{\infty,q_2}(\Omega)\right) \sim e_k\left(\id_B: B^{s_1}_{\infty,q_0}(\Omega)\hookrightarrow B^{s_2}_{\infty,\infty}(\Omega)\right) \sim k^{-\frac{s_1-s_2}{d}}
\]
whenever $s_1>s_2$, cf. Remark \ref{remark-ek-ak-class}.
\end{proof}


\begin{remark}\label{ek-hybrid}
  We return to our Remarks~\ref{T-hybrid} and \ref{comp-hybrid} and formulate the result for Triebel's hybrid spaces $L^r\A(\Omega)$. Let $s_i\in \real$, $0<q_i\leq\infty$, $0<p_i< \infty$, $-\frac{d}{p_i}\leq r_i<\infty$, $i=1,2$, satisfy \eqref{hybrid-1}. Then we obtain for the compact embedding
\begin{equation}
	 \id_{L}: L^{r_1}\Ae(\Omega )\hookrightarrow L^{r_2}\Az(\Omega )
\end{equation}
the following results:
\bli
\item[{\hfill\bfseries\upshape (i)\hfill}]
  If $r_1<0$, $r_1p_1 > r_2p_2$, and
\[
\frac{1}{p_1} -\frac{1 }{p_2}\ \geq\ \frac{s_1-s_2}{d}\ >\ - \frac{p_1r_1}{d} \Big(\frac{1}{p_1}- \frac{1}{p_2}\Big),
\]
then there exists some $c>0$ and for any $\varepsilon>0$ some $c_\varepsilon>0$ such that for all $k\in\nn$,
\[
  c \ k^{- \frac{1}{d+p_1r_1}({s_1-s_2} + p_1r_1(\frac{1}{p_1}- \frac{1}{p_2}) )} \ \le  \  e_k \big(\id_L\big)\ \le \ c_\varepsilon \ k^{- \frac{1}{d+p_1r_1}({s_1-s_2} + p_1r_1(\frac{1}{p_1}- \frac{1}{p_2}) )+\varepsilon}.
\]
\item[{\hfill\bfseries\upshape (ii)\hfill}]
In all other cases admitted by \eqref{hybrid-1}, it holds
\[
e_k(\id_L) \ \sim \ k^{-\frac{s_1-s_2}{d} - \frac{(r_1)_+}{d} + \frac{(r_2)_+}{d}},\quad k\in\nn.
\]
\eli
\end{remark}

Parallel to the end of Section~\ref{sect-comp} we now collect some consequences and special cases of Theorem~\ref{ek-tau}. We begin with the counterpart of Corollary~\ref{comp-tau-eq}, that is, when $\tau_1=\tau_2\geq 0$.

\begin{corollary}  \label{ek-tau-eq}
  Let  $s_i\in \real$, $0<q_i\leq\infty$, $0<p_i\leq \infty$ (with $p_i<\infty$ in case of $A=F$), $i=1,2$, and $\tau\geq 0$. Assume that \eqref{same-tau} is satisfied. Then we obtain for the entropy numbers of the compact embedding
\begin{equation} 
	 \id_{\tau}: A^{s_1,\tau}_{p_1,q_1}(\Omega )\hookrightarrow A^{s_2,\tau}_{p_2,q_2}(\Omega )
\end{equation}
that
\[
e_k(\id_\tau: A^{s_1,\tau}_{p_1,q_1}(\Omega )\hookrightarrow A^{s_2,\tau}_{p_2,q_2}(\Omega )) \ \sim \ k^{-\frac{s_1-s_2}{d} - (\tau-\frac{1}{p_1})_+ + (\tau-\frac{1}{p_2})_+},\quad k\in\nn.
\]
\end{corollary}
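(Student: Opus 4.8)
The plan is to derive this corollary directly from Theorem~\ref{ek-tau} by setting $\tau_1=\tau_2=\tau$. Since the hypothesis \eqref{same-tau} is precisely the specialisation of the compactness condition \eqref{tau-comp-u2} to equal parameters $\tau_1=\tau_2=\tau$ (recall Corollary~\ref{comp-tau-eq}), the embedding $\id_\tau$ is compact and Theorem~\ref{ek-tau} is applicable. The entire task therefore reduces to deciding, for equal $\tau_i$, which of the two alternatives (i) or (ii) of that theorem is in force.

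The crucial observation I would establish is that case (i) of Theorem~\ref{ek-tau} \emph{cannot} occur once $\tau_1=\tau_2=\tau$. Indeed, with equal parameters the hypothesis \eqref{tau-comp-7a} reads $\tau\,\frac{p_1}{p_2}>\tau$, which forces $\tau>0$ and $p_1>p_2$, hence $\frac{1}{p_1}-\frac{1}{p_2}<0$. On the other hand, the admissible window for $\frac{s_1-s_2}{d}$ in \eqref{tau-comp-7b} is non-empty only if its endpoints are correctly ordered, that is, only if
\[
(1-p_1\tau)\Big(\frac{1}{p_1}-\frac{1}{p_2}\Big)<\frac{1}{p_1}-\frac{1}{p_2},
\qquad\text{i.e.}\qquad
p_1\tau\Big(\frac{1}{p_1}-\frac{1}{p_2}\Big)>0.
\]
Since case (i) also requires $\tau<\frac{1}{p_1}$ and we have just seen that $\tau>0$, the factor $p_1\tau$ is strictly positive, so this would force $\frac{1}{p_1}-\frac{1}{p_2}>0$, i.e. $p_1<p_2$, contradicting $p_1>p_2$. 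Thus the conditions \eqref{tau-comp-7a} and \eqref{tau-comp-7b} are mutually exclusive for $\tau_1=\tau_2$, and case (i) is vacuous.

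Consequently the embedding always falls under case (ii) of Theorem~\ref{ek-tau}, and substituting $\tau_1=\tau_2=\tau$ into the asymptotics \eqref{tau-comp-3} yields
\[
e_k(\id_\tau)\ \sim\ k^{-\frac{s_1-s_2}{d}-(\tau-\frac{1}{p_1})_++(\tau-\frac{1}{p_2})_+},\qquad k\in\nn,
\]
which is exactly the claimed result. The only non-routine point is the short sign analysis above confirming that the exceptional regime (i) is empty for equal $\tau_i$; everything else is a direct specialisation, and no separate treatment of the $B$- and $F$-cases is required, since Theorem~\ref{ek-tau} already handles both simultaneously.
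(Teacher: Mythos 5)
Your proposal is correct and follows essentially the same route as the paper: specialise Theorem~\ref{ek-tau} to $\tau_1=\tau_2=\tau$ and observe that part (i) is vacuous there. The paper merely asserts that part (i) cannot occur, whereas you supply the (correct) sign analysis showing that \eqref{tau-comp-7a} forces $p_1>p_2$ while a non-empty window in \eqref{tau-comp-7b} forces $p_1<p_2$; this is a welcome elaboration, not a deviation.
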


\begin{proof}
This follows immediately from an application of Theorem~\ref{ek-tau} with $\tau_1=\tau_2=\tau$. Note that part (i) of Theorem~\ref{ek-tau} cannot appear in this setting.
\end{proof}

\begin{remark}
The above result is again well-known for $\tau=0$. We find it interesting to note that for sufficiently small $\tau$, that is, when $0\leq \tau\leq \min\{\frac{1}{p_1}, \frac{1}{p_2}\}$, the asymptotic behaviour for the entropy numbers remains the same,
\[
e_k\left(\id_A: \Ae(\Omega) \hookrightarrow \Az(\Omega)\right) \sim e_k\left(\id_\tau: A^{s_1,\tau}_{p_1,q_1}(\Omega) \hookrightarrow A^{s_2,\tau}_{p_2,q_2}(\Omega)\right) \sim k^{-\frac{s_1-s_2}{d}},\ \ 0\leq \tau\leq \min\left\{\frac{1}{p_1}, \frac{1}{p_2}\right\}.
\]
For sufficiently large $\tau$, also the $\tau$-dependence disappears, due to the coincidence stated in Proposition~\ref{yy02}, that is,
\[
e_k\left(\id_\tau: A^{s_1,\tau}_{p_1,q_1}(\Omega) \hookrightarrow A^{s_2,\tau}_{p_2,q_2}(\Omega)\right) \ \sim \ k^{-\frac{s_1-s_2}{d}+\frac{1}{p_1}-\frac{1}{p_2}},\quad  \tau\geq
 \max\left\{\frac{1}{p_1}, \frac{1}{p_2}\right\}.
\]
So only `in between', that is, for $\min\{\frac{1}{p_1}, \frac{1}{p_2}\} < \tau <  \max\{\frac{1}{p_1}, \frac{1}{p_2}\}$ 
the Morrey parameter $\tau$ 
influences the asymptotic behaviour of entropy numbers. Plainly, for $p_1=p_2$ this case is impossible, so in that case we would really have some `$\tau$-shift',
\[
e_k\left(\id_A: A^{s_1}_{p,q_1}(\Omega) \hookrightarrow A^{s_2}_{p,q_2}(\Omega)\right) \ \sim \
e_k\left(\id_\tau: A^{s_1,\tau}_{p,q_1}(\Omega) \hookrightarrow A^{s_2,\tau}_{p,q_2}(\Omega)\right) \ \sim \ k^{-\frac{s_1-s_2}{d}},\quad  \tau\geq 0.
\]
\end{remark}

We come to the counterpart of Corollary~\ref{MorreyintoLinfty}.

\begin{corollary}  \label{ek-Morrey-Linfty}
  Let  $s\in \real$, $0<p\le u<\infty$, $q\in(0,\infty]$, and $s>\frac{d}{u}$.  Then the entropy numbers of the compact embeddings $\id : \MA(\Omega) \hookrightarrow L_{\infty}(\Omega)$ and $\id : \MA(\Omega)\hookrightarrow \bmo(\Omega)$ behave like
\[
e_k \left(\id:\MA(\Omega) \hookrightarrow L_{\infty}(\Omega)\right)\ \sim \ e_k\left(\id: \MA(\Omega)\hookrightarrow \bmo(\Omega)\right) \ \sim \ k^{-\frac{s}{d}},\quad k\in\nn.
\]
\end{corollary}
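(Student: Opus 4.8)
The plan is to exploit the fact that, although neither $L_\infty(\Omega)$ nor $\bmo(\Omega)$ belongs to the scales $\at$ or $\MA$, both are squeezed between two classical Besov spaces of smoothness zero,
\[
B^0_{\infty,1}(\Omega)\hookrightarrow L_\infty(\Omega)\hookrightarrow B^0_{\infty,\infty}(\Omega),\qquad
B^0_{\infty,1}(\Omega)\hookrightarrow \bmo(\Omega)\hookrightarrow B^0_{\infty,\infty}(\Omega),
\]
the second chain coming from $\bmo(\Omega)=F^0_{\infty,2}(\Omega)$, recall \eqref{ft=bmo}, together with $B^0_{\infty,1}\hookrightarrow F^0_{\infty,2}\hookrightarrow F^0_{\infty,\infty}=B^0_{\infty,\infty}$. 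By the ideal property and multiplicativity \eqref{e-multi} of entropy numbers, for either target $Y\in\{L_\infty(\Omega),\bmo(\Omega)\}$ one has $e_k(\MA(\Omega)\to Y)\lesssim e_k(\MA(\Omega)\to B^0_{\infty,1}(\Omega))$ and $e_k(\MA(\Omega)\to Y)\gtrsim e_k(\MA(\Omega)\to B^0_{\infty,\infty}(\Omega))$. Hence it suffices to establish the two one-sided estimates $e_k(\MA(\Omega)\to B^0_{\infty,1}(\Omega))\lesssim k^{-s/d}$ and $e_k(\MA(\Omega)\to B^0_{\infty,\infty}(\Omega))\gtrsim k^{-s/d}$, which then pin down $e_k(\MA(\Omega)\to Y)\sim k^{-s/d}$.

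For the upper estimate I would factor the source through the Besov-type space $Z:=B^{s,\frac1p-\frac1u}_{p,v}(\Omega)$. For $\mathcal{A}=\mathcal{N}$ the coincidence \eqref{N-BT-equal} and the monotonicity \eqref{elem-1} give $\MB(\Omega)\hookrightarrow\mathcal{N}^s_{u,p,\infty}(\Omega)=B^{s,\frac1p-\frac1u}_{p,\infty}(\Omega)$ ($v=\infty$); for $\mathcal{A}=\mathcal{E}$ the identification \eqref{fte} together with \eqref{elem-tau} gives $\MF(\Omega)=F^{s,\frac1p-\frac1u}_{p,q}(\Omega)\hookrightarrow B^{s,\frac1p-\frac1u}_{p,\max\{p,q\}}(\Omega)$ ($v=\max\{p,q\}$), and $Z\hookrightarrow B^0_{\infty,1}(\Omega)$ follows from \eqref{010319} and $s-\frac du>0$. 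The crucial point is that $e_k(Z\to B^0_{\infty,1}(\Omega))\sim k^{-s/d}$ by Theorem~\ref{ek-tau}: here $\tau_1=\frac1p-\frac1u<\frac1p$, while $B^0_{\infty,1}=B^{0,0}_{\infty,1}$ has $p_2=\infty$, $\tau_2=0=\frac1{p_2}$ and $q_2=1<\infty$, so \eqref{tau-comp-7a} fails and we are in part (ii), whose exponent collapses to $-\frac sd$ since $(\tau_1-\frac1{p_1})_+=(\tau_2-\frac1{p_2})_+=0$ (the condition $s>\frac du$ being exactly \eqref{tau-comp-u2} in this case). Composing $\MA(\Omega)\hookrightarrow Z\hookrightarrow B^0_{\infty,1}(\Omega)\hookrightarrow Y$ then gives $e_k(\MA(\Omega)\to Y)\lesssim k^{-s/d}$.

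For the lower estimate I would insert a classical Besov space below the source. Since $L_u(\rn)=\mathcal{M}_{u,u}(\rn)\hookrightarrow\mathcal{M}_{u,p}(\rn)$, recall \eqref{LinM}, one has $B^s_{u,v}(\Omega)=\mathcal{N}^s_{u,u,v}(\Omega)\hookrightarrow\mathcal{N}^s_{u,p,v}(\Omega)$, whence $B^s_{u,q}(\Omega)\hookrightarrow\MB(\Omega)$, respectively $B^s_{u,\min\{p,q\}}(\Omega)\hookrightarrow\mathcal{N}^s_{u,p,\min\{p,q\}}(\Omega)\hookrightarrow\MF(\Omega)$ using \eqref{elem}. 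Combined with $\MA(\Omega)\hookrightarrow Y\hookrightarrow B^0_{\infty,\infty}(\Omega)$ this yields the factorisation $B^s_{u,v}(\Omega)\hookrightarrow\MA(\Omega)\to Y\hookrightarrow B^0_{\infty,\infty}(\Omega)$, whose outer (natural) embedding is the classical one with, by \eqref{ek-id_A} and $\delta_+=s-\frac du>0$, entropy numbers $e_k(B^s_{u,v}(\Omega)\to B^0_{\infty,\infty}(\Omega))\sim k^{-s/d}$. Multiplicativity then forces $e_k(\MA(\Omega)\to Y)\gtrsim k^{-s/d}$, and together with the upper estimate this proves $e_k(\MA(\Omega)\to Y)\sim k^{-s/d}$ for both targets.

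The main obstacle is the sharp upper bound into $L_\infty(\Omega)$. The naive route $\MA(\Omega)\hookrightarrow B^{s-d/u}_{\infty,\infty}(\Omega)\hookrightarrow L_\infty(\Omega)$ coming from \eqref{010319} only yields the weaker rate $k^{-(s-d/u)/d}$, because it discards $\frac du$ units of smoothness. To recover the sharp exponent $-\frac sd$ one must route through a space of smoothness exactly $0$ that still embeds into $L_\infty$ — this is the role of $B^0_{\infty,1}(\Omega)$ — and then invoke the Morrey-refined estimate in part (ii) of Theorem~\ref{ek-tau}, which is precisely where the gain over the crude embedding is stored. The remaining bookkeeping is merely to check that both source scales $\mathcal{N}$ and $\mathcal{E}$ reduce, via \eqref{N-BT-equal}, \eqref{elem-1}, \eqref{fte} and \eqref{elem-tau}, to the same Besov-type space $Z$, so that a single application of Theorem~\ref{ek-tau} settles the upper bound in both cases simultaneously.
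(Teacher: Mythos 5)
Your argument is correct and follows essentially the same route as the paper: sandwiching both targets between $B^0_{\infty,1}(\Omega)$ and $B^0_{\infty,\infty}(\Omega)$, reducing the source to a Besov-type space $B^{s,\frac1p-\frac1u}_{p,v}(\Omega)$ via \eqref{N-BT-equal}, \eqref{fte} and \eqref{elem-tau}, and invoking part (ii) of Theorem~\ref{ek-tau} (your check that \eqref{tau-comp-7a} fails for $p_2=\infty$, $\tau_2=0$, so that the exponent collapses to $-s/d$, is exactly the relevant point). The only cosmetic differences are that the paper handles $\bmo(\Omega)$ directly through the coincidence $\bmo(\Omega)=F^{0,1/r}_{r,2}(\Omega)$ with finite $r$, and obtains the lower bound from another application of Theorem~\ref{ek-tau}(ii) resp.\ Theorem~\ref{ek-NE}(ii) rather than from the classical estimate \eqref{ek-id_A} after inserting $B^s_{u,v}(\Omega)$; both variants are valid.
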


\begin{proof}
We can apply part (ii) of Theorem~\ref{ek-tau} due to the coincidence $\bmo(\Omega)= F^{0,1/r}_{r,2}(\Omega)$, $0<r<\infty$, and the well-known embeddings $B^0_{\infty,1}(\Omega)\hookrightarrow L_\infty(\Omega)\hookrightarrow B^0_{\infty,\infty}(\Omega)$, i.e. we work with the assumption $\tau_1p_1< 1=\tau_2p_2$ or $\tau_2=\frac{1}{p_2}=0$.


{
{\em Step 1}. Let us first consider the embedding $\MA(\Omega)\hookrightarrow \bmo(\Omega)$. We get the result for $\mathcal{A}=\mathcal{E}$ by applying Theorem~\ref{ek-tau}(ii) with $\MF(\Omega) = F^{s,\tau}_{p,q}(\Omega)$, $0\leq \tau=\frac1p-\frac1u$, that is
\begin{equation*}
e_k \left(\id : \MF(\Omega) \hookrightarrow \bmo(\Omega)\right) \ \sim \ e_k \left(\id_\tau: \ft (\Omega) \hookrightarrow F^{0, \frac{1}{r}}_{r,2}(\Omega)\right) \ \sim \ k^{-\frac{s}{d}}, \quad k \in \nn.
\end{equation*}
As for the case $\mathcal{A}=\mathcal{N}$, we use \eqref{N-BT-equal}, \eqref{ftbt} and \eqref{MB=B} and obtain for $k \in \nn$ 
\[
e_k \left( \id_\tau : B^{s,0}_{u,q}(\Omega) \hookrightarrow B^{0,\frac12}_{2,2}(\Omega)\right) \ \lesssim \ e_k\left(\id : \MB(\Omega) \hookrightarrow \bmo(\Omega)\right) \ \lesssim \ e_k \left(\id_\tau: B^{s, \tau}_{p, \infty} \hookrightarrow B^{0, \frac{1}{2}}_{2,2}(\Omega)\right), 
\]
which in view of Theorem~\ref{ek-tau}(ii) gives the desired result.\\

{\em Step 2}. Now we turn to the embedding $\id : \MA (\Omega) \hookrightarrow L_{\infty}(\Omega)$ and make use of the above embeddings to the $B$-scale. By \eqref{elem-tau} and \eqref{fte} with $0\leq \tau=\frac1p-\frac1u$, we have
\[
B^{s, \tau}_{p, \min\{p,q\}} (\Omega) \hookrightarrow \MF(\Omega)= \ft(\Omega) \hookrightarrow B^{s, \tau}_{p, \max\{p,q\}}(\Omega)\hookrightarrow 
B^0_{\infty, 1}(\Omega) \hookrightarrow L_{\infty}(\Omega) \hookrightarrow B^0_{\infty, \infty}(\Omega).
\]
According to Theorem~\ref{ek-tau}(ii), we then get
\[
e_k(\id : \MF (\Omega) \hookrightarrow L_{\infty} (\Omega)) \ \lesssim \ e_k(\id_\tau :B^{s, \tau}_{p, \max\{p,q\}}(\Omega) \hookrightarrow  B^{0}_{\infty, 1}(\Omega)) \ \lesssim \ k^{-\frac{s}{d}}, \quad k \in \nn,
\]
and 
\[
e_k(\id : \MF (\Omega) \hookrightarrow L_{\infty} (\Omega)) \ \gtrsim \ e_k(\id_\tau :B^{s, \tau}_{p, \min\{p,q\}}(\Omega) \hookrightarrow  B^{0}_{\infty, \infty}(\Omega)) \ \gtrsim \ k^{-\frac{s}{d}}, \quad k \in \nn,
\]
whenever $s> \frac{d}{u}$, which completes the proof for $\mathcal{A}=\mathcal{E}$.

Similarly we obtain the result for $\mathcal{A}=\mathcal{N}$. Namely, we have
\[
e_k \left(\id _{\mathcal{N}}:\MB (\Omega) \hookrightarrow \mathcal{N}^{0}_{\infty, \infty, \infty}(\Omega) \right)\ \lesssim \ e_k( \id)\ \lesssim \ e_k \left(\id _{\mathcal{N}}:\mathcal{N}^{s}_{u,p, \infty}(\Omega) \hookrightarrow \mathcal{N}^{0}_{\infty, \infty, 1}(\Omega) \right),
\]
which in view of Theorem~\ref{ek-NE}(ii) gives the result. 
}

\end{proof}


\begin{remark}
Corollary~\ref{ek-Morrey-Linfty} extends some result on entropy numbers for the target space $L_\infty(\Omega)$ obtained in \cite{HaSk-krakow}. There we could only cover the case $s>\frac{d}{p}$. 
\end{remark}

In a parallel way we can further characterise the compactness of the embeddings described by Corollary~\ref{atauinLinf}.

\begin{corollary}\label{ek-atau-Linf}
 Let $s\in\real$, $\tau\geq 0$, $0< p,q\leq\infty$ (with $p<\infty$ if $A=F$), and assume $s> d(\frac1p-\tau)$. Then the entropy numbers of the compact embeddings $\id_\tau: \at(\Omega) \hookrightarrow L_{\infty}(\Omega)$ and $\id_\tau: \at(\Omega )\hookrightarrow \bmo(\Omega)$ behave like
\[
e_k\left(\id:\at(\Omega) \hookrightarrow L_{\infty}(\Omega)\right)\ \sim \ e_k\left(\id: \at(\Omega )\hookrightarrow \bmo(\Omega)\right) \ \sim \ k^{-\frac{s}{d}-(\tau-\frac1p)_+},\quad k\in\nn.
\]
\end{corollary}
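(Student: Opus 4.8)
The plan is to mirror the argument of Corollary~\ref{ek-Morrey-Linfty}, reducing both target spaces to classical Besov- or Triebel--Lizorkin-type spaces on $\Omega$ and then invoking Theorem~\ref{ek-tau}(ii), just as Corollary~\ref{atauinLinf} reduced the corresponding compactness statement to Theorem~\ref{comp-tau-u}. The two realisations I would exploit are the coincidence $\bmo(\Omega)=F^{0,1/p}_{p,2}(\Omega)$ from \eqref{ft=bmo} and, combining it with \eqref{ftbt}, the identity $\bmo(\Omega)=B^{0,1/2}_{2,2}(\Omega)$, together with the classical sandwich $B^0_{\infty,1}(\Omega)\hookrightarrow L_\infty(\Omega)\hookrightarrow B^0_{\infty,\infty}(\Omega)$. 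Throughout, the hypothesis $s>d(\frac1p-\tau)$ is exactly the compactness condition \eqref{tau-comp-u2} for every embedding that appears, as one checks by evaluating $\critical$ at the relevant endpoint parameters.

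First I would treat the $\bmo$ target. For $A=F$ the embedding $\id_\tau:\ft(\Omega)\hookrightarrow \bmo(\Omega)$ is literally $F^{s,\tau}_{p,q}(\Omega)\hookrightarrow F^{0,1/p}_{p,2}(\Omega)$, so I apply Theorem~\ref{ek-tau} with $s_1=s$, $s_2=0$, $p_1=p_2=p$, $\tau_1=\tau$, $\tau_2=\frac1p$, $q_1=q$, $q_2=2$; for $A=B$ I use instead the target $B^{0,1/2}_{2,2}(\Omega)$, i.e.\ $s_2=0$, $p_2=2$, $\tau_2=\frac12$, $q_2=2$. In both cases $\tau_2=\frac1{p_2}$, and the requirement $\tau_1\frac{p_1}{p_2}>\tau_2$ of part~(i) forces $\tau>\frac1p$, which is incompatible with the standing hypothesis $\tau_1<\frac1{p_1}$ of part~(i); hence we are always in part~(ii). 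Its formula evaluates to $k^{-\frac{s}{d}-(\tau-\frac1p)_++(\tau_2-\frac1{p_2})_+}=k^{-\frac{s}{d}-(\tau-\frac1p)_+}$, which is the claim.

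Next I would handle the $L_\infty$ target exactly as in Corollary~\ref{ek-Morrey-Linfty}. For $A=B$ this is immediate: from $\bt(\Omega)\hookrightarrow B^0_{\infty,1}(\Omega)\hookrightarrow L_\infty(\Omega)\hookrightarrow B^0_{\infty,\infty}(\Omega)$ and the multiplicativity \eqref{e-multi} one gets
\[
e_k\big(\bt(\Omega)\hookrightarrow B^0_{\infty,\infty}(\Omega)\big)\ \lesssim\ e_k(\id_\tau)\ \lesssim\ e_k\big(\bt(\Omega)\hookrightarrow B^0_{\infty,1}(\Omega)\big),
\]
and both outer embeddings are again governed by Theorem~\ref{ek-tau}(ii) (target $B^{0,0}_{\infty,r}$ with $r\in\{1,\infty\}$), giving $k^{-\frac{s}{d}-(\tau-\frac1p)_+}$ on either side. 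For $A=F$ I first interpose the elementary sandwich \eqref{elem-tau}, namely $B^{s,\tau}_{p,\min\{p,q\}}(\Omega)\hookrightarrow\ft(\Omega)\hookrightarrow B^{s,\tau}_{p,\max\{p,q\}}(\Omega)$, so that the chain $B^{s,\tau}_{p,\min\{p,q\}}(\Omega)\hookrightarrow\ft(\Omega)\hookrightarrow B^{s,\tau}_{p,\max\{p,q\}}(\Omega)\hookrightarrow B^0_{\infty,1}(\Omega)\hookrightarrow L_\infty(\Omega)\hookrightarrow B^0_{\infty,\infty}(\Omega)$ reduces everything to $B$-type embeddings, to which the $A=B$ computation applies verbatim.

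The one point demanding care---and the only genuine obstacle---is the bookkeeping at the two endpoint situations $\tau_2=\frac1{p_2}$ (for $\bmo$) and $\tau_2=0$, $p_2=\infty$ (for the $L_\infty$ sandwich): in each application I must verify that part~(i) of Theorem~\ref{ek-tau} cannot occur and that the compactness hypothesis holds. Both reduce to short checks. As noted, $\tau_1\frac{p_1}{p_2}>\tau_2$ is incompatible with $\tau_1<\frac1{p_1}$ at these endpoints (for $p_2=\infty$, $\tau_2=0$ the inequality reads $0>0$), so part~(ii) always governs; and evaluating $\critical$ yields $\frac1p-\tau$ in every case, so \eqref{tau-comp-u2} is precisely $s>d(\frac1p-\tau)$. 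Everything else is a routine use of multiplicativity and the already-established Theorem~\ref{ek-tau}; for $\tau\geq\frac1p$ one may additionally cross-check the outcome against the classical Besov entropy asymptotics via the coincidence in Proposition~\ref{yy02}.
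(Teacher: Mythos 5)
Your proposal is correct and follows essentially the same route as the paper: the paper's proof of this corollary simply says to proceed as in Corollary~\ref{ek-Morrey-Linfty}, i.e.\ identify $\bmo(\Omega)$ with $F^{0,1/p}_{p,2}(\Omega)$ (resp.\ $B^{0,1/2}_{2,2}(\Omega)$), sandwich $L_\infty(\Omega)$ between $B^0_{\infty,1}(\Omega)$ and $B^0_{\infty,\infty}(\Omega)$, reduce the $F$-case via \eqref{elem-tau}, and apply Theorem~\ref{ek-tau}(ii) after noting that part~(i) is excluded. Your endpoint checks (the incompatibility of \eqref{tau-comp-7a} with $\tau_1<\frac1{p_1}$ when $\tau_2=\frac1{p_2}$, and $\critical=\frac1p-\tau$ throughout) are exactly the verifications the paper leaves implicit.
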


\begin{proof}
We proceed as above and observe that Theorem~\ref{ek-tau}(i) is again excluded for the same reasons.
\end{proof}

Finally we deal with the target space $L_r(\Omega)$, $1\leq r<\infty$. First we recall our result obtained in \cite[Corollary~4.8]{HaSk-morrey-comp}.

\begin{corollary}\label{C-ekintoLr}
  Let $1\leq r<\infty$, $0<p\leq u<\infty$, or $p=u=\infty$, and $s>d \frac{p}{u}\left(\frac1p-\frac1r\right)_+$.
\bli
\item[{\upshape\bfseries (i)}] If $ p\geq r$ and $s>0$, or $p<r$ and $s>d(\frac1p-\frac1r)$, then
\begin{equation}
    e_k \big(\id: \MA(\Omega)\hookrightarrow L_r(\Omega)\big) \ \sim \  k^{-\frac{s}{d}} .
\end{equation}
\item[{\upshape\bfseries (ii)}]
  If $p<r$ and $\ d \frac{p}{u} \Big(\frac{1}{p}- \frac{1}{r}\Big)< s \le  d\Big(\frac{1}{p} -\frac{1 }{r}\Big)$, then there exists some $c>0$ and for any $\varepsilon>0$ some $c_\varepsilon>0$ such that for all $k\in\nn$,
\begin{equation}\label{new_morrey_Lr}
  c \ k^{- \frac{u}{u-p}(\frac{s}{d}-\frac{p}{u}(\frac{1}{p}- \frac{1}{r}) )}\ \le \ e_k \big(\id: \MA(\Omega)\hookrightarrow L_r(\Omega)\big)\ \le \ c_\varepsilon \ k^{- \frac{u}{u-p}(\frac{s}{d}-\frac{p}{u}(\frac{1}{p}- \frac{1}{r}) )+\varepsilon}.
\end{equation}
\eli
\end{corollary}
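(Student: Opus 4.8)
The plan is to obtain this corollary as the special case of Theorem~\ref{ek-NE} in which the target space is $L_r(\Omega)$. Although it is recalled from \cite[Corollary~4.8]{HaSk-morrey-comp}, within the present framework it follows by the sandwiching device used repeatedly in Section~\ref{sect-comp}: one traps $L_r(\Omega)$, $1\le r<\infty$, between two spaces to which Theorem~\ref{ek-NE} applies. For $\mathcal{A}=\mathcal{E}$ and $1<r<\infty$ one may use the identity $L_r(\Omega)=\mathcal{E}^0_{r,r,2}(\Omega)$ (the $\Omega$-counterpart of \eqref{E-Lp}) directly, so that $\id$ is literally an embedding $\mathcal{E}^s_{u,p,q}(\Omega)\hookrightarrow\mathcal{E}^0_{r,r,2}(\Omega)$ covered by Theorem~\ref{ek-NE} with $s_1=s$, $s_2=0$, $u_1=u$, $p_1=p$, $q_1=q$ and $u_2=p_2=r$, $q_2=2$. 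In the remaining situations one uses instead the Besov sandwiching
\[
B^0_{r,1}(\Omega)=\mathcal{N}^0_{r,r,1}(\Omega)\hookrightarrow L_r(\Omega)\hookrightarrow \mathcal{N}^0_{r,r,\infty}(\Omega)=B^0_{r,\infty}(\Omega),
\]
valid for every $1\le r<\infty$ by \eqref{MB=B}.

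First I would verify that the parameters match. Substituting $s_2=0$, $u_2=p_2=r$ into \eqref{bd3acomp} gives
\[
\frac{s}{d}>\max\Big\{0,\ \tfrac1u-\tfrac1r,\ \tfrac{p}{u}\big(\tfrac1p-\tfrac1r\big)\Big\},
\]
and since $p\le u$ implies $\tfrac{p}{ur}\le\tfrac1r$, the last entry dominates the middle one, so the maximum equals $\tfrac{p}{u}(\tfrac1p-\tfrac1r)_+$; thus the standing hypothesis $s>d\tfrac{p}{u}(\tfrac1p-\tfrac1r)_+$ is exactly the compactness condition. Next I would match the two dichotomies. The constraints of Theorem~\ref{ek-NE}(i), namely $\tfrac1p-\tfrac1r\ge\tfrac{s}{d}>\tfrac{p}{u}(\tfrac1p-\tfrac1r)$ and $\tfrac{u_2}{p_2}=1<\tfrac{u}{p}$ (that is, $p<u$ and $p<r$), describe precisely the region of part~(ii) here, with the exponent $\tfrac{u}{u-p}\big(\tfrac{s}{d}-\tfrac{p}{u}(\tfrac1p-\tfrac1r)\big)$ being the specialisation of the exponent in \eqref{new_morrey_fu}; note that this region is empty for $p=u$, consistent with the strict inequality $p<u$ forced by Theorem~\ref{ek-NE}(i). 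Every other admissible choice falls under Theorem~\ref{ek-NE}(ii) and produces the rate $k^{-s/d}$, which is part~(i).

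The sandwiching itself rests on the multiplicativity \eqref{e-multi}. For $\mathcal{A}=\mathcal{N}$ the embedding $L_r(\Omega)\hookrightarrow \mathcal{N}^0_{r,r,\infty}(\Omega)$ yields the lower bound $e_k(\id)\gtrsim e_k(\mathcal{N}^s_{u,p,q}\!\to\!\mathcal{N}^0_{r,r,\infty})$, while $\mathcal{N}^0_{r,r,1}(\Omega)\hookrightarrow L_r(\Omega)$ yields the upper bound $e_k(\id)\lesssim e_k(\mathcal{N}^s_{u,p,q}\!\to\!\mathcal{N}^0_{r,r,1})$; as Theorem~\ref{ek-NE} is independent of the fine parameters $q_1,q_2$, both right-hand sides share the same asymptotics and the estimate closes. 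When $\mathcal{A}=\mathcal{E}$ and $r=1$ the source must additionally be converted to an $\mathcal{N}$-space: inserting $\mathcal{N}^s_{u,p,\min\{p,q\}}(\Omega)\hookrightarrow\mathcal{E}^s_{u,p,q}(\Omega)\hookrightarrow\mathcal{N}^s_{u,p,\infty}(\Omega)$ from \eqref{elem} (valid for $p<u$; the case $p=u$ is classical and renders part~(ii) vacuous) reduces both the upper and lower quantities to pure $\mathcal{N}\to\mathcal{N}$ entropy numbers, to which Theorem~\ref{ek-NE} again applies.

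The step demanding the most attention is not any single estimate but the bookkeeping in these chains: each application of \eqref{e-multi} must be oriented so that the auxiliary space sits on the correct side, and one must confirm that the resulting upper and lower quantities are governed by the \emph{same} branch of Theorem~\ref{ek-NE}, so that they indeed coincide asymptotically. The only genuinely special point is the endpoint $r=1$, where the clean identification $L_r(\Omega)=\mathcal{E}^0_{r,r,2}(\Omega)$ is unavailable and one is forced to route everything through the Besov sandwiching above.
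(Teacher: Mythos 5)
Your argument is correct, and it is worth noting that the paper itself gives \emph{no} proof of this corollary: it is merely recalled from \cite[Corollary~4.8]{HaSk-morrey-comp}. So you are supplying a derivation where the text has only a citation. What you do is exactly in the spirit of how the paper handles its other $L_r$-type consequences (compare the proofs of Corollaries~\ref{ek-Morrey-Linfty} and \ref{ek-atau-Lr}, which use the same sandwich $B^0_{r,1}(\Omega)\hookrightarrow L_r(\Omega)\hookrightarrow B^0_{r,\infty}(\Omega)$ together with the multiplicativity \eqref{e-multi} and the $q$-independence of the asymptotic rates). Your bookkeeping checks out: substituting $s_2=0$, $u_2=p_2=r$ into \eqref{bd3acomp} does reduce the maximum to $\frac{p}{u}\bigl(\frac1p-\frac1r\bigr)_+$ because $p\le u$ makes the third entry dominate the second; the region of Theorem~\ref{ek-NE}(i) specialises precisely to part~(ii) of the corollary (with the case $p=u$ correctly identified as vacuous there); the exponent in \eqref{new_morrey_fu} specialises to that of \eqref{new_morrey_Lr}; and the orientation of the two factorizations (lower bound through $L_r(\Omega)\hookrightarrow \mathcal{N}^0_{r,r,\infty}(\Omega)$, upper bound through $\mathcal{N}^0_{r,r,1}(\Omega)\hookrightarrow L_r(\Omega)$) is the right one, with both outer quantities governed by the same branch of Theorem~\ref{ek-NE} so the estimates close, the $\varepsilon$-loss surviving only in the upper bound as in \eqref{new_morrey_Lr}. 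Your separate treatment of $\mathcal{A}=\mathcal{E}$ via $L_r(\Omega)=\mathcal{E}^0_{r,r,2}(\Omega)$ for $1<r<\infty$ and via \eqref{elem} (respectively the classical \eqref{elem-tau} with $\tau=0$ when $p=u$) for $r=1$ is sound, though strictly speaking the Besov sandwich combined with \eqref{elem} would have handled all values $1\le r<\infty$ uniformly, so the case split is a convenience rather than a necessity.
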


Now we can strengthen our above compactness result Corollary~\ref{atauinLr} as follows.

\begin{corollary}\label{ek-atau-Lr}
 Let $s\in\real$, $0< p,q\leq\infty$ (with $p<\infty$ if $A=F$), $1\leq r<\infty$, $\tau> 0$. Assume that \eqref{lim-12} is satisfied.
\bli
\item[{\upshape\bfseries (i)}]
If
\[\tau<\frac1p,\quad  p<r, \quad \text{and}\quad  (1-p\tau)\left(\frac1p-\frac1r\right)<\frac{s}{d}<\frac1p-\frac1r,
\]
 then there exists some $c>0$ and for any $\varepsilon>0$ some $c_\varepsilon>0$ such that for all $k\in\nn$,
\[
  c \ k^{- \frac{1}{p \tau}(\frac{s}{d}-(1-p\tau)(\frac{1}{p}- \frac{1}{r}) )} \ \le \ e_k \left(\id :\at(\Omega)\hookrightarrow L_r(\Omega)\right)\ \le \ c_\varepsilon \ k^{- \frac{1}{p\tau}(\frac{s}{d}-(1-p\tau)(\frac{1}{p}- \frac{1}{r}) )+\varepsilon}.
\]
\item[{\hfill\bfseries\upshape (ii)\hfill}]
In all other cases admitted by \eqref{lim-12}, it holds
\[
e_k\left(\id :\at(\Omega)\hookrightarrow L_r(\Omega)\right) \ \sim \ k^{-\frac{s}{d} - (\tau-\frac{1}{p})_+ },\quad k\in\nn.
\]
\eli
\end{corollary}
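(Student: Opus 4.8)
The plan is to reduce the target $L_r(\Omega)$ to classical Besov spaces and then invoke Theorem~\ref{ek-tau}, exactly as the compactness statement Corollary~\ref{atauinLr}(ii) was derived from Theorem~\ref{comp-tau-u}. For $1\le r<\infty$ one has the elementary sandwich $B^0_{r,1}(\Omega)\hookrightarrow L_r(\Omega)\hookrightarrow B^0_{r,\infty}(\Omega)$, and both outer spaces are instances of the scale with $s_2=0$, $p_2=r$, $\tau_2=0$ and $q_2\in\{1,\infty\}$. Factorising $\id$ through these inclusions and using the multiplicativity \eqref{e-multi}, I would obtain
\[
e_k\big(\id_\tau:\at(\Omega)\hookrightarrow B^0_{r,\infty}(\Omega)\big)\ \lesssim\ e_k\big(\id:\at(\Omega)\hookrightarrow L_r(\Omega)\big)\ \lesssim\ e_k\big(\id_\tau:\at(\Omega)\hookrightarrow B^0_{r,1}(\Omega)\big).
\]
Since the asymptotics furnished by Theorem~\ref{ek-tau} are independent of the fine parameters $q_1,q_2$, the two bounding quantities have the same order, pinning down $e_k(\id:\at(\Omega)\hookrightarrow L_r(\Omega))$. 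For $A=B$ this is immediate (a Besov-type source into a classical Besov target is the $B$-case of Theorem~\ref{ek-tau}); for $A=F$ I would first trap the source between Besov-type spaces via \eqref{elem-tau}, $B^{s,\tau}_{p,\min\{p,q\}}(\Omega)\hookrightarrow\ft(\Omega)\hookrightarrow B^{s,\tau}_{p,\max\{p,q\}}(\Omega)$, thereby returning to the $B$-case, and again invoke $q$-independence to close the argument.

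The second step is purely computational: specialising \critical\ to $(\tau_1,\tau_2,p_1,p_2)=(\tau,0,p,r)$ I would verify that it reproduces the compactness threshold \eqref{lim-12}, and that the two regimes of Theorem~\ref{ek-tau} translate into the two cases of the corollary. With $\tau_2=0$, condition \eqref{tau-comp-7a} becomes $\tau\,\tfrac pr>0$, which holds automatically for $\tau>0$, while \eqref{tau-comp-7b} reads $\tfrac1p-\tfrac1r\ge\tfrac sd>(1-p\tau)(\tfrac1p-\tfrac1r)$; satisfiability of this chain forces $p<r$, and Theorem~\ref{ek-tau}(i) then yields precisely the exponent $-\tfrac1{p\tau}\big(\tfrac sd-(1-p\tau)(\tfrac1p-\tfrac1r)\big)$ of part~(i). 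For part~(ii) the exponent of \eqref{tau-comp-3} collapses to $-\tfrac sd-(\tau-\tfrac1p)_++(0-\tfrac1r)_+=-\tfrac sd-(\tau-\tfrac1p)_+$, the asserted order.

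The delicate point, and the one I expect to be the main obstacle, is the upper boundary $\tfrac sd=\tfrac1p-\tfrac1r$ (for $p<r$, $\tau<\tfrac1p$). The corollary places this value in the clean regime~(ii), whereas a \emph{direct} application of Theorem~\ref{ek-tau}(i)---whose range \eqref{tau-comp-7b} is closed from above---only delivers the $\varepsilon$-perturbed two-sided estimate. Here the lower bound is already sharp: the lower estimate in \eqref{tau-comp-9} carries no $\varepsilon$ and, evaluated at the boundary, gives $e_k\gtrsim k^{-s/d}$. The difficulty is the matching clean upper bound $e_k\lesssim k^{-s/d}$, since monotonicity in $s$ only produces further \emph{lower} bounds from the interior clean region, and the Morrey-to-Morrey input Theorem~\ref{ek-NE}(i) (equivalently Corollary~\ref{C-ekintoLr}) is itself only $\varepsilon$-sharp at this value. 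I would attack it by exploiting the identity $L_r(\Omega)=F^0_{r,2}(\Omega)$ from \eqref{E-Lp} together with a limiting/interpolation argument in the target integrability, aiming to remove the $\varepsilon$ on this single limiting hyperplane; short of that, the honest conclusion is that part~(ii) holds with the clean order in the interior and up to an arbitrarily small loss on the boundary $\tfrac sd=\tfrac1p-\tfrac1r$.
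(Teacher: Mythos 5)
Your argument is correct and is essentially the paper's own proof: the published proof of this corollary consists of a single sentence saying that one applies Theorem~\ref{ek-tau} together with the sandwich $B^0_{r,1}(\Omega)\hookrightarrow L_r(\Omega)\hookrightarrow B^0_{r,\infty}(\Omega)$ and follows the same line of arguments as in the preceding corollaries, i.e.\ exactly your factorisation via \eqref{e-multi}, the choice $s_2=0$, $p_2=r$, $\tau_2=0$, $q_2\in\{1,\infty\}$, the $q$-independence of the exponents, and the reduction of the $F$-case to the $B$-case via \eqref{elem-tau}. Your translation of the two regimes of Theorem~\ref{ek-tau} into the two cases of the corollary is also the intended one. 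As for the endpoint $\frac sd=\frac1p-\frac1r$ (with $\tau<\frac1p$, $p<r$): your worry is well founded, but it exposes an imprecision in the corollary's \emph{statement} rather than a gap in your proof. A direct application of Theorem~\ref{ek-tau} places this value in its part (i), since \eqref{tau-comp-7b} is closed from above, and therefore only yields the $\varepsilon$-perturbed two-sided bound there; the paper offers no additional argument upgrading this to a clean estimate, and the underlying Morrey input (Theorem~\ref{ek-NE}(i), equivalently Corollary~\ref{C-ekintoLr}(ii), which explicitly keeps $s\le d(\frac1p-\frac1r)$ in the $\varepsilon$-loss regime) is itself only $\varepsilon$-sharp on that hyperplane. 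The consistent reading is that part (i) of the corollary should carry ``$\le$'' on the right-hand inequality, so that the endpoint belongs to the $\varepsilon$-loss case; with that reading your reduction proves the statement in full, and you should not expect to remove the $\varepsilon$ there with the tools available in this paper.
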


\begin{proof}
We apply Theorem~\ref{ek-tau} and follow otherwise exactly the same line of arguments, in particular, with $B^0_{r,1}(\Omega)\hookrightarrow L_r(\Omega)\hookrightarrow B^0_{r,\infty}(\Omega)$.
\end{proof}

\begin{remark}
Note that we (only) have an influence of $r$ when we are in the proper Morrey case ($0<\tau<\frac1p$) and $s$ is small enough.
\end{remark}

\begin{remark} Due to their similarities, we do not present the special cases when the source or the target space matches the classical spaces $\B$ and $\F$, that is when $\tau_1=0$ or $\tau_2=0$. However, we would like to remark that the result is not symmetric in the sense, that we have different results for both cases. Namely, when $\tau_1=0$, part (i) of Theorem \ref{ek-tau} is excluded, while for the case where $\tau_2=0$ 
the both parts of the theorem are relevant, naturally with the proper adaptations for this particular case.
\end{remark}

\section{Approximation numbers}
Finally we briefly collect some partial results about approximation numbers of the embedding $\id_\tau$, recall their definition \eqref{approx-n}. In \cite{YHMSY} we obtained some first result for approximation numbers $a_k(\id_\tau)$ when the target space was $L_\infty(\Omega)$:
Let $p\in[2,\infty]$ $($with $p<\infty$ in the $F$-case$)$, $q\in(0,\infty]$, $0\leq \tau<\frac1p$ and
$d(\frac1p-\tau)< s < d(\frac1p-\tau)+1$. Then
\begin{equation}
a_k\left(\id : \at(\Omega) \to L_\infty(\Omega)\right) \sim k^{-\frac{s}{d} -\tau + \frac1p}, \quad k\in\nn.
\end{equation}

In \cite{HaSk-krakow} we studied the situation for the embedding $\id_{\mathcal{A}}$ with  the following result.

\begin{proposition}\label{ak_id_A}
  Let  $s_i\in \real$, $0<q_i\leq\infty$, $0<p_i\leq u_i<\infty$, or $p_i=u_i=\infty$, $i=1,2$, with
\begin{equation}\label{ak-3}
s_1>s_2,\quad\text{and}\quad p_1\geq u_2.
\end{equation}
Then
\begin{equation}
  a_k\left( \id_{\mathcal{A}}: \MAe(\Omega )\hookrightarrow \MAz(\Omega )\right)\sim k^{-\frac{s_1-s_2}{d}},\quad k\in\nn.
\end{equation}
\end{proposition}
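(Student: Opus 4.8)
The plan is to sandwich $\id_{\mathcal A}$ between two purely classical embeddings, for which the two-sided estimate of approximation numbers is already recorded in Remark~\ref{remark-ek-ak-class}, and to let the hypothesis $p_1\ge u_2$ do all the work. The elementary building blocks are the Morrey counterparts of \eqref{LinM}, namely the continuous embeddings
\[
A^{s_i}_{u_i,q_i}(\Omega)\hookrightarrow \mathcal A^{s_i}_{u_i,p_i,q_i}(\Omega)\hookrightarrow A^{s_i}_{p_i,q_i}(\Omega),\qquad i=1,2,
\]
which hold for both scales (cf. \eqref{pu-2-infty-3} and \eqref{pu-1-infty-2}). The first observation is that the hypotheses $p_i\le u_i$ and $p_1\ge u_2$ force the full ordering $p_2\le u_2\le p_1\le u_1$; in particular one also has $u_1\ge p_2$. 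This single chain is exactly what makes both auxiliary classical embeddings behave as cleanly as possible.

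\emph{Upper bound.} I would factorise
\[
\MAe(\Omega)\hookrightarrow A^{s_1}_{p_1,q_1}(\Omega)\xrightarrow{\id_A}A^{s_2}_{u_2,q_2}(\Omega)\hookrightarrow \MAz(\Omega),
\]
where the outer maps are the elementary embeddings above and the middle one is a classical embedding. Since $p_1\ge u_2$, the quantity $\delta_+$ from \eqref{delta+} for this classical map equals $s_1-s_2>0$, and the correction term $\varkappa$ vanishes (it is nonzero only when the source integrability is smaller than $2$ and the target integrability larger than $2$, which is impossible here). Hence Remark~\ref{remark-ek-ak-class} gives $a_k(\id_A)\sim k^{-(s_1-s_2)/d}$, and the ideal property of approximation numbers yields $a_k(\id_{\mathcal A})\lesssim k^{-(s_1-s_2)/d}$.

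\emph{Lower bound.} Symmetrically, I would factorise a classical embedding through $\id_{\mathcal A}$:
\[
A^{s_1}_{u_1,q_1}(\Omega)\hookrightarrow \MAe(\Omega)\xrightarrow{\id_{\mathcal A}}\MAz(\Omega)\hookrightarrow A^{s_2}_{p_2,q_2}(\Omega),
\]
the composition being the classical embedding $A^{s_1}_{u_1,q_1}(\Omega)\hookrightarrow A^{s_2}_{p_2,q_2}(\Omega)$. Because $u_1\ge p_2$, again $\delta_+=s_1-s_2$ and $\varkappa=0$, so by Remark~\ref{remark-ek-ak-class} this classical embedding has approximation numbers $\sim k^{-(s_1-s_2)/d}$. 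The ideal property now gives $a_k(\id_{\mathcal A})\gtrsim k^{-(s_1-s_2)/d}$, and combining the two estimates finishes the proof. The degenerate cases $p_i=u_i=\infty$ are covered as well, since then the relevant Morrey spaces collapse to classical Besov spaces and the same factorisations apply verbatim.

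The main point to check, and the only place where the hypothesis $p_1\ge u_2$ is genuinely used, is that both auxiliary classical embeddings \emph{simultaneously} satisfy $\delta_+=s_1-s_2$ and $\varkappa=0$; equivalently, that neither embedding falls into the truly `Morrey' regime in which a nonzero $\varkappa$ or a smaller $\delta_+$ would spoil the rate $k^{-(s_1-s_2)/d}$. A secondary and more routine point is to confirm that the elementary embeddings displayed at the start are available for the $\mathcal E$-scale and not only for $\mathcal N$, which follows from the monotonicity of Morrey spaces in the inner exponent.
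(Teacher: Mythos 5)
Your argument is correct, and it is worth noting that the paper itself does not actually prove Proposition~\ref{ak_id_A}: it simply identifies the statement with \cite[Corollary~3.4(i)]{HaSk-krakow} and asserts that the extension to $p_i=u_i=\infty$ follows from the short proof given there. What you supply is therefore a self-contained substitute. The two factorisations are sound: the chain $p_2\le u_2\le p_1\le u_1$ does follow from the hypotheses, the inner and outer elementary embeddings are exactly those the paper already uses in \eqref{pu-1-infty-2} and \eqref{pu-2-infty-3}, and in both auxiliary classical embeddings the source integrability dominates the target integrability, so $\delta_+=s_1-s_2>0$, $\varkappa=0$, and none of the exceptional cases of \eqref{ak-id_A} (all of which require the source exponent below $2$ and the target exponent above $2$) can occur; the ideal property of approximation numbers then closes both bounds. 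Your treatment of $p_i=u_i=\infty$ is also fine, since $p_1\ge u_2$ forces everything to collapse to classical $B^{s}_{\infty,q}$-spaces when the target is degenerate. One small inaccuracy in your closing remark: the right-hand elementary embedding $\mathcal{A}^{s}_{u,p,q}(\Omega)\hookrightarrow A^{s}_{p,q}(\Omega)$ is not a consequence of monotonicity in the inner Morrey exponent (that only gives \eqref{LinM}, i.e.\ the left-hand embedding); it uses the boundedness of $\Omega$, via $\cM_{u,p}(\Omega)\hookrightarrow L_p(\Omega)$, and is precisely what the paper cites from \cite{hs12b,hs14}. Since you invoke the correct displayed embeddings anyway, this does not affect the validity of the proof.
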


The above proposition coincides with \cite[Corollary~3.4(i)]{HaSk-krakow} apart from the case when $p_i=u_i=\infty$ for $i=1$ or $i=2$. But this extension can easily be verified following the short proof in \cite{HaSk-krakow}. We also refer to \cite[Section~6]{Bai-Si} where also the periodic case and more general Morrey type spaces were studied. \\

Now we give some partial counterpart of Theorem~\ref{ek-tau} in terms of approximation numbers.

\begin{corollary}  \label{ak-tau}
  Let  $s_i\in \real$, $0<q_i\leq\infty$, $0<p_i\leq \infty$ (with $p_i<\infty$ in case of $A=F$), $\tau_i\geq 0$, $i=1,2$.
  Assume that $\ s_1-s_2 > d \ \critical$ and, in addition,
\bli
\item[{\hfill\bfseries\upshape (i)\hfill}]
either $\ \tau_1\geq \frac{1}{p_1}$,
\item[{\hfill\bfseries\upshape (ii)\hfill}]
or $\ \tau_i<\frac{1}{p_i}$, $i=1,2$, with $s_1>s_2$ and $\tau_2\leq \frac{1}{p_2}-\frac{1}{p_1}$.
  \eli
  Then we obtain for the approximation numbers of the compact embedding
\begin{equation*}
	 \id_{\tau}: \ate(\Omega )\hookrightarrow \atz(\Omega )
\end{equation*}
that
\begin{equation}\label{ak-2}
a_k(\id_\tau) \ \sim \ k^{-\frac{s_1-s_2}{d} - (\tau_1-\frac{1}{p_1})_+ + (\tau_2-\frac{1}{p_2})_+},\quad k\in\nn.
\end{equation}
\end{corollary}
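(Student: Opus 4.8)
The plan is to mimic the case-by-case reduction used in the proofs of Theorem~\ref{comp-tau-u} and Theorem~\ref{ek-tau}, but to replace every appeal to the entropy multiplicativity \eqref{e-multi} by the \emph{ideal property} of approximation numbers, namely $a_k(R\circ T\circ S)\le \|R\|\,a_k(T)\,\|S\|$, which holds since the $a_k$ are $s$-numbers. The outer estimates will always be supplied by Proposition~\ref{ak_id_A} for the Morrey scale $\MA$, or by the classical formula \eqref{ak-id_A} in the fully classical situation (where $\varkappa=0$ because the source integrability is $\infty$). I would also use repeatedly that both of these outer asymptotics are independent of the fine indices $q_i$, so that the limiting sub-cases $\tau_i=\frac1{p_i}$ with $q_i<\infty$ can be treated by squeezing the relevant space between two classical Besov spaces differing only in their $q$-index.

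First I would treat case (i), $\tau_1\ge\frac1{p_1}$. If $\tau_1>\frac1{p_1}$, or $\tau_1=\frac1{p_1}$ with $q_1=\infty$, then Proposition~\ref{yy02} (adapted to $\Omega$, recall Remark~\ref{emb-Omega}) identifies the source with $B^{s_1+d(\tau_1-1/p_1)}_{\infty,\infty}(\Omega)$. If in addition $\tau_2\ge\frac1{p_2}$ the target is likewise classical, $B^{s_2+d(\tau_2-1/p_2)}_{\infty,\infty}(\Omega)$, and \eqref{ak-id_A} with $p_1=p_2=\infty$ (hence $\varkappa=0$) yields exactly the exponent in \eqref{ak-2}; if $0\le\tau_2<\frac1{p_2}$ I would pass to the Morrey realization of the target via \eqref{fte} (respectively squeeze between $\MAz(\Omega)$ and $\mathcal N^{s_2}_{u_2,p_2,\infty}(\Omega)$ in the $B$-case) and apply Proposition~\ref{ak_id_A}, whose hypothesis $p_1\ge u_2$ is automatic since the source integrability is $\infty\ge u_2$. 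The residual limiting situation $\tau_1=\frac1{p_1}$, $q_1<\infty$ would be settled by the sandwich $B^{s_1}_{\infty,q_1}(\Omega)\hookrightarrow\ate(\Omega)\hookrightarrow B^{s_1}_{\infty,\infty}(\Omega)$ coming from \eqref{010319} and \cite[Corollary~5.2]{YHSY}, combined with the target treatment above and the $q$-independence just mentioned.

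For case (ii), $\tau_i<\frac1{p_i}$ with $s_1>s_2$ and $\tau_2\le\frac1{p_2}-\frac1{p_1}$, the coincidence \eqref{fte} turns $\id_\tau$ into the Morrey embedding $\id_{\mathcal E}\colon\MFe(\Omega)\to\MFz(\Omega)$ with $\frac1{u_i}=\frac1{p_i}-\tau_i$ in the $F$-case, while the chains \eqref{tau2small-1}--\eqref{tau2small-2} sandwich $\id_\tau$ between $\id_{\mathcal N}\colon\MBe(\Omega)\to\mathcal N^{s_2}_{u_2,p_2,\infty}(\Omega)$ from below and $\id_{\mathcal N}\colon\mathcal N^{s_1}_{u_1,p_1,\infty}(\Omega)\to\MBz(\Omega)$ from above in the $B$-case. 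The decisive observation is that the assumption $\tau_2\le\frac1{p_2}-\frac1{p_1}$ is exactly $p_1\ge u_2$, so Proposition~\ref{ak_id_A} applies to all three embeddings and delivers $a_k\sim k^{-(s_1-s_2)/d}$, which is \eqref{ak-2} since both $(\tau_i-\frac1{p_i})_+$ vanish in this regime.

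The main obstacle, and the reason the statement is only partial, is precisely the hypothesis $p_1\ge u_2$ in Proposition~\ref{ak_id_A}. Outside cases (i) and (ii) one is driven into the regime $p_1<u_2$ for the underlying Morrey embedding, where the asymptotics of $a_k(\id_{\mathcal A})$ are not known; this is the approximation-number analogue of the genuinely different (``Morrey-anomalous'') behaviour recorded in Theorem~\ref{ek-tau}(i), and I do not expect the embedding/factorization technique to reach it. A secondary technical care is that, unlike the additive index relation \eqref{e-multi}, the ideal property tolerates only one non-trivial factor per estimate, so in each sandwich the two flanking maps must be arranged as norm-bounded embeddings while the single Morrey (or classical) embedding carries the decay — which the chains invoked above are designed to guarantee.
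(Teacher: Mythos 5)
Your proposal is correct and follows essentially the same route as the paper: identify the source/target with classical Besov spaces via Proposition~\ref{yy02} or with Morrey spaces via \eqref{fte}, sandwich $\id_\tau$ using the chains \eqref{tau2small-1}--\eqref{tau2small-4} together with the ideal property of approximation numbers, and invoke Proposition~\ref{ak_id_A} (whose hypothesis $p_1\ge u_2$ is exactly your condition $\tau_2\le\frac{1}{p_2}-\frac{1}{p_1}$, and is automatic in case (i)) respectively the classical formula \eqref{ak-id_A}, with the limiting sub-cases $\tau_i=\frac{1}{p_i}$, $q_i<\infty$ handled by $q$-independence of the outer asymptotics. Your diagnosis of why the statement is only partial also matches the remark following the paper's proof.
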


\begin{proof}
  {\em Step 1}.~ We begin with case (i) and assume that $\tau_1\geq \frac{1}{p_1}$, with $q_1=\infty$ when $\tau_1=\frac{1}{p_1}$. In case of $\tau_2\geq \frac{1}{p_2}$ with $q_2=\infty$ when $\tau_2=\frac{1}{p_2}$, then we use classical Besov space results,
  \begin{align*}
    a_k\left(\id_\tau:\ate(\Omega)\hookrightarrow \atz(\Omega)\right) \sim & \ a_k\left(\id_B: B^{s_1+d(\tau_1-\frac{1}{p_1})}_{\infty,\infty}(\Omega) \hookrightarrow B^{s_2+d(\tau_2-\frac{1}{p_2})}_{\infty,\infty}(\Omega)\right) \\
    \sim &\ k^{-\frac{s_1-s_2}{d} - (\tau_1-\frac{1}{p_1}) + (\tau_2-\frac{1}{p_2})}, \quad k \in \nn,
  \end{align*}
in view of Proposition~\ref{yy02} and the corresponding approximation number result  \eqref{ak-id_A}. This proves \eqref{ak-2} in this case.

If $0\leq \tau_2<\frac{1}{p_2}$, then we proceed similar to Substep~ 2.2 of the proof of Theorem~\ref{ek-tau} and make use of the chains of embeddings \eqref{tau2small-3} and \eqref{tau2small-4}. Therefore, we get
\[
a_k\left(\id_{\mathcal{N}}: \mathcal{N}^{s_1+d(\tau_1-\frac{1}{p_1})}_{\infty,\infty,\infty}(\Omega)\hookrightarrow \mathcal{N}^{s_2}_{u_2, p_2, \infty}(\Omega)\right) \ \lesssim \ a_k(\id_\tau) \ \lesssim \ a_k\left(\id: \mathcal{N}^{s_1+d(\tau_1-\frac{1}{p_1})}_{\infty,\infty,\infty}(\Omega)\hookrightarrow \mathcal{A}^{s_2}_{u_2, p_2, q_2}(\Omega)\right), 
\]
which, in view of Proposition~\ref{ak_id_A}, leads to 
\[
a_k(\id_\tau) \ \sim \ k^{-\frac{s_1-s_2}{d}-(\tau_1-\frac{1}{p_1})}, \quad k\in\nn. 
\]

As for the case $\tau_2=\frac{1}{p_2}$, $q_2<\infty$, we can follow Substep~3.4 of the proof of Theorem~\ref{ek-tau} and apply Proposition~\ref{ak_id_A} instead of Theorem~\ref{ek-NE}.\\

{\em Step 2}.\quad Assume that $\tau_1=\frac{1}{p_1}$ with $q_1<\infty$. Again we check our above proof of Theorem~\ref{ek-tau} and find that Substep~3.1 with Proposition~\ref{ak_id_A} cover the case $0<\tau_2<\frac{1}{p_2}$, while Substep~3.2 of that proof is related to the case $\tau_2\geq\frac{1}{p_2}$ with $q_2=\infty$ if $\tau_2=\frac{1}{p_2}$, and Substep~3.5 concerns the situation when   $\tau_2=\frac{1}{p_2}$ with $q_2<\infty$. In the latter two cases we benefit from  \eqref{ak-id_A}.\\

{\em Step 3}.~ We deal with (ii). In this case, we can apply \eqref{fte} and Proposition~\ref{ak_id_A} to obtain the result for $\at=\ft$. Otherwise we argue similarly as in Step~1 and, using \eqref{tau2small-1} and \eqref{tau2small-2} this time, we get
\[
a_k\left(\id_{\mathcal{N}}: \mathcal{N}^{s_1}_{u_1,p_1,q_1}(\Omega)\hookrightarrow \mathcal{N}^{s_2}_{u_2,p_2,\infty}(\Omega)\right)\ \lesssim \ a_k(\id_\tau)\ \lesssim \ a_k\left(\id_{\mathcal{N}}: \mathcal{N}^{s_1}_{u_1,p_1,\infty}(\Omega) \hookrightarrow \mathcal{N}^{s_2}_{u_2,p_2,q_2}(\Omega)\right),
\]
where $\frac{1}{u_i}=\frac{1}{p_i}-\tau_i$, $i=1,2$, as usual. Now we apply Proposition~\ref{ak_id_A} (with $\mathcal{A}=\mathcal{N}$), under the additional assumptions made in (ii), and benefit again from the independence of $q_i$.
\end{proof}

\begin{remark}
It is obvious from the above proof, in particular in Steps~1 and 2, that we could follow all the arguments of the proof of Theorem~\ref{ek-tau} to deal with the remaining cases. However, in view of the additional restriction \eqref{ak-3} in Proposition~\ref{ak_id_A} and the observation of $p$-dependence in \eqref{ak-id_A} this leads to partial one-sided results only.
\end{remark}

\begin{remark}
  Note that one could reformulate Corollary~\ref{ak-tau} in terms of the hybrid spaces $L^r\A(\Omega)$  in the spirit of Remark~\ref{ek-tau}. We leave it to the reader.
  \end{remark}

Finally we conclude a few special cases from Corollary~\ref{ak-tau} and start with the case $\tau_1=\tau_2=\tau$.

\begin{corollary}  \label{ak-tau-tau}
  Let  $s_i\in \real$, $0<q_i\leq\infty$, $0<p_i\leq \infty$ (with $p_i<\infty$ in case of $A=F$),  $i=1,2$, and $\tau\geq 0$. Assume that \eqref{same-tau} is satisfied and, in addition,
\bli
\item[{\hfill\bfseries\upshape (i)\hfill}]
either $\ \tau\geq \frac{1}{p_1}$,
\item[{\hfill\bfseries\upshape (ii)\hfill}]
or $\ \tau<\min\left\{\frac{1}{p_1},\frac{1}{p_2}\right\}$, with $\tau\leq \frac{1}{p_2}-\frac{1}{p_1}$.
  \eli
  Then we obtain for the approximation numbers of the compact embedding
\begin{equation*}
	 \id_{\tau}: A^{s_1,\tau}_{p_1,q_1}(\Omega )\hookrightarrow A^{s_2,\tau}_{p_2,q_2}(\Omega )
\end{equation*}
that
\begin{equation}\label{ak-4}
  a_k(\id_\tau) \ \sim \ \begin{cases}
  k^{-\frac{s_1-s_2}{d} +\frac{1}{p_1}-\frac{1}{p_2}}, & \text{if}\quad \tau\geq \max\left\{\frac{1}{p_1},\frac{1}{p_2}\right\}, \\[1ex]
    k^{-\frac{s_1-s_2}{d} - \tau +\frac{1}{p_1}},& \text{if}\quad \frac{1}{p_2}>\tau\geq \frac{1}{p_1},\\[1ex]
    k^{-\frac{s_1-s_2}{d}}, & \text{if}\quad \tau<  \min\left\{\frac{1}{p_1},\frac{1}{p_2}\right\}\quad\text{and}\quad \tau\leq \frac{1}{p_2}-\frac{1}{p_1}.
\end{cases}
  \end{equation}
\end{corollary}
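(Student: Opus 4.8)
The plan is to obtain the whole statement as a direct specialisation of Corollary~\ref{ak-tau} to $\tau_1=\tau_2=\tau$, so that the only real work is to align the hypotheses and then evaluate the exponent in \eqref{ak-2} regime by regime. The first step I would take is to observe that the standing compactness assumption, condition \eqref{same-tau}, is by construction nothing but $\frac{s_1-s_2}{d}>\critical$ read off for $\tau_1=\tau_2=\tau$ (this is exactly how Corollary~\ref{comp-tau-eq} was derived from Theorem~\ref{comp-tau-u}). Hence \eqref{same-tau} already supplies the compactness hypothesis $s_1-s_2>d\,\critical$ needed to invoke Corollary~\ref{ak-tau}, and what remains is to verify that the additional hypotheses (i)/(ii) here match (i)/(ii) there.

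The matching of hypothesis~(i) is immediate: $\tau\geq\frac1{p_1}$ is precisely Corollary~\ref{ak-tau}(i) with $\tau_1=\tau$. For hypothesis~(ii) I would first note that $\tau\geq 0$ together with $\tau\leq\frac1{p_2}-\frac1{p_1}$ forces $\frac1{p_2}\geq\frac1{p_1}$, i.e. $p_1\geq p_2$, while $\tau<\min\{\frac1{p_1},\frac1{p_2}\}$ gives $\tau_i<\frac1{p_i}$ for $i=1,2$; thus the structural part of Corollary~\ref{ak-tau}(ii) holds. The one point that still needs checking is the extra requirement $s_1>s_2$ appearing in Corollary~\ref{ak-tau}(ii). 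Here I would compute $\critical$ in this regime: the first entry of \eqref{gamma} vanishes because both $(\tau-\frac1{p_i})_+$ are zero, and the second entry equals $\left(\frac1{p_1}-\frac1{p_2}\right)(1-p_1\tau)$, which is $\leq 0$ since $p_1\geq p_2$ and $p_1\tau<1$. Therefore $\critical=0$ and \eqref{same-tau} collapses to $s_1>s_2$, which is exactly the hypothesis required to apply Corollary~\ref{ak-tau}(ii).

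Once the hypotheses are in place, Corollary~\ref{ak-tau} gives $a_k(\id_\tau)\sim k^{-\frac{s_1-s_2}{d}-(\tau-\frac1{p_1})_+ +(\tau-\frac1{p_2})_+}$, and the three lines of \eqref{ak-4} drop out by evaluating the two positive parts: for $\tau\geq\max\{\frac1{p_1},\frac1{p_2}\}$ both are active, yielding $-\frac{s_1-s_2}{d}+\frac1{p_1}-\frac1{p_2}$; for $\frac1{p_2}>\tau\geq\frac1{p_1}$ only the first survives, yielding $-\frac{s_1-s_2}{d}-\tau+\frac1{p_1}$; and for $\tau<\min\{\frac1{p_1},\frac1{p_2}\}$ both vanish, yielding $-\frac{s_1-s_2}{d}$. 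I would close by remarking that hypotheses (i) and (ii) exactly tile these three regimes, since (i) splits into the first two lines according to whether $\tau\geq\frac1{p_2}$ or $\tau<\frac1{p_2}$, while (ii) produces the third. The argument is essentially bookkeeping; the only genuine (if minor) obstacle is the sign computation showing $\critical=0$ in regime~(ii), since that is what lets the compactness condition reduce to $s_1>s_2$ and thereby unlock Corollary~\ref{ak-tau}(ii).
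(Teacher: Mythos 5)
Your proposal is correct and follows exactly the route the paper intends: Corollary~\ref{ak-tau-tau} is stated there as a direct specialisation of Corollary~\ref{ak-tau} to $\tau_1=\tau_2=\tau$, with no separate proof given. Your verification that $\critical=0$ in regime (ii) (so that \eqref{same-tau} collapses to $s_1>s_2$, unlocking hypothesis (ii) of Corollary~\ref{ak-tau}) and your case-by-case evaluation of the exponent $-(\tau-\frac{1}{p_1})_+ +(\tau-\frac{1}{p_2})_+$ are both accurate and supply precisely the bookkeeping the paper leaves implicit.
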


Next we consider the special target spaces $L_r(\Omega)$, $1\leq r\leq\infty$, and $\bmo(\Omega)$.

\begin{corollary}\label{ak-atau-Linf}
  Let $s\in\real$, $0< p,q\leq\infty$ (with $p<\infty$ if $A=F$), and assume that $\tau\geq \frac1p$ and
  $s> d(\frac1p-\tau)$. Then the approximation numbers of the compact embeddings $\id: \at(\Omega) \hookrightarrow L_{\infty}(\Omega)$ and $\id: \at(\Omega )\hookrightarrow \bmo(\Omega)$ behave like
\[
a_k\left(\id:\at(\Omega) \hookrightarrow L_{\infty}(\Omega)\right)\sim a_k\left(\id: \at(\Omega )\hookrightarrow \bmo(\Omega)\right) \sim k^{-\frac{s}{d}-\tau+\frac1p},\quad k\in\nn.
\]
\end{corollary}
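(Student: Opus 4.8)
The plan is to read off both statements from Corollary~\ref{ak-tau}, since the hypothesis $\tau\geq\frac1p$ means that in every substitution below we have $\tau_1=\tau\geq\frac1{p_1}$, so only part~(i) of that corollary is relevant and part~(ii) never enters. For each admissible target I will choose the parameters so that $\tau_2=\frac1{p_2}$ (either with $q_2$ finite or infinite) or $p_2=\infty$, $\tau_2=0$; in all these cases the reformulation \eqref{reform} shows that the right-hand side of the compactness condition collapses to $\frac1p-\tau$, so that the assumption $s>d(\frac1p-\tau)$ is precisely the compactness condition required by Corollary~\ref{ak-tau}. The rate \eqref{ak-2} then reads $k^{-\frac sd-(\tau-\frac1p)_++(\tau_2-\frac1{p_2})_+}$; since $\tau\geq\frac1p$ gives $(\tau-\frac1p)_+=\tau-\frac1p$ and each chosen target satisfies $(\tau_2-\frac1{p_2})_+=0$, this is exactly the claimed $k^{-\frac sd-\tau+\frac1p}$.

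For the target $\bmo(\Omega)$ I would exploit the two representations of this single space furnished by \eqref{ft=bmo} and \eqref{ftbt}, namely $\bmo(\Omega)=F^{0,1/p}_{p,2}(\Omega)=B^{0,1/2}_{2,2}(\Omega)$. In the case $A=F$ I apply Corollary~\ref{ak-tau}(i) to $\id_\tau:\ft(\Omega)\hookrightarrow F^{0,1/p}_{p,2}(\Omega)$, i.e. with $s_1=s$, $s_2=0$, $p_1=p_2=p$, $\tau_1=\tau$, $\tau_2=\frac1p$, $q_2=2<\infty$; in the case $A=B$ I apply it to $\id_\tau:\bt(\Omega)\hookrightarrow B^{0,1/2}_{2,2}(\Omega)$, i.e. with $p_2=2$, $\tau_2=\frac12$, $q_2=2<\infty$. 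Both substitutions lie in the limiting subcase $\tau_2=\frac1{p_2}$, $q_2<\infty$ of part~(i), both have $\tau_2-\frac1{p_2}=0$, and both therefore give $a_k\sim k^{-\frac sd-\tau+\frac1p}$.

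For the target $L_\infty(\Omega)$ I would combine the classical Besov sandwich $B^0_{\infty,1}(\Omega)\hookrightarrow L_\infty(\Omega)\hookrightarrow B^0_{\infty,\infty}(\Omega)$ with the ideal property of approximation numbers to obtain
\[
a_k\!\left(\id:\at(\Omega)\hookrightarrow B^0_{\infty,\infty}(\Omega)\right)\ \lesssim\ a_k\!\left(\id:\at(\Omega)\hookrightarrow L_\infty(\Omega)\right)\ \lesssim\ a_k\!\left(\id:\at(\Omega)\hookrightarrow B^0_{\infty,1}(\Omega)\right).
\]
The two outer targets are Besov spaces with $s_2=0$, $p_2=\infty$, $\tau_2=0$, hence again $\tau_2=\frac1{p_2}$ and $\tau_2-\frac1{p_2}=0$. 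When $A=B$ the source and target types already match and Corollary~\ref{ak-tau}(i) applies to both corners, giving the common rate $k^{-\frac sd-\tau+\frac1p}$ (the values $q_2=1$ and $q_2=\infty$ being irrelevant to the exponent). When $A=F$ I additionally insert the elementary embeddings \eqref{elem-tau}, $B^{s,\tau}_{p,\min\{p,q\}}(\Omega)\hookrightarrow\ft(\Omega)\hookrightarrow B^{s,\tau}_{p,\max\{p,q\}}(\Omega)$, to reduce the estimates to the $B$-source corners $a_k(B^{s,\tau}_{p,\max\{p,q\}}(\Omega)\hookrightarrow B^0_{\infty,1}(\Omega))$ from above and $a_k(B^{s,\tau}_{p,\min\{p,q\}}(\Omega)\hookrightarrow B^0_{\infty,\infty}(\Omega))$ from below; since the exponent in \eqref{ak-2} is independent of the fine parameter $q_1$, both corners again equal $k^{-\frac sd-\tau+\frac1p}$ and the two bounds meet.

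The argument is essentially bookkeeping, and the only delicate point is the matching of the $B$- and $F$-types of source and target, because Corollary~\ref{ak-tau} fixes $A\in\{B,F\}$ throughout and cannot be invoked across the two scales. This is exactly why I rely on the double description $\bmo=F^{0,1/p}_{p,2}=B^{0,1/2}_{2,2}$ for the $\bmo$-target and on the double sandwich (target by $B^0_{\infty,1}\hookrightarrow L_\infty\hookrightarrow B^0_{\infty,\infty}$, source by \eqref{elem-tau}) for the $L_\infty$-target; the independence of the approximation-number asymptotics from the $q$-parameters is precisely what makes every such sandwich collapse to the single rate $k^{-\frac sd-\tau+\frac1p}$.
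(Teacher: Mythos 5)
Your proposal is correct and matches the paper's (largely implicit) argument: the paper derives this corollary from Corollary~\ref{ak-tau} exactly as it derives the entropy-number analogues, using the identifications $\bmo(\Omega)=F^{0,1/p}_{p,2}(\Omega)=B^{0,1/2}_{2,2}(\Omega)$ for the $\bmo$-target and the sandwich $B^0_{\infty,1}(\Omega)\hookrightarrow L_\infty(\Omega)\hookrightarrow B^0_{\infty,\infty}(\Omega)$ (combined with \eqref{elem-tau} to pass between the $B$- and $F$-scales) for the $L_\infty$-target. Your bookkeeping of the parameters — verifying that $\critical=\frac1p-\tau$ in each substitution and that only part~(i) of Corollary~\ref{ak-tau} is needed because $\tau\geq\frac1p$ — is accurate.
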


\begin{remark}
In our paper \cite{YHMSY} we obtained already that
\[
a_k\left(\id:\at(\Omega) \hookrightarrow L_{\infty}(\Omega)\right)\sim  k^{-\frac{s}{d}-\tau+\frac1p},\quad k\in\nn,
\]
if $0<p,q\leq\infty$ (with $p<\infty$ if $A=F$), $(\frac1p-\frac12)_+\leq \tau<\frac1p$, and  $d(\frac1p-\tau)<s<d(\frac1p-\tau)+1$ (please note the misprints in \cite[Corollary~5.10]{YHMSY}). So the above result can be seen as some partial extension to the case when $\tau\geq \frac1p$.
\end{remark}

The partial counterpart of Corollary~\ref{ek-atau-Lr} reads as follows.

\begin{corollary}\label{ak-atau-Lr}
 Let $s\in\real$, $0< p,q\leq\infty$ (with $p<\infty$ if $A=F$), $1\leq r<\infty$, $\tau\geq 0$. Assume that
 \eqref{lim-12} is satisfied, and, in addition, that
\bli
\item[{\hfill\bfseries\upshape (i)\hfill}]
either $\ \tau\geq \frac{1}{p}$,
\item[{\hfill\bfseries\upshape (ii)\hfill}]
or $\ \tau<\frac1p\leq \frac1r$.
  \eli
  Then we obtain that
\[
a_k(\id:\at(\Omega)\hookrightarrow L_r(\Omega)) \ \sim \ k^{-\frac{s}{d} - (\tau-\frac{1}{p})_+ },\quad k\in\nn.
\]
\end{corollary}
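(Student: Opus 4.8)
The plan is to mirror the proof of the entropy-number analogue, Corollary~\ref{ek-atau-Lr}, replacing Theorem~\ref{ek-tau} by its approximation-number counterpart Corollary~\ref{ak-tau}. The device is to trap the target $L_r(\Omega)$ between two Besov spaces via the elementary embeddings $B^0_{r,1}(\Omega)\hookrightarrow L_r(\Omega)\hookrightarrow B^0_{r,\infty}(\Omega)$, valid for $1\leq r<\infty$, and to recall $B^0_{r,v}(\Omega)=B^{0,0}_{r,v}(\Omega)$, so that these are spaces of type $\at$ with $\tau_2=0$, $p_2=r$. Since approximation numbers are $s$-numbers and hence enjoy the ideal property $a_k(ST)\leq\|S\|\,a_k(T)$ and $a_k(ST)\leq a_k(S)\,\|T\|$, factorising $\at(\Omega)\to B^{0,0}_{r,1}(\Omega)\to L_r(\Omega)$ yields the upper estimate $a_k(\id)\lesssim a_k(\id_\tau:\at(\Omega)\to B^{0,0}_{r,1}(\Omega))$, while factorising $\at(\Omega)\to L_r(\Omega)\to B^{0,0}_{r,\infty}(\Omega)$ yields the matching lower estimate in terms of $B^{0,0}_{r,\infty}(\Omega)$.

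First I would treat the Besov source $A=B$ and apply Corollary~\ref{ak-tau} with $s_1=s$, $s_2=0$, $p_1=p$, $p_2=r$, $\tau_1=\tau$, $\tau_2=0$, $q_1=q$, and $q_2=1$ (upper bound) or $q_2=\infty$ (lower bound). The crucial point is to check that its hypotheses hold. Its compactness requirement $s_1-s_2>d\,\critical$ reduces, through the reformulation \eqref{reform} with $\tau_2=0<\tfrac1r$, to exactly \eqref{lim-12}: if $\tau\geq\tfrac1p$ then $\critical=\tfrac1p-\tau$, while if $\tau<\tfrac1p$ then $\critical=(\tfrac1p-\tau)(1-\tfrac pr)_+$. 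Its structural hypothesis splits accordingly: assumption (i) here, $\tau\geq\tfrac1p$, is precisely assumption (i) there, $\tau_1\geq\tfrac1{p_1}$; and assumption (ii) here, $\tau<\tfrac1p\leq\tfrac1r$, gives $\tau_i<\tfrac1{p_i}$ for $i=1,2$ together with $\tau_2=0\leq\tfrac1r-\tfrac1p=\tfrac1{p_2}-\tfrac1{p_1}$, the remaining requirement of Corollary~\ref{ak-tau}(ii). Moreover $\tfrac1p\leq\tfrac1r$ forces $(1-\tfrac pr)_+=0$, so \eqref{lim-12} becomes $s>0$, i.e. $s_1>s_2$, as also demanded in case (ii). Feeding these parameters into the conclusion of Corollary~\ref{ak-tau} gives the exponent
\[
-\frac{s}{d}-\Bigl(\tau-\tfrac1p\Bigr)_+ +\Bigl(0-\tfrac1r\Bigr)_+ \;=\; -\frac{s}{d}-\Bigl(\tau-\tfrac1p\Bigr)_+,
\]
since $\tfrac1r>0$ kills the last term; as the exponent is independent of the fine indices, the choices $q_2=1$ and $q_2=\infty$ produce the same rate and the two bounds match.

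To pass to the Triebel--Lizorkin source $A=F$, I would not invoke Corollary~\ref{ak-tau} directly, since it compares spaces of one and the same scale. Instead I first sandwich $\ft(\Omega)$ between Besov-type spaces through \eqref{elem-tau},
\[
B^{s,\tau}_{p,\min\{p,q\}}(\Omega)\hookrightarrow \ft(\Omega)\hookrightarrow B^{s,\tau}_{p,\max\{p,q\}}(\Omega),
\]
and compose with the $L_r$-sandwich above. The $s$-number ideal property then bounds $a_k(\ft(\Omega)\to L_r(\Omega))$ from above by $a_k(B^{s,\tau}_{p,\max\{p,q\}}(\Omega)\to B^{0,0}_{r,1}(\Omega))$ and from below by $a_k(B^{s,\tau}_{p,\min\{p,q\}}(\Omega)\to B^{0,0}_{r,\infty}(\Omega))$; both are Besov-to-Besov embeddings already handled, and both deliver the same rate because it does not see the fine indices $q$. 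This reduces the $F$-case to the $B$-case and closes the argument.

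The only point requiring genuine care, rather than a real obstacle, is the verification that the structural assumption $\tau_2\leq\tfrac1{p_2}-\tfrac1{p_1}$ of Corollary~\ref{ak-tau}(ii) translates here into exactly $\tfrac1p\leq\tfrac1r$ (equivalently $r\leq p$, i.e. $u_2=r\leq p=p_1$, the restriction $p_1\geq u_2$ inherited from Proposition~\ref{ak_id_A}). This is precisely why the present corollary carries the asymmetric hypothesis $\tau<\tfrac1p\leq\tfrac1r$ in case (ii), and why, unlike the entropy statement Corollary~\ref{ek-atau-Lr}, the outcome here is only a partial counterpart.
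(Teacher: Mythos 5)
Your proposal is correct and follows exactly the route the paper intends: the paper gives no written proof for this corollary, but (in parallel with Corollary~\ref{ek-atau-Lr}) it is meant to follow by sandwiching $L_r(\Omega)$ between $B^0_{r,1}(\Omega)$ and $B^0_{r,\infty}(\Omega)$ and applying Corollary~\ref{ak-tau} with $s_1=s$, $s_2=0$, $p_1=p$, $p_2=r$, $\tau_1=\tau$, $\tau_2=0$, which is precisely your argument, including the correct translation of the hypotheses (i) and (ii) and the reduction of the $F$-case via \eqref{elem-tau}.
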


\begin{remark}
  Note that Corollaries~\ref{ak-atau-Linf} and \ref{ak-atau-Lr} deal with situations where entropy and approximation numbers show the same asymptotic behaviour, cf. Corollaries~\ref{ek-atau-Linf} and \ref{ek-atau-Lr}, which is in general not the case.
\end {remark}


{\small

}
\vspace{1cm}
\noindent
Helena F.~Gon\c{c}alves\\
Institute of Mathematics, Friedrich-Schiller-University Jena, 07737 Jena, Germany\\ E-mail: helena.goncalves@uni-jena.de \\\\
\noindent
Dorothee D.~Haroske \\
Institute of Mathematics, Friedrich-Schiller-University Jena, 07737 Jena, Germany\\
E-mail: dorothee.haroske@uni-jena.de \\\\
\noindent
Leszek Skrzypczak \\
Faculty of Mathematics and Computer Science, Adam Mickiewicz University,\\ ul. Uniwersytetu Pozna\'nskiego 4, 61-614 Poznan, Poland \\
E-mail: lskrzyp@amu.edu.pl


\begin{thebibliography}{99}

\bibitem{Bai-Si}
Z.~Baituyakova and W.~Sickel,
{Strong summability of {F}ourier series and generalized {M}orrey
  spaces.}
{ Anal. Math.}, 43(3):371--414 (2017).


%
\bibitem{Cae}
A.~Caetano, {About approximation numbers in function spaces}.
{J. Approx. Theory}, 94 (1998), 383--395.
%
%

\bibitem{CS}
B.~Carl and I.~Stephani,
\newblock {Entropy, compactness and the approximation of operators}.
\newblock Cambridge Univ. Press, Cambridge, 1990.



\bibitem{EE}
D.E. Edmunds and W.D. Evans,
\newblock {Spectral Theory and Differential Operators}.
\newblock Clarendon Press, Oxford, 1987.

%
\bibitem{ET1}
D.E. Edmunds and H.~Triebel,
\newblock {Entropy numbers and approximation numbers in function spaces. }
\newblock Proc. London Math. Soc., 58(3):137--152 (1989).


\bibitem{ET2}
D.E. Edmunds and H.~Triebel,
\newblock {Entropy numbers and approximation numbers in function spaces {II}.}
\newblock Proc. London Math. Soc., 64(3):153--169 (1992).

\bibitem{ET}
D.E. Edmunds and H.~Triebel,
\newblock {Function Spaces, Entropy Numbers, Differential Operators}.
\newblock Cambridge Univ. Press, Cambridge, 1996.


%

\bibitem{ElBaraka1}
A. El Baraka,
 An embedding theorem for  Campanato spaces.
{ Electron. J. Differential Equations} 66 (2002), 1--17.


\bibitem{ElBaraka2}
A. El Baraka,
 Function spaces of BMO and Campanato type.
  {Proc. of the 2002 Fez Conference on Partial Differential Equations}, 109--115 (electronic), Electron. J. Differ. Equ. Conf. 9, Southwest Texas State Univ., San Marcos, TX (2002).

\bibitem{ElBaraka3}
A. El Baraka,
Littlewood-Paley characterization for Campanato spaces. 
{J. Funct. Spaces Appl.} 4 (2006), 193--220.



%
%
%






%
%
%

%


\bibitem{HMSS}
D.D. Haroske, S.D. Moura, C.~Schneider and L.~Skrzypczak, 
\newblock {Unboundedness properties of Smoothness Morrey spaces of regular
  distributions on domains}.
\newblock {Sci. China Math.}, 60(12):2349--2376 (2017).

\bibitem{hms}
D.D. Haroske, S.D. Moura and L. Skrzypczak,
{Smoothness Morrey Spaces of regular distributions, and some unboundedness properties.}
Nonlinear Analysis Series A: Theory, Methods and  Applications,  139 (2016), 218--244.

\bibitem{hs12}
D. D. Haroske and L. Skrzypczak,
Continuous embeddings of Besov-Morrey function spaces,
Acta Math. Sin. (Engl. Ser.) 28 (2012), 1307--1328.


\bibitem{hs12b}
D. D. Haroske and L. Skrzypczak,  Embeddings of Besov-Morrey spaces on
bounded domains,
{Studia Math.} 218 (2013), 119--144.



\bibitem{hs14}
D.D. Haroske and L.~Skrzypczak,
{\em On Sobolev and Franke-Jawerth embeddings of smoothness Morrey
  spaces.} {Rev. Mat. Complut.} 27 (2014).
, 541--573

\bibitem{HaSk-krakow}
D.D. Haroske and L.~Skrzypczak.
\newblock {Some quantitative result on compact embeddings in smoothness Morrey
  spaces on bounded domains; an approach via interpolation}.
\newblock In {\em Function spaces XII}, volume 119 of {\em Banach Center
  Publ.}, pages 181--191, Warsaw, 2019. Polish Acad. Sci., Warsaw.

\bibitem{HaSk-morrey-comp}
D.D. Haroske and L.~Skrzypczak,
\newblock  Entropy numbers of compact embeddings of Smoothness Morrey spaces on bounded domains.
\newblock submitted; arXiv:1902.04945.


%

\bibitem{Koe}
{H.~K\"onig}, {Eigenvalue distribution of compact operators}.
Birkh\"auser, Basel, 1986.

\bibitem{KY}
H.~Kozono and M.~Yamazaki,
Semilinear heat equations and the {N}avier-{S}tokes equation with
  distributions in new function spaces as initial data,
{Comm. Partial Differential Equations} 19 (1994), 959--1014.



%
\bibitem{lxy} P. Li, J. Xiao and Q. Yang,
Global mild solutions to modified Navier-Stokes equations with small initial data in critical Besov-$Q$ spaces.
{Electron. J. Differential Equations}  2014, No. 185, 37 pp.


\bibitem{Maz}
A. L.~Mazzucato,
Besov-{M}orrey spaces: function space theory and applications to
  non-linear {PDE}.
{Trans. Amer. Math. Soc.} 355 (2003), 1297--1364.

%

\bibitem{Mor}
C.~B. {Morrey,}
On the solutions of quasi-linear elliptic partial differential
  equations.
{Trans. Amer. Math. Soc.} 43 (1938), 126--166.

%
%
%
\bibitem{Pee}
J.~Peetre,
On the theory of {${\cal L}_{p,\lambda }$} spaces.
{J. Funct. Anal.} 4 (1969), 71--87.

\bibitem{Pia}
A.~Pietsch,
\newblock {Operator Ideals}.
\newblock North-Holland, Amsterdam, 1980.

\bibitem{Pie-s}
A.~Pietsch,
\newblock {Eigenvalues and $s$-Numbers}.
\newblock Akad. Verlagsgesellschaft Geest \& Portig, Leipzig, 1987.

\bibitem{MR-1}
M.~Rosenthal,
Local means, wavelet bases, representations, and isomorphisms in Besov-Morrey and Triebel-Lizorkin-Morrey spaces.
{Math. Nachr.} 286 (2013), 59--87.



%

\bibitem{Saw2}
Y.~Sawano,
Wavelet characterizations of {B}esov-{M}orrey and
  {T}riebel-{L}izorkin-{M}orrey spaces.
{Funct. Approx. Comment. Math.} 38 (2008), 93--107.

\bibitem{Saw1}
Y.~Sawano,
A note on {B}esov-{M}orrey spaces
and {T}riebel-{L}izorkin-{M}orrey
spaces.
{Acta Math. Sin. (Engl. Ser.)} 25 (2009), 1223--1242.

%
%
%

\bibitem{ST2}
Y.~Sawano and H.~Tanaka,
Decompositions of {B}esov-{M}orrey spaces and
  {T}riebel-{L}izorkin-{M}orrey spaces.
{Math. Z.} 257 (2007), 871--905.

\bibitem{ST1}
Y.~Sawano and H.~Tanaka,
Besov-{M}orrey spaces and {T}riebel-{L}izorkin-{M}orrey spaces for
non-doubling measures.
  {Math. Nachr.} 282 (2009), 1788--1810.

\bibitem{Saw2010}
Y.~Sawano,
Besov-Morrey spaces and Triebel-Lizorkin-Morrey spaces on domains.
Math. Nachr. 283 (2010), no.~10,  1456--1487.



\bibitem{s011}
W.~Sickel, Smoothness spaces related to Morrey spaces -- a survey. I, Eurasian Math. J. 3 (2012), 110--149.

\bibitem{s011a}
W.~Sickel, Smoothness spaces related to Morrey spaces -- a survey. II, Eurasian Math. J. 4 (2013), 82--124.



\bibitem{TX}
L.~Tang and J.~Xu,
Some properties of {M}orrey type {B}esov-{T}riebel spaces.
{Math. Nachr.} 278 (2005), 904--917.

\bibitem{T-F1}
H.~Triebel,  Theory of Function Spaces.
Birkh\"auser, Basel, 1983.

\bibitem{T-F2}
H.~Triebel,
{Theory of Function Spaces. {II}},
Birkh\"auser, Basel, 1992.



\bibitem{t06} H. Triebel, Theory of Function Spaces. III,
Birkh\"auser, Basel, 2006.

\bibitem{t13} H. Triebel, Local Function Spaces,
Heat and Navier-Stokes Equations. EMS Tracts in Mathematics 20,
European Mathematical Society (EMS), Z\"urich, 2013.

\bibitem{t14} H. Triebel, Hybrid Function Spaces,
Heat and Navier-Stokes Equations. EMS Tracts in Mathematics 24,
European Mathematical Society (EMS), Z\"urich, 2015.



%




\bibitem{yy1}
D. Yang and W. Yuan, {A new class of function spaces connecting Triebel--Lizorkin spaces and Q spaces}. J. Funct. Anal. 255 (2008), 2760--2809.

\bibitem{yy2}
D. Yang and W. Yuan, {New Besov-type spaces and Triebel--Lizorkin-type spaces including Q spaces}. Math. Z. 265 (2010), 451--480.

\bibitem{yy02}
D. Yang and W. Yuan,
Relations among Besov-type spaces, Triebel-Lizorkin-type spaces and generalized
Carleson measure spaces.  Appl. Anal. 92 (2013), 549--561.

\bibitem{YHMSY}
W.~Yuan, D. D. Haroske, S. D. Moura, L.~Skrzypczak and D.~Yang,
Limiting embeddings in smoothness Morrey spaces, continuity envelopes and applications.
{J. Approx. Theory}, 192 (2015), 306--335.

\bibitem{YHSY}
W.~Yuan, D. D. Haroske, L.~Skrzypczak and D.~Yang,
Embedding properties of {B}esov-type spaces.
Appl. Anal. 94 (2015), no.~2,  318--340.


\bibitem{ysy} W. Yuan, W. Sickel and D. Yang,
Morrey and Campanato Meet Besov, Lizorkin and Triebel. Lecture Notes
in Mathematics 2005, Springer-Verlag, Berlin, 2010, xi+281 pp.


\bibitem{ysy2}
W. Yuan, W. Sickel and D. Yang,
On the coincidence of certain approaches to smoothness spaces related to Morrey spaces.
Math. Nachr. 286 (2013), 1571--1584.

\end{thebibliography}
\end{document}